\newtheorem{theorem}{Theorem}[section]
\newtheorem{corollary}[theorem]{Corollary}
\newtheorem{definition}[theorem]{Definition}
\newtheorem{lemma}[theorem]{Lemma}
\newtheorem{question}[theorem]{Question}
\newtheorem{proposition}[theorem]{Proposition}
\newtheorem{remark}[theorem]{Remark}
\newenvironment{proof}[1][Proof]{\noindent\textbf{#1.} }{\ \rule{0.5em}{0.5em}}
\begin{document}
\title{Geodesic and curvature of piecewise flat Finsler surfaces\thanks{Supported by NSFC (no. 11271198, 51535008) of China}}

\author{Ming Xu$^1$ and Shaoqiang Deng$^2$\thanks{Corresponding author} \\
\\
$^1$College of Mathematics\\
Tianjin Normal University\\
Tianjin 300387, P. R. China\\
Email: mgmgmgxu@163.com.\\
\\
$^2$School of Mathematical Sciences and LPMC\\
Nankai University\\
Tianjin 300071, P. R. China\\
E-mail: dengsq@nankai.edu.cn}

\date{}
\maketitle

\begin{abstract}
 A  piecewise flat Finsler metric  on  a triangulated surface $M$ is a metric whose restriction to  any triangle is a flat triangle in some Minkowski space with straight edges. One of the main
purposes of this work is to study the properties of geodesics on a piecewise flat Finsler surface, especially when it meets a vertex. Using the edge-crossing equation, we define two classes of piecewise flat Finsler surfaces, namely,   Landsberg type and Berwald type. We deduce an explicit condition for a geodesic to be extendable at a vertex,
 and define the curvature which measures the \textit{amount} of such extensions. The dependence of the curvature on an incoming or outgoing tangent direction corresponds to the feature of flag curvature in Finsler geometry. When the  piecewise flat Finsler surface is of Landsberg type, the curvature is only relevant to the vertex, and
we prove a combinatoric Gauss-Bonnet formula which generalizes both the Gauss-Bonnet formulas for  piecewise flat Riemannian manifolds  and for  smooth Landsberg surfaces.

\textbf{Mathematics Subject Classification (2010)}: 52B70; 53C60.

{\bf Key Words:}\quad piecewise flat Finsler surface; geodesic; curvature;  Landsberg condition; Berwald condition.

\end{abstract}
\section{Introduction}

A closed manifold $M$ is called a {\it piecewise flat Riemannian manifold} if it is given a triangulation and endowed with a metric, called a {\it piecewise flat metric}, such that all the simplices are Euclidean \cite{CMS1984}.
The systematical study on
the curvature of these spaces was started by T. Regge in 1961 \cite{Regge1961}, and thoroughly explored by J. Cheeger, W. M\"{u}ller and
R. Shrader in 1984 \cite{CMS1984}. The behavior of geodesics on a piecewise flat Riemannian manifold was studied by D. Stone in 1976 \cite{St1976}. In recent years, the study of such spaces  has become rather active due to its intriguing applications to computer programming for $3$-dimensional visualization and many other fields. In addition, there are many research works on the geometric analysis for piecewise flat manifolds; see for example
\cite{CL2003, DGL2008, Gli2005-1, Gli2005-2, Gli2011}.

Let us explain the  physical motivation to consider piecewise flat Riemannian manifold and Finsler spaces, especially
their curvatures. An interesting fact is that the scalar curvature and the mean curvature of such spaces enter into the Hilbert action principle from which  the Einstein field equations
can be derived, hence the theory has application to the general relativity. Motivated by this, Regge introduced the scalar curvature of piecewise flat Riemannian manifolds and dealt with the convergence,  but he did not give a rigorous proof. The theory is called Regge's calculus by physicists. From then on, many physicists employed Regge's calculus as a tool for constructing a quantum theory of gravity as well as for establishing some other physical theories; see for example \cite{WA, WO}. Regge's calculus was made rigorous  by Cheeger,  M\"{u}ller and
Shrader in  \cite{CMS1984}.
Since recently Finsler geometry has proven to be very useful in physics and other scientific fields, we believe that a generalization of
their theory to this more generalized case will definitely find many applications in physics as well as other fields.

The idea to define {\it piecewise flat
Finsler metrics} and {\it piecewise flat Finsler manifolds} is similar to that for piecewise flat Riemannian manifolds. In fact, given  a triangulation of a manifold, we can  define the metric such that each simplex can be isometrically identified with a flat simplex in some Minkowski space. Typical examples
are the boundaries of polyhedral regions in Minkowski spaces. Notice that there is an essential difference between piecewise flat Riemannian metrics and piecewise flat Finsler metrics, since
for each  dimension, up to affine equivalence, there is a unique Euclidean metric, but there are infinitely many different Minkowski metrics.
As far as we know, there has been very few research works considering piecewise flat Finsler manifolds, even for the $2$-dimensional case. Since Finsler geometry has proven to be very useful in various scientific fields, including general relativity, biology and medical imaging, we believe that the study of
piecewise flat Finsler spaces, which are not so smooth as the regular ones
having been considered extensively in the literature, will definitely  find interesting applications in the reality world.

The goal of this paper is to study in some depth the geometric properties of piecewise flat Finsler manifolds. For the sake of simplicity and clarity,  we concentrate on the $2$-dimensional case.  The main results of this work are some explicit description of  geodesics and curvature of piecewise flat Finsler surfaces.

The behavior of geodesics crossing an edge (or more generally a simplex of codimension $1$)
is very classical. In the literature, it is called the Snell-Descartes law, which is related
to the reflection and refraction of light rays. As this is the generic situation for geodesics, i.e., locally the initial direction of any geodesic can be perturbed to avoid
passing a vertex, the behavior of geodesics crossing an edge provides the foundation for our further study. Using the edge-crossing equation for
geodesics, we define two special classes of piecewise flat Finsler surfaces,
namely, piecewise flat {\it Landsberg} surfaces and piecewise flat {\it Berwald} surfaces. Among these two concepts,
the Berwald condition is stronger, which implies the other. We will show that a piecewise flat Landsberg surface shares many important properties with a smooth Landsberg surface. For example, when $M$ is connected and $x$ is not a vertex, the length of indicatrix circle $\mathcal{S}_x^\pm M\subset T_xM$ of incoming or outgoing unit tangent vectors at $x$, with respect to the metric defined by the Hessian matrix, are all the same (we will denote this constant as $\theta_M$, see Lemma \ref{lemma-theta-M}).

The consideration of the behavior of geodesics at vertices is tricky.
The discussion can be carried out
in a small neighborhood of the vertex, which is modelled as a union of Minkowski cones with the unique common vertex. We call it the {\it tangent cone}, and simply denote it as $T_xM$. The key observation is that, a
geodesic in $T_xM$ not passing $x$ can not rotate around $x$ for infinitely many times, hence it can be presented as the union of two rays and a finite number of line segments (see Theorem \ref{lemma-geodesic-not-passing-vertex-x}). This fact is obvious for piecewise flat Riemannian surfaces, But not very easy in the Finsler context. Its proof for the general situation
is quite lengthy, but for special cases, for example when $F$
is reversible or of Landsberg type, there are some shortcuts. This key observation indicates there are only three types of maximally extended geodesics in $T_xM$, those consisting of one ray at $x$, those consisting of two rays at $x$, and
those not passing $x$ (see Corollary \ref{cor-1}).

Using above local description for geodesics near a vertex, we shall give  several equivalent conditions for a geodesic in
$T_xM$ extended from one ray at $x$ to two rays (see Theorem \ref{main-thm-1} and Theorem \ref{main-thm-2}).
It must be pointed out that, the concept of curvature naturally
appears to measure the \textit{amount} of all such extensions for a given ray at $x$.
It is a natural
generalization of curvature in the piecewise flat Riemannian context. Similarly, it vanishes
for non-vertex points. Its dependence on the (incoming or outgoing) tangent vector is similar to that of the flag curvature of a smooth Finsler surface.
But when $M$ is of Landsberg type, the curvature is not relevant to the vertex, similar to the curvature form of a smooth Landsberg surface (see Theorem \ref{main-thm-3}).

Another main goal of this paper is to present a combinatoric version of the Gauss-Bonnet-Chern formula for piecewise flat Finsler surfaces. In Finsler geometry, a similar Gauss-Bonnet formula
as in Riemannian geometry is only available for a Landsberg metric \cite{BCS2000} \cite{BCS1996}. Our combinatoric version of the formula for a compact connected
piecewise flat Landsberg surface (without boundary) can be expressed as  $$\sum K(x)=\theta_M\chi(M),$$
in which the left side is a finite sum for all vertices and $\theta_M$ is
the constant previously mentioned for connected piecewise flat Landsberg surfaces
(see Theorem \ref{main-thm-4}).

Finally, we would like to mention that many notions and methods are also valid for
high dimensional piecewise flat Finsler spaces, and we expect that a combinatoric
Gauss-Bonnet-Chern formula for piecewise flat Landsberg manifolds will  also be valid; see
\cite{BS1996} for the Finslerian case.

This work is organized as the following. In Section 2, we summarize some fundamental knowledge on Finsler geometry. In Section 3, we give the definition
and notations for a piecewise flat Finsler surface. In Section 4, we define
geodesics on a piecewise flat Finsler surface and study its behavior crossing
an edge. We use the edge-crossing equation for a geodesic to define piecewise flat Landsberg surface and piecewise flat Berwald surface. In Section 5, we study the behavior of a geodesic at a vertex.
In Section 6, we define the curvature at its vertices under the above mentioned conditions. In particular, we prove that the curvature is irrelevant to the tangent vector when $M$ is of Landsberg type. In addition,
 we answer the question when a geodesic can be extended at a vertex. In Section 7,
we prove a combinatoric
Gauss-Bonnet formula for piecewise flat Landsberg surfaces.

\section{Preliminaries}
\subsection{Minkowski norm and Finsler metric}
A {\it Minkowski norm} $F$ on a real linear space $\mathbf{V}$ is a continuous
function $F:\mathbf{V}\rightarrow[0,+\infty)$ satisfying
\begin{description}
\item{\rm (1)} $F$ is positive and smooth when restricted to $\mathbf{V}\backslash \{0\}$.
\item{\rm (2)} $F(\lambda y)=\lambda F(y)$ when $\lambda\geq 0$.
\item{\rm (3)} For any linear coordinates $y=y^ie_i\in\mathbf{V}$, the Hessian matrix $$(g_{ij}(y))=\frac12[F^2]_{y^iy^j}$$
is positive definite for any $y\in\mathbf{V}\backslash \{0\}$.
\end{description}

The Hessian matrix of  the second derivatives of $F^2$ defines an inner product
$$\langle u,v\rangle_y^F=\frac12\frac{\partial^2}{\partial s\partial t}F^2(y+tu+sv)|_{s=t=0}=
g_{ij}(y)u^iv^j,$$
where $u=u^ie_i$ and $v=v^je_j$. The third derivative of $F^2$ at $y\in\mathbf{V}\backslash \{0\}$ defines the
Cartan tensor
$$C^F_y(u,v,w)=\frac14\frac{d^3}{drdsdt}F^2(y+ru+sv+tw)|_{r=s=t=0}
=\frac12\frac{d}{dt}\langle u,v\rangle^F_{y+tw}|_{t=0}.$$
Notice that the Cartan tensor $C^F_y(u,v,w)$ is symmetric with respect to $u$, $v$ and $w$, and  $C^F_y(u,v,y)=0$ for any $u$ and $v$ in $\mathbf{V}$.

A {\it Finsler metric} $F$ on a smooth manifold $M$ is a continuous function
$F:TM\rightarrow[0,+\infty)$ such that it is smooth on the slit
tangent bundle $TM\backslash 0$, and its restriction to each tangent space
is a Minkowski norm. We also call $(M,F)$ a Finsler space.

A Minkowski norm (resp., a Finsler metric) $F$ is {\it reversible} if $F(y)=F(-y)$
for any $y\in\mathbf{V}$ (resp., $F(x,y)=F(x,-y)$ for any $x\in M$ and $y\in T_xM$).

 As  examples, Riemannian metrics are the the special class of Finsler metrics such that Cartan tensors vanish everywhere, or equivalently, for any local coordinates the Hessian $(g_{ij}(x,y))$ only depends
on $x$. Randers metrics are the most simple and important non-Riemannian Finsler metrics \cite{Ra41}, which are of the form $F=\alpha+\beta$ where $\alpha$ is
a Riemannian metric and $\beta$ is a 1-form. They can be further generalized
to $(\alpha,\beta)$-metrics which are of the form $F=\alpha\phi(\beta/\alpha)$,
where $\alpha$ and $\beta$ are similar as for Randers metrics and $\phi$ is
a smooth function. Recently, there are many research works on $(\alpha,\beta)$-spaces; see for example \cite{BCS07}.

\subsection{Minkowski space, triangle and cone}

A real
linear space $\mathbb{R}^n$ can be viewed as an Abelian Lie group as well as its Lie algebra. So a Minkowski
norm $F$ on $\mathbb{R}^n$ determines a bi-invariant Finsler metric on it as well, which will be simply denoted as the same $F$. We call $F$ a Minkowski metric and $(\mathbb{R}^n,F)$ a {\it Minkowski space}, or a {\it Minkowski plane} when $n=2$.
Minkowski spaces are the most standard and simple flat Finsler spaces, i.e., the   flag curvature is identically zero. The geodesics of Minkowski spaces are straight lines.

A Finsler
space $(T,F)$ (or its completion) is called a {\it Minkowski triangle}, if it is isometric to a (closed or open) triangle in the
Minkowski plane $(\mathbb{R}^2,F)$, which have three vertices and three edges.
Similarly we can define a ($2$-dimensional) {\it Minkowski cone}, corresponding to the region between two rays from a point in $(\mathbb{R}^2,F)$. So we see a Minkowski cone has
one vertex and two edges.
Roughly speaking, when the "angle" between the two edges is not too big, a Minkowski cone can be regarded as a Minkowski triangle with two vertices (and also the edge between them) at infinity.
In later discussions, we also need to deal with some important Minkowski cones
with big "angles".
For example, with the vertices and edges properly chosen, any whole Minkowski space and any Minkowski half plane can be regarded as Minkowski cones.

\subsection{Geodesic and curvature}

In a Finsler space $(M,F)$, the length of a tangent vector is defined  by
the metric function $F$. So for any piecewise smooth path $\{p(t),t\in[a,b]\}$
(sometimes, we may denote it as $\{p(t),x_1,x_2\}$, where $x_1$ and $x_2$ are the initial point and end point respectively), the arc length
\begin{equation}\label{define-arc-length}
l(\{p(t),t\in[a,b]\})=\int_a^b F(p(t),\dot{p}(t))dt,
\end{equation}
 and the distance function
\begin{eqnarray}\label{define-distance}
d(x_1,x_2)=\inf\{l(\{p(t),t\in[a,b]\}),\forall \mbox{ piecewise smooth path } \{p(t),x_1,x_2\}\}
\end{eqnarray}
can be defined. It should be mentioned that the distance function is reversible iff the metric $F$ is reversible, i.e.
$F(x,y)=F(x,-y)$ for any $x\in M$ and $y\in T_xM$.

Using the variational method for the arc length functional (an equivalent and more convenient approach is to use the energy functional instead of the length functional), we can define the geodesics on a Finsler space, which  are smooth curves $\{c(t), t\in I\}\subset M$, where $I$ is a nonempty interval,
satisfying the local minimizing principle, i.e.,  for each $t_0\in I$, there
exists a sufficiently small positive $\epsilon$, such that for any $t_1,t_2\in
I\cap(t_0-\epsilon,t_0+\epsilon)$, $t_1<t_2$, we have $l(\{c(t),t\in[t_1,t_2]\})=d(c(t_1),c(t_2))$.

Flag curvature is a natural generalization of sectional curvature in
Riemannian geometry.  Given a  nonzero vector $y\in T_xM$ (the flag pole),
and a tangent plane $\mathbf{P}\subset T_xM$ spanned by $y$ and $w$, the flag curvature is defined as
\begin{equation}\label{5999}
K^F(x,y,\mathbf{P})=K^F(x,y,y\wedge w)=\frac{\langle R^F_y w,w\rangle^F_y}{\langle w,w\rangle^F_y\langle y,y\rangle^F_y-[{\langle y,w\rangle^F_y}]^2},
\end{equation}
where $R^F_y:T_xM\rightarrow T_xM$ is the \textit{Riemann curvature} which appear in the Jacobi equation for a smooth
family of geodesics of constant speeds (see \cite{BCS2000} for its explicit presenting by standard local coordinates).

Notice that the flag curvature  depends on $y$ and $\mathbf{P}$ rather than $w$. When $F$ is a Riemannian metric, it is just the sectional curvature and it is  irrelevant to the choice
of the flag pole.

\section{The definition and some notations}

Let $M$ be a surface endowed with a locally finite triangular decomposition, i.e.,
$M=\bigcup T_i$, in which each $T_i$ is a closed triangle in $M$, and
any different $T_i$ and $T_j$ may only intersect at exactly
one common edge or one common vertex.

\begin{definition}\label{define-PL-Finsler-surface}
We say that $M$ is a {\it piecewise flat Finsler surface}, if each triangle $T_i$ is endowed
with a flat metric $F_i$, such that
\begin{description}
\item{\rm (1)} Each $(T_i,F_i)$ is a Minkowski triangle;
\item{\rm (2)} If $T_i$ and $T_j$ have a common edge, then $F_i$ and $F_j$ coincide on this edge.
\end{description}
\end{definition}

For any $x\in M$, there are only finitely many closed triangles containing it. We list them as $(T_1,F_1)$, $\ldots$, $(T_n,F_n)$.
They can be enlarged to Minkowski
cones $C_{x,1}$, $\ldots$, $C_{x,n}$ with the unique common vertex $x$, and each
$F_i$ can be naturally extended to $C_{x,i}$.  We call
$T_xM=\mathop{\bigcup}\limits_{i=1}^n C_{x,i}$ a {\it tangent cone}.
The point $x$ is the only {\it vertex} in $T_xM$. When $i\neq j$, $C_{x,i}\cap C_{x,j}$ is either $x$, or a ray at $x$ (we call it an {\it edge}).

For example, when $x$ is not an edge point or a vertex, $T_xM$ contains only one single Minkowski cone which can be identified with a Minkowski plane. When $x$ is an edge point but not a vertex, $T_xM$ contains two half Minkowski planes with the same boundary line, each of which is a Minkowski cone.
When $x$ is a vertex, because we have required any two different triangles can
have at most 1 common edge, there are at least 3 triangles contains $x$, i.e. $T_xM$ contains at least 3 Minkowski cones. Notice this requirement is not
essential, but convenient. It can always be achieved by suitable subdivisions
for $M$, which does not change the geometry of the piecewise flat surface.

Notice for each $x\in M$, it has a neighborhood
which can be isometrically imbedded into $T_xM$, so it is convenient to use
$T_xM$ instead of $M$ to study the local geometric properties (behavior of geodesics, curvatures, etc.) around $x$.

Now we define tangent vectors. The one-side derivative of the ray $\{c(t), t\in[0,\infty)\}\subset C_{x,i}\subset T_xM$ at $c(0)=x$ defines an {\it outgoing tangent vector} at $x$ in $C_{x,i}$, denoted as $\dot{c}(0)^+$ or $v^+$. It can be presented as an \textit{arrow} in $C_{x,i}$
 with the initial point $x$. Similarly, we can define an {\it incoming tangent vector} $\dot{c}(0)^-$ or $v^-$ at $x$ in $C_{x,i}$, for the ray $\{c(t),t\in(-\infty,0]\}$ with $c(0)=x$, and present it as an \textit{arrow} in $C_{x,i}\subset T_xM$ with end point  $x$. To summarize, in both cases, we call it a \textit{tangent vector} at $x$ in $C_{x,i}$, and  use the superscript $\pm$ to indicate it is incoming or outgoing.

Positive scalar multiplications on $T_xM$, regarding $x$ as the origin, are well defined on each $C_{x,i}$, preserving the set of all outgoing (incoming) tangent vectors at $x$, and changing $F_i$ by a scalar. Negative scalar multiplications of an incoming (outgoing) tangent vector at $x$ can be naturally viewed as an outgoing (incoming) tangent vector  at $x$, with its initial and end points of the arrow switched.
Generally speaking, other addition and substraction on $T_xM$ are only conditionally defined.

The length of a tangent vector $v^\pm$ at $x$ is given by the norm $F_i$, when
$v^\pm$ is presented as an arrow in $C_{x,i}$.
If a nonzero tangent vector $v^\pm$ belongs to the edge between two different cones $C_{x,i}$ and $C_{x,j}$, by (2) in Definition \ref{define-PL-Finsler-surface}, $F_i$
and $F_j$ defines the same length for $v^\pm$ in this case.
If the cone $C_{x,i}$ is specified, we will simply denote the length of $v^\pm$ as $F(v^\pm)$.

For the simplicity in later discussions, we will use the following notations for line segments and rays. For $x_1$ and $x_2$ from the same Minkowski space, cone or triangle, we denote by $L_{x_1,x_2}$ the path along the straight line segment from $x_1$ to $x_2$, by $v_{x_1,x_2}$ the vector represented by the arrow from $x_1$ to $x_2$ (it can be naturally regarded as an outgoing tangent vector at $x_1$ as well as an incoming tangent vector at $x_2$). Moreover, we denote by $R_{x',v^\pm}$ the curve $c(t)$ along the ray at $c(t')=x'$, which is defined for either $t\in(-\infty,t']$ or $t\in[t_0,\infty)$ for some $t'$, depending on the outgoing or incoming vector $v^\pm$.

The set of all unit outgoing (incoming) tangent vectors in $C_{x,i}$ is denoted as $\mathcal{S}_{x,i}^+M$ ($\mathcal{S}_{x,i}^-M$ respectively). The unions $\mathcal{S}_x^\pm M$ of $\mathcal{S}_{x,i}^\pm M$ are called respectively the {\it outgoing indicatrix} and the {\it incoming indicatrix} at $x$. They are circles on which the arc length functionals are defined according to the Hessian matrices. We will simply use $l_x^\pm(\cdot)$ to denote arc length functional on $\mathcal{S}_x^\pm M$, and use it to define angles. Notice $l_x^+(\mathcal{S}_x^+M)=l_x^-(\mathcal{S}_x^-M)$ is obviously true when $x$ is not edge or vertex points, but generally it is not true when $x$ is on an edge, and in particular, when $x$ is
a vertex.

Now we define the \textit{angle}, which has different appearances
and notations in the following three cases.

{\bf Case 1.}
Let $\{v^\pm(t),t\in I\}$ be a continuous monotonous family of vectors in $\mathcal{S}_x^\pm M$, that is,  $v^\pm(t)$ keeps rotating along $\mathcal{S}_x^\pm M$ in the same direction. Then we define $l_x^\pm(\{v^\pm(t),t\in I\})$ to be the angle that this family of unit vectors in $\mathcal{S}_x^\pm M$ have swiped.

{\bf Case 2.} As a preliminary, we define the projection maps
$\mathrm{Pr}_x^\pm:T_xM\backslash\{x\}\rightarrow
\mathcal{S}_x^\pm M$.
Given $x'\neq x$ in $T_xM$, there exist unique unit
tangent vectors
$$\mathrm{Pr}_x^+(x')=\frac{v_{x,x'}}{F(v_{x,x'})}
\in\mathcal{S}_x^+ M$$
and
$$\mathrm{Pr}_x^-(x')=\frac{v_{x',x}}{F(v_{x',x})}
\in\mathcal{S}_x^- M.$$

For a curve $\{x(t),t\in I\}$ on $T_xM$ not passing $x$ and $\mathrm{Pr}_x^\pm(\{x(t),t\in I\})$ rotates monotonously in $\mathcal{S}_x^\pm M$,  we define
\begin{eqnarray*}
\sphericalangle_x^+(\{x(t),t\in I\})&=&l_x^+(\{\mathrm{Pr}_x^+(p(t)),t\in I\})\\
&=&l_x^+(\{\frac{v_{x,x(t)}}{F(v_{x,x(t)})}
\in\mathcal{S}_x^+M, t\in I\})
\end{eqnarray*}
to be the angle that this curve has swiped in $\mathcal{S}_x^\pm M$. The definition of $\sphericalangle_x^-(\cdot)$ is similar, using the projection map $\mathrm{Pr}_x^-$.

{\bf Case 3.}
We can also define the angle between two outgoing (or incoming) tangent vectors at $x$ as in classical geometry.

If  $v_1$ and $v_2$ are two nonzero tangent vectors satisfying the following conditions:
\begin{description}
\item{\rm (1)} They can be parallelly moved to be outgoing
    tangent vectors $v_{x,x_1}$ and $v_{x,x_2}$ with $x_1$ and $x_2$ in some $C_{x,i}\subset T_xM$.
\item{\rm (2)} The cone $C_{x,i}\subset T_xM$ described in (1) is unique.
\end{description}
Then up to re-parametrization, there exists a unique monotonous
smooth family of unit vectors $\{v(s),s\in I\}\subset
\mathcal{S}_x^+M$ connecting $v_{x,x_1}/F_i(v_{x,x_1})$
with $v_{x,x_2}/F_i(v_{x,x_2})$, and we define,
$$\angle_x^+(v_1,v_2)=l_x^+(\{v(s),s\in I\}).$$
We may also take the following equivalent definition,
$$\angle_x^+(v_1,v_2)=\sphericalangle_x^+(L_{x_1,x_2}).$$

The definition for $\angle_x^-(\cdot,\cdot)$ is similar.

Notice when $x$ is not an edge point or a vertex,
$\angle_x^{\pm}(v_1,v_2)$ can be defined when $v_1$ and $v_2$ are not vectors in opposite directions. We can use a suitable line
passing $x$ to divide the Minkowski plane $T_xM$ into two half
plane such that one of them contains both $v_1$ and $v_2$ (after they have been parallelly moved to be outgoing or incoming tangent vectors at $x$), and treat $x$ as an edge point. Roughly speaking, we take the \textit{less than flat angle}.

When $x$ is an edge point but not a vertex, and $v_1$ and $v_2$ are parallel to the edge with opposite directions, the angle
$\angle_x^\pm(v_1,v_2)$ may not be well defined because different \textit{flat angles} may have different values.

When $x$ is a vertex, because we have assumed there are at least three different $C_{x,i}$'s in $T_xM$, for any  $x_1\neq x\neq x_2$ in $C_{x,i}$, $\angle_x^+(v_{x,x_1},v_{x,x_2})$
and $\angle_x^-(v_{x_1,x},v_{x_2,x})$ are well defined.

\section{Geodesics and the edge-crossing equation}
\subsection{Geodesics of a piecewise flat Finsler surface}

On a piecewise flat Finsler surface $(M,F)$, we can similarly define piecewise smooth curves or paths locally as a finite union of smooth curves defined on closed intervals. A smooth curve is required to be contained in a single
Minkowski triangle. Notice that the tangent vector field $\dot{c}(t)$ on a piecewise smooth curve $c(t)$ are
well-defined
almost everywhere. The one-side tangent vectors $\dot{c}^\pm(t)$ at edge or vertex points are viewed as an outgoing and incoming tangent vector respectively. So we have the arc length $l(\{c(t),t\in I\})$ as in (\ref{define-arc-length}), and the distance function $d(\cdot,\cdot)$ as in (\ref{define-distance}).

Notice that if we change the term piecewise smooth in the above notion
to piecewise linear, or if we change the triangulation by subdivisions, the distance function $d(\cdot,\cdot)$ will not be changed.

Now we are ready to define  geodesics  on a piecewise flat Finsler surface.

\begin{definition}
A geodesic on a piecewise flat Finsler surface $M$ is a piecewise smooth curve $\{c(t),t\in I\}\subset M$ defined on some interval $I$, satisfying  the constant speed and local minimizing principles as the following:
\begin{description}
\item{\rm (1)} The length of $\dot{c}(t)$ is a nonzero constant wherever it is defined.
\item{\rm (2)} For any $t_0\in I$, there exists an $\varepsilon>0$, such that
 whenever $t_1,t_2\in I\cap(t_0-\varepsilon,t_0+\varepsilon)$ and $t_1<t_2$, $\{c(t),t\in[t_1,t_2]\}$ is a minimizing path from $c(t_1)$ to $c(t_2)$,
 i.e., $l(\{c(t),t\in[t_1,t_2]\})=d(c(t_1),c(t_2))$.
\end{description}
\end{definition}

 In order to get explicit descriptions for geodesics on a piecewise flat Finsler surface, we  study their local behaviors in the rest of this section
for those crossing an edge, and in Section 5 for those through  a vertex.

The behavior of a geodesic inside an open triangle is easy. It must be a straight line segment. It can be further extended in both directions until it meets an edge or a vertex.

The behavior of a geodesic crossing an  edge at some point
 $x$ which is not a vertex, is described by a classical theory called the
Snell-Descartes law. To make this work self-contained,
we will present this law  in two approaches below, the variational method
and the convexity technique.

\subsection{The edge-crossing equation of geodesics}

Let $x$ be an edge point but not a vertex on the piecewise flat
Finsler surface $(M,F)$, and $\{c(t),t\in I\}$ with $x=c(0)$ is a unit speed geodesic.

First we assume $x=c(0)$ and the interval $I$ contains $(-\varepsilon,\varepsilon)$
for some $\varepsilon>0$. Without loss of generality, we assume the speed of the geodesic $c(t)$ is $1$. By the local minimizing property of geodesics, there are only two cases.

In the first case, there exists  a sub-interval $I_1\subset I$ such that the geodesic segment $\{c(t), t\in I_1\}$, is  contained in an edge. Then it can be extended to the whole edge.

In the second case, none of the two tangent vectors $\dot{c}^+(0)$ and $\dot{c}^-(0)$ at $x$, outgoing and incoming respectively, coincides the directions of the edge $E$. Then the the geodesic contains two line segments in two sides of $E$ with the common end point $x$.

To study the local behavior of the geodesic around $x$.
Now we consider the geodesic $c(t)$ in the tangent cone $T_xM$.
Denote $(C_{x,1},F_1)$ and $(C_{x,2},F_2)$ the two Minkowski half planes in $T_xM$, with the straight line $E=C_{x,1}\cap C_{x,2}$ passing $x$. Assume the geodesic $c(t)$ goes from $C_{x,1}$ to $C_{x,2}$. Then in $T_xM$, this geodesic can be extended to $t\in (-\infty,\infty)$, i.e.,  it is the union of the two rays $R_{x,\dot{c}(0)^-}\subset C_{x,1}$ and $R_{x,\dot{c}(0)^+}\subset C_{x,2}$. Let $v$ be a $F$-unit vector in $E$.

Notice that such a geodesic is globally minimizing. In fact we have the following lemma.

\begin{lemma} \label{lemma-global-minimizing} Let $x$ be any point on
a piecewise flat Finsler surface.
Assume $\{c(t),t\in(-\infty,\infty)\}$ is a geodesic on
$T_xM$ consisting of two rays at $x$. Then it is a minimizing geodesic, i.e. for any  $t_1<t_2$, it is a minimizing path from $x_1=c(t_1)$ to $x_2=c(t_2)$.
\end{lemma}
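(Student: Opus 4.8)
The plan is to exhibit a \emph{calibration}: a continuous function $\phi\colon T_xM\to\mathbb{R}$ that is forward $1$-Lipschitz for $F$ and whose increment along $c$ equals the length of $c$. Granting such a $\phi$, any competing path $\{\gamma(s)\}$ in $T_xM$ from $x_1$ to $x_2$ satisfies $l(\gamma)=\int F(\dot\gamma)\,ds\ge\int \tfrac{d}{ds}\phi(\gamma(s))\,ds=\phi(x_2)-\phi(x_1)=l(\{c(t),t\in[t_1,t_2]\})$, and since $c$ realizes the increment this forces $c$ to be minimizing. I stress that this argument is insensitive to the irreversibility of $F$, because only the one-sided bound $\tfrac{d}{ds}\phi(\gamma)\le F(\dot\gamma)$ is used. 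Thus the whole problem reduces to constructing $\phi$; equivalently, since $l(\{c(t),t\in[t_1,t_2]\})=F(v_{x_1,x})+F(v_{x,x_2})$, I must prove $d(x_1,x_2)\ge F(v_{x_1,x})+F(v_{x,x_2})$, the reverse inequality being witnessed by $c$ itself.

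The engine for the Lipschitz bound is the fundamental inequality of a Minkowski norm, $\langle u,w\rangle_u^F\le F(u)F(w)$ with equality iff $w$ is a nonnegative multiple of $u$, a consequence of the convexity of $F$. Writing $u_1=\dot{c}(0)^-$ and $u_2=\dot{c}(0)^+$ for the unit incoming and outgoing directions, and letting $\xi^\ast_u(w)=\langle u,w\rangle_u^F$ denote the dual covector of a unit $u$, I would build $\phi$ cone by cone as an affine map $\phi(y)=\xi_i(v_{x,y})$ on each $C_{x,i}$, subject to three requirements: (i) $\xi_i(w)\le F_i(w)$ for all $w$, which gives the forward $1$-Lipschitz bound on $C_{x,i}$ (the velocity of a path interior to the cone is unconstrained, so the inequality is needed in every direction); (ii) $\xi_i=\xi_{i+1}$ on each shared edge, so that $\phi$ is continuous, the common value being admissible for both sides because $F_i=F_{i+1}$ on that edge by Definition~\ref{define-PL-Finsler-surface}(2); and (iii) $\xi_i=\xi^\ast_{u_1}$ on the cone carrying $R_{x,\dot{c}(0)^-}$ and $\xi_i=\xi^\ast_{u_2}$ on the cone carrying $R_{x,\dot{c}(0)^+}$, which forces equality in the Lipschitz bound precisely along $c$ and makes $\phi$ calibrated. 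Every $\xi_i$ annihilates $x$, so $\phi(x)=0$ and $\phi$ is single-valued.

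When $x$ is an edge point but not a vertex the construction is immediate: $T_xM$ consists of the two half-planes carrying the two rays, and requirement (ii) on the single shared edge $E$ is exactly the Snell--Descartes (edge-crossing) equation $\xi^\ast_{u_1}(v)=\xi^\ast_{u_2}(v)$ for $v$ along $E$, so $\phi$ is defined and calibrated at once and the lemma follows. The substantive case is a vertex, where the two rays sit in two cones of the cyclic fan $C_{x,1},\dots,C_{x,n}$, and the dual covectors $\xi^\ast_{u_1}$ and $\xi^\ast_{u_2}$ must be joined by a chain of admissible covectors across the intervening cones on each of the two arcs into which $c$ cuts the fan. Feasibility of a single propagation step across an edge is guaranteed by (ii) together with the matching of metrics on shared edges; the delicate point is that the chain must \emph{close up}, i.e. the covector propagated to the far cone must be permitted to equal the prescribed $\xi^\ast_{u_2}$.

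I expect this closing-up to be the main obstacle, and I would derive it from the hypothesis that $c$ is a geodesic: local minimality at the vertex is precisely the convex-duality condition that the two angular gaps across $c$ are each at least flat, and this is what produces a globally consistent family of supporting covectors. A convenient auxiliary tool is the dilation self-similarity of the metric cone $T_xM$: the maps $D_\lambda\colon y\mapsto x+\lambda(y-x)$ scale all distances by $\lambda$ and preserve the piecewise flat structure, so the $\varepsilon$-local minimality at $x$ supplied by the geodesic hypothesis upgrades to minimality of every symmetric segment $\{c(t),\,|t|\le s\}$, which should reduce a general pair $t_1<0<t_2$ to the symmetric situation and corroborate the existence of the calibration in the general Finsler setting.
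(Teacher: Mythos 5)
Your final paragraph is, in substance, the paper's entire proof, and you should promote it from ``auxiliary corroboration'' to the argument itself: positive scalar multiplications centered at $x$ (well defined on each cone $C_{x,i}$) preserve the two-ray geodesic, map competitor paths bijectively to competitor paths, and rescale arc length --- hence distance --- by the same factor $\lambda$; so the $\varepsilon$-local minimality at $t_0=0$ supplied by the definition of geodesic rescales to minimality between \emph{any} pair $c(t_1)$, $c(t_2)$. Your restriction to symmetric segments is unnecessary (local minimality at $t_0=0$ already covers all pairs $t_1,t_2\in(-\varepsilon,\varepsilon)$, and in any case a subsegment of a minimizer is a minimizer, also for irreversible $F$), and the hedged ``should reduce'' undersells a step that is already complete. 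This two-line scaling argument works uniformly whether $x$ is a smooth point, an edge point, or a vertex, which is exactly how the paper proves the lemma.

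By contrast, the calibration route you present as the main argument has a genuine gap precisely where you flag it: the closing-up of the covector chain at a vertex. The edge-point case is indeed fine --- continuity of $\phi$ across the single edge is the edge-crossing equation, which you can extract from local minimality by a first variation on a short symmetric subsegment (note you cannot cite Lemma \ref{lemma-geodesic-crossing-edge} as stated, since its proof in the paper invokes Lemma \ref{lemma-global-minimizing}), and the fundamental inequality $\langle u,w\rangle^F_u\le F(u)F(w)$ gives requirement (i); this yields a pleasant alternative proof there. But at a vertex, the claim that local minimality at $x$ is ``precisely'' the condition that both angular gaps are at least flat, and that this yields a globally consistent family of supporting covectors, is the lemma's whole content in dual form: in the paper this equivalence is Theorems \ref{main-thm-1} and \ref{main-thm-2}, whose proofs use Lemma \ref{lemma-global-minimizing} (via Lemma \ref{lemma-2} and Theorem \ref{lemma-local-minimizing}), so invoking anything of that shape here would be circular, while proving the closing-up independently would mean redeveloping the perturbation and monotonicity machinery of Section 5. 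Even granting flat angular gaps on both sides, constructing the interpolating covectors through the intermediate cones (staying inside each dual unit ball while matching on every shared edge, for a possibly irreversible norm) is an argument you gesture at but do not supply. So as written the primary route is a plausible program, not a proof; the dilation argument you relegated to the end is the proof.
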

\begin{proof} Notice that the geodesic $\{c(t),t\in(-\infty,\infty)\}$ indicated in the lemma is preserved
by any positive scalar multiplication on $T_xM$, which preserves each ray at $x$, and changes the arc length and distance by the same scalar. By a suitable positive scalar multiplication, any $x_1$
and $x_2$ on this geodesic can be moved to be sufficiently close to $x$, where we can apply the
locally minimizing property for $c(t)$, which also proves the minimizing property of $c(t)$ from $x_1$ to $x_2$.
\end{proof}

Using the variational method we can get  the edge-crossing equation for geodesics.

\begin{lemma}\label{lemma-geodesic-crossing-edge}
 Let $\{c(t),t\in(-\infty,\infty)\}$ be a geodesic in $T_xM$ crossing the line $E$ from $(C_{x,1},F_1)$ to $(C_{x,2},F_2)$ at $c(0)=x$. Then
we have
\begin{equation}\label{geodesic-equation-crossing-edge}
\langle \dot{c}^-(0),v\rangle_{\dot{c}^-(0)}^{F_1}
=\langle \dot{c}^+(0),v\rangle_{\dot{c}^+(0)}^{F_2}.
\end{equation}
\end{lemma}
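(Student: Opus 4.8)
The plan is to use the variational method: exploit the fact, guaranteed by Lemma \ref{lemma-global-minimizing}, that the two-ray geodesic is globally minimizing, and extract the edge-crossing equation as the first-order condition for a one-parameter family of competitor paths whose break point slides along the edge $E$. Concretely, I would fix $x_1=c(-a)$ and $x_2=c(b)$ for small $a,b>0$, so that (since $c$ has unit speed) $x-x_1=a\dot{c}^-(0)$ lies in $C_{x,1}$ and $x_2-x=b\dot{c}^+(0)$ lies in $C_{x,2}$. For each small $s$, set $p(s)=x+sv\in E$ and consider the broken path running straight from $x_1$ to $p(s)$ inside the half-plane $C_{x,1}$ and then straight from $p(s)$ to $x_2$ inside $C_{x,2}$; by convexity of each half-plane this is an admissible path in $T_xM$. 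Using translation invariance of the Minkowski norms $F_1,F_2$, its length is
\[
L(s)=F_1\bigl(a\dot{c}^-(0)+sv\bigr)+F_2\bigl(b\dot{c}^+(0)-sv\bigr).
\]
At $s=0$ the broken path coincides with $c$ restricted to $[-a,b]$, so $L(0)=d(x_1,x_2)$ by Lemma \ref{lemma-global-minimizing}, while $L(s)\geq d(x_1,x_2)$ for every $s$. Hence $s=0$ is a minimum of the smooth function $L$, and the identity I am after is precisely $L'(0)=0$.

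The one computational ingredient I would establish first is the differentiation formula for a Minkowski norm,
\[
\frac{d}{ds}F(y+sw)\Big|_{s=0}=\frac{\langle y,w\rangle_y^F}{F(y)}\qquad(y\neq 0),
\]
which follows from Euler's theorem applied to the degree-one homogeneous function $[F^2]_{y^j}$: since $g_{ij}(y)y^i=\tfrac12[F^2]_{y^j}$, one obtains $\langle y,w\rangle_y^F=\tfrac12\frac{d}{ds}F^2(y+sw)|_{s=0}=F(y)\,\frac{d}{ds}F(y+sw)|_{s=0}$. Applying this to each summand of $L$, together with the degree-zero homogeneity of the fundamental tensor in its base point (so that $\langle ay,w\rangle_{ay}^F=a\langle y,w\rangle_y^F$) and the normalizations $F_1(\dot{c}^-(0))=F_2(\dot{c}^+(0))=1$, the scaling factors $a$ and $b$ cancel and I get
\[
L'(0)=\langle \dot{c}^-(0),v\rangle_{\dot{c}^-(0)}^{F_1}-\langle \dot{c}^+(0),v\rangle_{\dot{c}^+(0)}^{F_2}.
\]
Setting $L'(0)=0$ yields exactly (\ref{geodesic-equation-crossing-edge}).

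There is no deep obstacle here; the only points demanding care are bookkeeping ones. First, I must confirm that the competitor paths genuinely remain inside the prescribed half-planes, so that their lengths are measured by the correct norm on each side—this is precisely the convexity of $C_{x,1}$ and $C_{x,2}$. Second, the differentiation formula requires $y\neq 0$, which holds because for small $s$ the break point stays off the vertex, keeping both arguments $a\dot{c}^-(0)+sv$ and $b\dot{c}^+(0)-sv$ nonzero and the norms smooth. I would also record the alternative convexity argument the paper alludes to: one can instead show directly that any crossing direction violating (\ref{geodesic-equation-crossing-edge}) admits a strictly shorter nearby broken path, contradicting minimality, which avoids differentiation but reproduces the same identity.
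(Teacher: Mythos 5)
Your proposal is correct and follows essentially the same route as the paper: both invoke Lemma \ref{lemma-global-minimizing} to get global minimality of the two-ray geodesic, compare against broken paths with break point $x+sv$ sliding along $E$, and read off the edge-crossing equation as the vanishing derivative $f'(0)=0$ of the length function $f(s)=F_1(\dot{c}^-(0)+sv)+F_2(\dot{c}^+(0)-sv)$ (the paper simply takes $a=b=1$, so your homogeneity cancellation of the scaling factors is an unneeded but harmless extra). Your explicit verification of the differentiation formula $\frac{d}{ds}F(y+sw)\big|_{s=0}=\langle y,w\rangle_y^F/F(y)$ is a detail the paper leaves implicit, and is a welcome addition.
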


\begin{proof} By Lemma \ref{lemma-global-minimizing}, the geodesic $c(t)$
is minimizing from $x'=c(-1)$ to $x''=c(1)$. Consider a continuous family $\{p_s(t),t\in [-1,1]\}$ of paths from $x'=p_s(-1)$ to $x''=p_s(1)$ defined for all $s$ in a small neighborhood of $0$, such that each path consists of two straight line segments
$L_{x',x_s}$ and $L_{x_s,x''}$, where $x_s\in E$ and the vector from $x$ to $x_s$ is $v_{x,x_s}=sv$. The arc length of $\{p_s(1),t\in[-1,1]\}$ is then
$f(s)=F_1(\dot{c}^-(0)+sv)+F_2(\dot{c}^+(0)-sv)$. It is minimizing when $s=0$,
so $$f'(0)=\langle \dot{c}^-(0),v\rangle_{\dot{c}^-(0)}^{F_1}
-\langle \dot{c}^+(0),v\rangle_{\dot{c}^+(0)}^{F_2}=0,$$
which completes the proof of the lemma.
\end{proof}

We stress that
(\ref{geodesic-equation-crossing-edge}) is crucial for the study of piecewise flat Finsler surfaces. In the following, we  will refer to  it as {\it the  edge-crossing equation}.
We have seen that it must be satisfied for any geodesic crossing an edge. By Lemma \ref{geodesic-extension-crossing-edge} below, we will see, it can be use to equivalently define the gedesics locally around
an edge point which is not a vertex. Globally, any geodesic
on a piecewise flat surface, which does not pass any vertex
is a union of line segment $L_{x_i,x_{i+1}}$, such that
each $x_i$ is either an endpoint of the geodesic, or a point
on some edge where the edge-crossing equation is satisfied.

In our further study of (\ref{geodesic-equation-crossing-edge}),  the following convexity lemma
for Minkowski norms will be useful.

\begin{lemma} \label{lemma-convexity}
Let $(\mathbf{V},F)$ be a real vector space endowed with
a Minkowski norm. Assume $u$, $v$ and $w$ are the $F$-unit vectors such that $u$ is a non-negative linear combination of $v$ and $w$. Then we have $\langle u,w\rangle_u^F\geq\langle v,w\rangle_v^F$, where the
equality holds if and only if  $u=v$.
\end{lemma}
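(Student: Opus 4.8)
The plan is to analyze the geometry of the convex indicatrix (unit sphere) of $F$ restricted to the two-dimensional plane spanned by $v$ and $w$, reducing the general statement to a planar convexity computation since $\langle\cdot,\cdot\rangle$-values only involve the restriction of $F^2$ to this plane. First I would set up coordinates: since $u$ is a non-negative linear combination of $v$ and $w$, all three unit vectors lie in the plane $\mathbf{P}=\mathrm{span}\{v,w\}$ (assuming $v,w$ linearly independent; the dependent case is trivial). I would parametrize the indicatrix curve $\mathcal{I}=\{y\in\mathbf{P}:F(y)=1\}$ as a convex closed curve and locate $v$, $u$, $w$ on it, noting that the hypothesis places $u$ on the shorter arc of $\mathcal{I}$ between $v$ and $w$ (the arc on the same side as non-negative combinations).

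The key analytic tool is the geometric meaning of $\langle y, w\rangle_y^F$ for a unit vector $y$. Writing $y(\theta)$ for a smooth parametrization of $\mathcal{I}$ by some angle-like parameter, the fundamental identity $C^F_y(\cdot,\cdot,y)=0$ and the $1$-homogeneity of $F$ give $\langle y,y\rangle_y^F = F(y)^2 = 1$, and differentiating along the indicatrix shows that $\langle y, y'(\theta)\rangle_y^F=0$, i.e. the tangent $y'(\theta)$ to the indicatrix is $g_y$-orthogonal to $y$. This means the value $\langle y,w\rangle_y^F$ is governed by how far $w$ deviates from the $g_y$-orthogonal direction to $y$. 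The plan is to define $h(t)=\langle y(t), w\rangle_{y(t)}^F$ along the arc from $v$ to $w$ and prove $h$ is monotonically non-decreasing as $y$ moves from $v$ toward $w=y$ (where $h(w)=1$ is maximal), which immediately yields $\langle u,w\rangle_u^F\geq\langle v,w\rangle_v^F$ with equality exactly when $u=v$.

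To establish the monotonicity of $h$, I would compute $\tfrac{d}{dt}\langle y(t),w\rangle_{y(t)}^F$ using the Cartan tensor:
\begin{equation*}
\frac{d}{dt}\langle y(t),w\rangle_{y(t)}^F
= \langle y'(t),w\rangle_{y(t)}^F + 2\,C^F_{y(t)}\bigl(y(t),w,y'(t)\bigr).
\end{equation*}
Here the first term comes from differentiating the first slot and the second from differentiating the reference vector, via the defining relation $C^F_y(a,b,c)=\tfrac12\frac{d}{dt}\langle a,b\rangle^F_{y+tc}|_{t=0}$. I expect the cleanest route is to avoid this differential computation and instead argue directly from strict convexity: the function $s\mapsto F(y(s))$ on the plane, together with the supporting-line interpretation, shows that $\langle y,w\rangle_y^F$ equals $F(w)=1$ precisely when $w$ lies along the ray through $y$, and is strictly smaller otherwise, with the gap controlled monotonically by the arc distance on $\mathcal{I}$ from $y$ to $w$. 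Since $u$ lies strictly between $v$ and $w$ on the short arc (or equals $v$), its arc distance to $w$ is no larger than that of $v$ to $w$.

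The main obstacle will be handling the Finslerian asymmetry carefully: unlike the Euclidean case, $\langle y,w\rangle_y^F$ depends on the base point $y$ through $g_y$, so I cannot simply invoke symmetric bilinearity or the law of cosines. I expect the crux is to show strict monotonicity of $h$ and to nail the equality case, which requires the \emph{strict} positive-definiteness of the Hessian (condition (3) for a Minkowski norm) to rule out $h$ being locally constant. Concretely, strict convexity of the indicatrix guarantees that $y\mapsto g_y$-orthogonal direction turns strictly monotonically, forcing $h$ to be strictly increasing except at the endpoint $w$; hence equality $\langle u,w\rangle_u^F=\langle v,w\rangle_v^F$ can only occur when $u$ and $v$ are the same point on $\mathcal{I}$, i.e. $u=v$.
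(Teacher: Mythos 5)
Your setup (reduce to the plane of $v$ and $w$, parametrize the indicatrix arc, and show $h(t)=\langle y(t),w\rangle^F_{y(t)}$ is strictly increasing as $y(t)$ moves toward $w$) has the right shape, and it is in essence the same monotonicity that the paper's proof establishes — the paper just realizes it on the dual side, via the Legendre transformation. But at the decisive step your argument is either circular or abandoned. The claim that $\langle y,w\rangle^F_y\leq F(w)=1$ with equality iff $w\in\mathbb{R}_{>0}y$, ``with the gap controlled monotonically by the arc distance on $\mathcal{I}$ from $y$ to $w$,'' is exactly the lemma restated: the fundamental inequality by itself gives no monotonicity of the gap. Likewise, ``strict convexity guarantees the $g_y$-orthogonal direction turns strictly monotonically, forcing $h$ to be strictly increasing'' does not follow as stated: $h(t)=\mathcal{L}(y(t))(w)$ is a height-type function along a convex curve, so it is strictly monotone only on each of the two arcs between its two critical points, and you never locate them. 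They sit at $y=w$ (maximum) and at the \emph{opposite} $F$-unit vector $w'=-w/F(-w)$ (minimum) — the paper's proof hinges on introducing $w'$ and noting that the kernel of evaluation-at-$w$ is tangent to the dual indicatrix $\mathcal{S}^{F^*}$ precisely at $\mathcal{L}(w)$ and $\mathcal{L}(w')$. Since $w'$ never appears in your proposal, you cannot certify that the arc carrying $v$, $u$, $w$ lies inside a monotonicity region, nor settle the equality case. (Relatedly, ``shorter arc'' is not the right notion for an irreversible norm; the correct arc is the one from $w'$ to $w$ passing through the positive cone spanned by $v$ and $w$.)

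The differential route you wrote down and then set aside is actually the cleaner way to close this gap, and it completes as follows. The Cartan term in your displayed derivative vanishes identically, since $2\,C^F_{y}(y,w,y')=0$ by $C^F_y(\cdot,\cdot,y)=0$ and total symmetry; hence $h'(t)=\langle y'(t),w\rangle^F_{y(t)}$. Because the tangent line to $\mathcal{I}$ at $y$ is a support line of a convex body containing the origin in its interior, $y'$ is not proportional to $y$, so one may write $w=\alpha y+\beta y'$, and then $\langle y,y'\rangle^F_y=0$ gives $h'(t)=\beta\,\langle y',y'\rangle^F_{y}$. Positivity of $\beta$ is a separation argument: the line $\mathbb{R}y$ meets $\mathcal{I}$ exactly at $y$ and at its opposite unit point, which lies on the complementary arc between $w'$ and $w$; hence $\mathbb{R}y$ strictly separates $w$ from $w'$ for every $y$ on the open arc from $w'$ to $w$, with $w$ on the $+y'$ side. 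This yields $h'>0$ on the whole arc (no reversibility needed), giving both the inequality and the equality case $u=v$. The paper's proof packages the same facts dually: $\mathcal{L}$ maps a monotone arc of $\mathcal{S}^F$ from $w'$ to $w$ to a monotone arc of $\mathcal{S}^{F^*}$ from $\mathcal{L}(w')$ to $\mathcal{L}(w)$, along which the linear functional ``evaluate at $w$'' strictly increases because its level directions are tangent to $\mathcal{S}^{F^*}$ only at those two endpoints. Either formulation works, but one of these two mechanisms must be supplied; as written, your proposal asserts the crux rather than proving it.
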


\begin{proof} Obviously we only need to prove the lemma when $\dim\mathbf{V}=2$.
Fix the $F$-unit vector $w$ and denote $w'$ the $F$-unit vector which is opposite to $w$. The Legendre transformation which maps $F$-unit
$u\in\mathbf{V}$ to the $F^*$-unit vector $\mathcal{L}(u)=\langle u,\cdot\rangle_u^F\in\mathbf{V}^*$ is a diffeomorphism from the indicatrix
$\mathcal{S}^{F}\subset\mathbf{V}\backslash\{0\}$ to the indicatrix $\mathcal{S}^{F^*}\subset\mathbf{V}^*\backslash\{0\}$.
Evaluation at $w$ can be viewed as a linear function on
$\mathbf{V}^*$, whose kernel is parallel to the tangent space of $\mathcal{S}^{F^*}$ at $\mathcal{L}(w)$ and $\mathcal{L}(w')$.
So for any monotonous curve $w^*(t)\in\mathcal{S}^{F^*}$ from
$\mathcal{L}(w')$ to $\mathcal{L}(w)$, $w^*(t)(w)$ is strictly increasing.

When two of $u$, $v$, $w$ and $w'$ are equal, the proof for the
lemma is easy. So we may assume that they are distinct. By the assumption in
the lemma,  $u$ is a positive
linear combination of $v$ and $w$, so $u$ and $v$ are contained in a monotonous curve $w(t)$ on $\mathcal{S}^F$,
such that $w(0)=w'$, $w(t_1)=v$, $w(t_2)=u$, and $w(t_3)=w$ with $0<t_1<t_2<t_3$. Correspondingly $\mathcal{L}(w(t))$ is a monotonous curve
on $\mathcal{S}^{F^*}$ from $\mathcal{L}(w')$ to $\mathcal{L}(w)$.
Our previous observation indicates
$$\langle u,w\rangle_u^F=\mathcal{L}(w(t_1))(w)<\mathcal{L}(w(t_2))(w)=
\langle v,w\rangle_v^F,$$
which proves the lemma.
\end{proof}

Lemma \ref{lemma-convexity} implies the following monotonous relation between $\dot{c}^-(0)$ and $\dot{c}^+(0)$.

\begin{lemma}\label{mono-lemma-0}
Let $\{c_1(t),t\in(-\infty,\infty)\}$ and $\{c_2(t),t\in(-\infty,\infty)\}$
be two different geodesics in $T_xM$, crossing $E=C_{x,1}\cap C_{x,2}$ from $C_{x,1}$ to $C_{x,2}$, with $c_1(0)=c_2(0)=x$. Let $v$ be any nonzero vector on $E$. Then
$\angle_x^-(\dot{c}_1^-(0),v)>\angle_x^-(\dot{c}_2^-(0),v)$ if and only if
$\angle_x^+(\dot{c}_1^+(0),v)>\angle_x^+(\dot{c}_2^+(0),v)$.
\end{lemma}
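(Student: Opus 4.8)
The plan is to collapse the claimed equivalence onto a single inequality between the two equal sides of the edge-crossing equation \eqref{geodesic-equation-crossing-edge}. Fix an $F$-unit vector $v$ along $E$; we may normalize $v$, since rescaling it by a positive factor multiplies both sides of \eqref{geodesic-equation-crossing-edge} by the same constant and leaves the angles untouched. The heart of the matter is to show that the single-variable function sending a unit tangent vector $u$ to $\langle u,v\rangle_u^{F_i}$ is a strictly monotone function of the angle $\angle_x^\pm(u,v)$, separately on the incoming half-plane $(C_{x,1},F_1)$ and on the outgoing half-plane $(C_{x,2},F_2)$.

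The required monotonicity is exactly the observation isolated inside the proof of Lemma \ref{lemma-convexity}: with the $F_i$-unit vector $v$ fixed and $v'$ denoting its opposite, the evaluation-at-$v$ functional is strictly increasing along any monotone arc of the dual indicatrix $\mathcal{S}^{F_i^*}$ running from $\mathcal{L}(v')$ to $\mathcal{L}(v)$. Transporting this through the Legendre transformation $\mathcal{L}$, as a unit vector $u$ rotates monotonically from $-v$ to $v$ through the interior of a half-plane, $\langle u,v\rangle_u^{F_i}$ strictly increases from $-1$ to $+1$. Since the incoming unit vectors fill exactly the half-indicatrix $\mathcal{S}_{x,1}^-M$ from $v$ to $-v$, and $\angle_x^-(u,v)$ increases precisely as $u$ turns from $v$ toward $-v$, I obtain that $\angle_x^-(u,v)$ strictly increases if and only if $\langle u,v\rangle_u^{F_1}$ strictly decreases; the identical statement holds on the outgoing side for $\angle_x^+(u,v)$ and $\langle u,v\rangle_u^{F_2}$.

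Granting this, the proof is a short chain of equivalences. Applying the incoming-side monotonicity gives
$$\angle_x^-(\dot{c}_1^-(0),v)>\angle_x^-(\dot{c}_2^-(0),v)\iff \langle\dot{c}_1^-(0),v\rangle_{\dot{c}_1^-(0)}^{F_1}<\langle\dot{c}_2^-(0),v\rangle_{\dot{c}_2^-(0)}^{F_1}.$$
The edge-crossing equation \eqref{geodesic-equation-crossing-edge} applied to each $c_i$ replaces each incoming inner product by the corresponding outgoing one, turning the right-hand inequality into $\langle\dot{c}_1^+(0),v\rangle_{\dot{c}_1^+(0)}^{F_2}<\langle\dot{c}_2^+(0),v\rangle_{\dot{c}_2^+(0)}^{F_2}$, and the outgoing-side monotonicity converts this into $\angle_x^+(\dot{c}_1^+(0),v)>\angle_x^+(\dot{c}_2^+(0),v)$, which is precisely the assertion.

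The step needing the most care is the orientation bookkeeping in the second paragraph: I must verify that ``increasing angle measured from $v$'' corresponds to the same direction of decrease of $\langle u,v\rangle_u^{F_i}$ on both half-planes, and that a geodesic genuinely crossing from $C_{x,1}$ to $C_{x,2}$ keeps $\dot{c}^\pm(0)$ in the \emph{open} half-indicatrices, so the endpoint directions $\pm v$ (where the monotonicity would only be weak) never occur. I would also record that reversing the chosen direction of $v$ flips the monotonicity on both sides simultaneously, so the stated equivalence is independent of the orientation and normalization of $v$; strictness throughout is inherited from the strict monotonicity supplied by Lemma \ref{lemma-convexity}.
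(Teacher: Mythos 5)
Your proposal is correct and follows essentially the same route as the paper: the paper's proof likewise reduces the angle comparison to the inner-product comparison $\langle u,v\rangle_u^{F_i}$ via the strict monotonicity of Lemma \ref{lemma-convexity} (whose Legendre-transform argument you unpack into an explicit two-way equivalence) and transfers it across the edge with the edge-crossing equation (\ref{geodesic-equation-crossing-edge}), applying the convexity lemma once on each side. One immaterial slip: for non-reversible $F_i$ the quantity $\langle u,v\rangle_u^{F_i}$ increases from $-F_i(-v)$ to $+1$ (with $v$ normalized to be $F_i$-unit), not from $-1$ to $+1$, but only the strict monotonicity is used, so the argument stands.
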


\begin{proof}
Assume $\angle_x^-(\dot{c}_1^-(0),v)>\angle_x^-(\dot{c}_2^-(0),v)$.
Then $\dot{c}_2^-(0)$ is a positive linear combination of $\dot{c}_1^-(0)$
and $v$. By Lemma \ref{lemma-convexity} and Lemma \ref{lemma-geodesic-crossing-edge}, we have
$$\langle \dot{c}_2^+(0),v\rangle_{\dot{c}_2^+(0)}^{F_2}
=\langle \dot{c}_2^-(0),v\rangle_{\dot{c}_2^-(0)}^{F_1}
<\langle \dot{c}_1^-(0),v\rangle_{\dot{c}_1^-(0)}^{F_1}
=\langle \dot{c}_1^+(0),v\rangle_{\dot{c}_1^+(0)}^{F_2},$$
so by Lemma \ref{lemma-convexity} again, in $\mathcal{S}_{x,2}^+ M$,
$-v/F(-v)$, $\dot{c}_1(0)^+$, $\dot{c}_2(0)^+$ and $v/F(v)$ are arranged
with the order presented above,  i.e.,  we also have
$\angle_x^+(\dot{c}_1(0)^+,v)>\angle_x^+(\dot{c}_2(0)^+,v)$.

The proof for the other direction is similar.
\end{proof}

The edge-crossing equation guarantees that any geodesic with an end point $x\in M$ on an edge but not a vertex can be further
extended.

\begin{lemma} \label{geodesic-extension-crossing-edge}
Let $\{c(t),t\in(-\infty,0]\}\subset C_{x,1}$
(or $\{c(t),t\in[0,\infty)\}\subset C_{x,2}$) be a unit speed geodesic in $T_xM$ with $c(0)=x$. Then it can be uniquely extended to a geodesic for $t\in(-\infty,\infty)$,
such that $\dot{c}^\pm(0)$ are related by (\ref{geodesic-equation-crossing-edge}).
\end{lemma}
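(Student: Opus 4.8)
The plan is to split the statement into two tasks: first, to show that the edge-crossing equation (\ref{geodesic-equation-crossing-edge}) determines a \emph{unique} outgoing direction $\dot c^+(0)$ from the given incoming direction $\dot c^-(0)$; second, to verify that the resulting union of two rays is genuinely a geodesic, i.e. locally minimizing across $x$. Throughout I work inside $T_xM$, which for an edge point is the plane glued from the two Minkowski half-planes $(C_{x,1},F_1)$ and $(C_{x,2},F_2)$ along the common line $E$, and I fix an $F$-unit vector $v$ on $E$ (recall $F_1(v)=F_2(v)$ since the two norms agree on $E$). Since the extension into $C_{x,1}$ is symmetric, I treat only the case of extending an incoming ray from $C_{x,1}$ into an outgoing ray in $C_{x,2}$.

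For the first task, set $a=\langle \dot c^-(0),v\rangle_{\dot c^-(0)}^{F_1}$, a fixed real number. By (\ref{geodesic-equation-crossing-edge}) I must produce a unit outgoing vector $u=\dot c^+(0)\in\mathcal{S}_{x,2}^+M$ with $\langle u,v\rangle_u^{F_2}=a$. The key point is that $u\mapsto\langle u,v\rangle_u^{F_2}$ is strictly monotonic as $u$ rotates along the half-indicatrix from $v$ to its opposite $-v/F(-v)$: this is exactly the content of Lemma \ref{lemma-convexity} (and is the mechanism already exploited in Lemma \ref{mono-lemma-0}), since whenever $u$ is a positive combination of $v$ and $-v/F(-v)$ the pairing strictly increases toward the endpoint $v$. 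Evaluating at the two ends gives $\langle v,v\rangle_v^{F_2}=F_2(v)^2=1$ and $\langle -v/F(-v),v\rangle^{F_2}=-F_2(-v)$, so by continuity the map assumes every value in $[-F_2(-v),1]$ exactly once. The identical endpoint computation in $C_{x,1}$ shows $a\in[-F_1(-v),1]=[-F_2(-v),1]$, whence a unique such $u$ exists, giving the unique candidate $\dot c^+(0)$.

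For the second task I must show the curve $c(t)$, $t\in(-\infty,\infty)$, formed from the incoming ray and $R_{x,\dot c^+(0)}$ is locally minimizing. Away from $t=0$ it is a straight segment inside a single half-plane, so the only issue is minimality across $x$. Taking $x'=c(-\delta)$ and $x''=c(\delta)$, I bound the length of any competing path from $x'$ to $x''$: such a path must meet $E$ at some point $q$, and its length is at least $F_1(v_{x',q})+F_2(v_{q,x''})$ since straight segments minimize within each half-plane. It then remains to minimize $g(q)=F_1(v_{x',q})+F_2(v_{q,x''})$ over $q\in E$; writing $q=x+sv$ this is a convex function of $s$ (a sum of Minkowski norms), whose derivative at $s=0$ is $\langle\dot c^-(0),v\rangle_{\dot c^-(0)}^{F_1}-\langle\dot c^+(0),v\rangle_{\dot c^+(0)}^{F_2}=0$ by (\ref{geodesic-equation-crossing-edge}). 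Convexity upgrades this critical point to a global minimum, so the crossing point $x=c(0)$ realizes $d(x',x'')$ and $c(t)$ is minimizing near $t=0$; the global minimality across longer ranges follows from Lemma \ref{lemma-global-minimizing}. The main obstacle is precisely this last step: passing from the first-order (necessary) edge-crossing condition, which only asserts that $x$ is a critical crossing point, to genuine minimality. This is where the convexity of $F_1$ and $F_2$ is essential, and it is exactly what makes the edge-crossing equation a \emph{sufficient}, and not merely necessary, condition. Uniqueness of the whole extension then follows from the uniqueness of $\dot c^+(0)$ established in the first task.
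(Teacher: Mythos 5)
Your proposal is correct and follows essentially the same two-step route as the paper: first, existence and uniqueness of $\dot c^+(0)$ from the strict monotonicity of $u\mapsto\langle u,v\rangle_u^F$ along the half-indicatrix (the paper packages this as a homeomorphism between $\mathcal{S}_{x,1}^-M$ and $\mathcal{S}_{x,2}^+M$ via Lemma \ref{mono-lemma-0} and the implicit function theorem, while you compute the explicit endpoint values $1$ and $-F(-v)$ and apply the intermediate value theorem, using that $F_1=F_2$ on $E$); second, reduction of any competing path to one through a single crossing point $q\in E$, followed by convexity of the crossing-length function to upgrade the first-order edge-crossing condition to genuine minimality. One remark at the level of detail: your claim that a competitor's length is at least $F_1(v_{x',q})+F_2(v_{q,x''})$ silently requires handling paths that cross $E$ several times, which is fixed by merging crossings along $E$ with the triangle inequality --- exactly the ``triangle inequality repeatedly'' step the paper itself invokes, so the two arguments match in both structure and rigor.
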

\begin{proof} We first extend the geodesic $\{c(t),t\in(-\infty,0]\}$ to a unit speed curve for $t\in(-\infty,\infty)$, which is the ray $R_{x,\dot{c}^+(0)}$
when $t>0$, satisfying
(\ref{geodesic-equation-crossing-edge}).

First we need to prove the existence of such an extension, i.e. the existence of $\dot{c}^+(0)$ satisfying (\ref{geodesic-equation-crossing-edge}).
Obviously when $\dot{c}^-(0)$ is tangent to $E$, then $\dot{c}^+(0)=\dot{c}^-(0)$. Implied by Lemma \ref{mono-lemma-0} and the implicit function theorem, the correspondence between $\dot{c}^-(0)$ to $\dot{c}^+(0)$ is a
homeomorphism between $\mathcal{S}_{x,1}^-M$ and $\mathcal{S}_{x,2}^+M$. So for any $\dot{c}^-(0)$, we can find
a unique $\dot{c}^+(0)$, such that the edge-crossing equation (\ref{geodesic-equation-crossing-edge}) is satisfied.

Next we prove such an extension $\{c(t),t\in(-\infty,\infty)\}$ is a geodesic. We only need to prove that it is minimizing from $x'=c(-1)$ to $x''=c(1)$.

Let $v$ be any nonzero vector on $E=C_{x,1}\cap C_{x,2}$.
We claim that any piecewise smooth path from $x'$ to $x''$
passing $x_s$ on $E$ such that $v_{x,x_s}=sv$ has an arc length no
less than $$f(s)=F_1(\dot{c}(0)^-+sv)+F_2(\dot{c}(0)^+-sv).$$
Assume conversely that it is not true. Then we can perturb the path to the union of a sequence of line segments $L_{x',x_1}\cup\mathop{\bigcup}\limits_{i=1}^{n-1} L_{x_i,x_{i+1}}\cup
L_{x_n,x''}$ such that $x_s$ equals some $x_i$, and the total length is still less than $f(s)$. Using the triangle inequality
repeatedly, we get $l(L_{x',x_s})+l(L_{x_s,x''})<f(s)=l(L_{x',x_s})+l(L_{x_s,x''})$, which is a contradiction.

Now we prove $s=0$ is the only minimum point for the smooth function $f(s)$. By Lemma \ref{lemma-geodesic-crossing-edge}, it is a critical point for $f(s)$. On the other hand, easy calculation shows
$f(s)$ is strict convex function, i.e. whenever $s_1\neq s_2$,
\begin{eqnarray*}& &
f(s_1)+f(s_2)\\
&=&(F_1(\dot{c}^-(0)+s_1v)+F_1(\dot{c}^-(0)+s_2v))
+(F_2(\dot{c}^+(0)-s_1v)+F_2(\dot{c}^+(0)-s_2v))\\
&>&2(F_1(\dot{c}^-(0)+\frac{s_1+s_2}{2}v)+
F_2(\dot{c}^+(0)-\frac{s_1+s_2}{2}v))\\
&=&2f(\frac{s_1+s_2}{2}).
\end{eqnarray*}
So $s=0$ is the only critical point as well as the only minimum point of $f(s)$. This proves the curve $R_{x,\dot{c}(0)^-}\cup R_{x,\dot{c}(0)^+}$ is a geodesic from $x'$ to $x''$.

The proof for extending $\{c(t),t\in[0,\infty)\}$ is similar.
\end{proof}

Summarizing the above observations, we have
\begin{theorem}\label{theorem-000}
For any two different points $x'$ and $x''$ in the tangent cone $T_xM$,
where $x$ is an edge point but not a vertex, there exists a unique unit speed geodesic from $x'$ to $x''$.
\end{theorem}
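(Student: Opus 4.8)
The plan is to reduce the statement to the edge-crossing analysis already developed, organized according to the positions of $x'$ and $x''$ relative to the two Minkowski half-planes $C_{x,1},C_{x,2}$ and their common edge line $E=C_{x,1}\cap C_{x,2}$. Since each $\overline{C_{x,i}}$ is a convex half-plane in a Minkowski plane, any geodesic lying entirely in one of them must be a straight segment, and any geodesic is assembled from such segments joined at points of $E$. First I would record the key structural observation that a geodesic crosses $E$ at most once: if a geodesic left one open half-plane at $y_1\in E$ and returned to $E$ at $y_2\in E$, the intervening portion would be a straight segment of the opposite closed half-plane whose two endpoints lie on the line $E$, hence it would be contained in $E$ itself, so no genuine excursion into the opposite interior can return. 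Consequently every geodesic from $x'$ to $x''$ is either a single straight segment contained in one $\overline{C_{x,i}}$, or a two-segment path $L_{x',y}\cup L_{y,x''}$ with a single crossing point $y\in E$ at which the edge-crossing equation \eqref{geodesic-equation-crossing-edge} holds.

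When $x'$ and $x''$ lie in the same closed half-plane, existence is immediate, since the segment $L_{x',x''}$ stays inside by convexity and is a geodesic. For uniqueness here, a single-crossing path issuing from the open half-plane containing $x'$ must terminate in the opposite open half-plane and therefore cannot reach $x''$, so the only candidate is $L_{x',x''}$, which is the unique shortest path in a Minkowski plane (the triangle inequality is strict off collinear configurations). The remaining, and principal, case is $x'\in C_{x,1}$ and $x''\in C_{x,2}$ strictly on opposite sides. Here I would introduce the length function of the crossing point, $g(y)=F_1(v_{x',y})+F_2(v_{y,x''})$ for $y\in E$, which after parametrizing $E$ linearly is exactly the function $f$ appearing in the proof of Lemma \ref{geodesic-extension-crossing-edge}.

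Existence then follows because $g$ is continuous and coercive (each term grows without bound as $y\to\infty$ along $E$ in either direction), so it attains a minimum at some $y_0$; by the derivative computation of Lemma \ref{lemma-geodesic-crossing-edge}, the condition $g'(y_0)=0$ is precisely the edge-crossing equation, so $L_{x',y_0}\cup L_{y_0,x''}$ is a geodesic, its local-minimizing property being supplied by Lemma \ref{geodesic-extension-crossing-edge}. For uniqueness I would invoke the strict convexity of $g$ established by the same convexity estimate as in that lemma; it is strictly convex precisely because $x'$ and $x''$ lie off $E$, so neither $v_{x',y}$ nor $v_{y,x''}$ remains on a single ray as $y$ varies. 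A strictly convex function has at most one critical point, and the crossing point of any crossing geodesic is a critical point of $g$ by the edge-crossing equation; hence the crossing point, and therefore the geodesic, is unique.

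The step I expect to be the main obstacle is the bookkeeping that enforces uniqueness across all cases at once, in particular the observation that a geodesic cannot cross $E$ more than once and that same-side pairs admit no crossing geodesic. Once this classification is secured, the analytic heart, namely existence as the minimum and uniqueness as the sole critical point of the strictly convex coercive $g$, is delivered directly by the convexity arguments of Lemmas \ref{lemma-geodesic-crossing-edge} and \ref{geodesic-extension-crossing-edge}. Reparametrizing the resulting curve to unit speed yields the unique unit-speed geodesic from $x'$ to $x''$.
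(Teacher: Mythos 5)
Your proposal is correct and follows essentially the same route as the paper: reduce to the same-side case (straight segment) and the opposite-side case, where existence comes from coercivity of the crossing-point length function $f(s)=g(y)$ and uniqueness from its strict convexity, with the edge-crossing equation of Lemma \ref{lemma-geodesic-crossing-edge} characterizing the critical point and the argument of Lemma \ref{geodesic-extension-crossing-edge} supplying the minimizing property. Your explicit preliminary classification (a geodesic crosses $E$ at most once, and same-side pairs admit no crossing geodesic) is a welcome detail that the paper leaves implicit, but it does not change the substance of the argument.
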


\begin{proof}
Assume $T_xM=C_{x,1}\cup C_{x,2}$ and the same straight line $E=C_{x,1}\cap C_{x,2}$. Let $v$ be any nonzero vector in the direction of $E$. Let $v$ be any nonzero vector on $E$.
If $x'$ and $x''$ are contained in the same $C_{x,i}$, then the only geodesic
from $x'$ to $x''$ must be the straight line segment $L_{x',x''}$.

Suppose $x'$ and $x''$ are not  contained in the same $C_{x,i}$. Without loss of generality, we assume $x'\in C_{x,1}$ and $x''\in C_{x,2}$. Consider a family of paths from $x'$ to $x''$, given by $L_{x',x_s}\cup
L_{x_s,x''}$, where $x_s$ is a point on $E$ with $v_{x,x_s}=sv$. Its arc length is
$f(s)=F_1(v_{x',x}+sv)+F_2(v_{x,x''}-sv)$. Since
$\mathop{\lim}\limits_{s\rightarrow\pm\infty}f(s)=+\infty$,  it must reach a minimum, at $s=s_0$ for example. Then the argument in Lemma \ref{lemma-geodesic-crossing-edge}
shows the edge-crossing equation (\ref{geodesic-equation-crossing-edge}) is satisfied at $s=s_0$.
Meanwhile, the argument in Lemma \ref{geodesic-extension-crossing-edge} shows $L_{x',x_{s_0}}\cup L_{x_{s_0},x''}$
is the only minimizing geodesic from $x'$ to $x''$.
\end{proof}

The method in the proof of Lemma \ref{geodesic-extension-crossing-edge}  can be applied to discuss the property of a geodesic passing a finite sequence of Minkowski triangles $(T_i,F_i)$ and edges $E_i$.
Let $c(t)$ be a geodesic
$\mathop{\bigcup}\limits_{i=0}^{n-1} L_{x_i,x_{i+1}}$
from $x_0\in (T_0,F_0)$ to $x_{n}\in (T_{n-1},F_{n-1})$, passing the edges $E_i=T_{i-1}\cap T_{i}$, $i=1,\ldots,n-1$, at $x_i\in E_i$ which is not a vertex.
Denote  $u_i=v_{x_{i},x_{i+1}}$ the vector in the Minkowski triangle $T_i$. Let $v_i$ be a nonzero
vector on each $E_i$. Then by Lemma \ref{lemma-geodesic-crossing-edge}, we have
\begin{equation}\label{0000}
\langle \frac{u_i}{F_i(u_i)},v_i\rangle^{F_i}_{u_i}=
\langle \frac{u_{i+1}}{F_{i+1}(u_{i+1})},v_i\rangle^{F_{i+1}}_{u_{i+1}},
\quad \forall i=0,\ldots,n-1.
\end{equation}
For a real parameter $s$, we
consider another sequence of points $\{x'_i\}_{0\leq i\leq n}$, such that $x'_0=x_0$, $x'_{n}=x_{n}$,   $x'_i\in E_i$ for $1\leq i\leq n-1$, and
$v_{x_i,x'_i}=sv_i$ (this requirement determine the interval $I$ of all possible $s$). Connecting all the line segments $L_{x'_i,x'_{i+1}}$, we get another path from $x_0$ to $x_n$, whose
arc length is
$$f(s)=F_0(u_0+sv_1)+\sum_{i=1}^{n-2} F_i(u_i+s(v_{i+1}-v_{i}))+F_{n-1}(u_n-sv_{n-1}).$$
Similar argument as in the proof of Lemma
\ref{geodesic-extension-crossing-edge} shows $f(s)$ is
a strict convex function and $s=0$ is the only critical point of $f$, so we have
\begin{lemma}\label{lemma-2}
Let $\{p_s(t),x_0,x_n\}$ be the path constructed above from $x_0$ to $x_n$. Assume there exists an interval $I$ containing $0$, such that the union of the paths $\{p_s(t),x_0,x_n\}$ defined above
 for $s\in I$ contains no vertices. Then the arc length function $f(s)$ is strictly convex and has the unique critical point at $s=0$, i.e. it strictly increases for $s\geq 0$, and strictly decreases for $s\leq 0$.
\end{lemma}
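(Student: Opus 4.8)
The plan is to follow the template already used in the proof of Lemma~\ref{geodesic-extension-crossing-edge}, establishing two facts about the smooth function $f(s)$ on the interval $I$: that it is strictly convex, and that $s=0$ is a critical point. Once both are in hand, strict convexity forces $f'$ to be strictly increasing, so the single critical point $s=0$ is the unique minimum, and $f$ strictly decreases on $I\cap(-\infty,0]$ and strictly increases on $I\cap[0,\infty)$, which is the assertion. The hypothesis that the swept region $\bigcup_{s\in I}\{p_s(t),x_0,x_n\}$ contains no vertex is used first to guarantee that for every $s\in I$ the perturbed points $x_i'$ remain non-vertex points of the same edges $E_i$, so the combinatorial type of the path is constant and the displayed formula for $f(s)$, with the same norms $F_i$ attached to the same segments, is valid throughout $I$; in particular each argument $u_i+s(v_{i+1}-v_i)$, as well as the two boundary arguments, stays nonzero, so every $F_i$ is evaluated where it is smooth.

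For strict convexity I would argue summand by summand. Each term has the form $s\mapsto F_i(a+sb)$ for fixed $a$ and $b$; by the triangle inequality for the Minkowski norm $F_i$ (equivalently, midpoint convexity of its unit ball) every such term is convex, and it is \emph{strictly} convex precisely when the affine line $s\mapsto a+sb$ avoids the origin, i.e. when $a$ and $b$ are linearly independent. For the first term $F_0(u_0+sv_1)$ this holds: the segment $L_{x_0,x_1}$ crosses the edge $E_1$ at the non-vertex point $x_1$, so (being in the crossing case of Section 4.2 rather than lying along the edge) its direction $u_0$ is not parallel to the edge direction $v_1$, that is $u_0\not\parallel v_1$. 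Hence the first summand is strictly convex while all remaining summands are at least convex, and the sum $f$ is strictly convex on $I$; this is exactly the midpoint estimate carried out in Lemma~\ref{geodesic-extension-crossing-edge}, now with one strict summand.

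It remains to show $f'(0)=0$. Differentiating term by term and using $\tfrac{d}{ds}F_i(a+sb)\big|_{s=0}=\langle \tfrac{a}{F_i(a)},b\rangle^{F_i}_{a}$, one obtains
\[
f'(0)=\langle \tfrac{u_0}{F_0(u_0)},v_1\rangle^{F_0}_{u_0}
+\sum_{i=1}^{n-2}\langle \tfrac{u_i}{F_i(u_i)},v_{i+1}-v_i\rangle^{F_i}_{u_i}
-\langle \tfrac{u_{n-1}}{F_{n-1}(u_{n-1})},v_{n-1}\rangle^{F_{n-1}}_{u_{n-1}}.
\]
I would then regroup this by the edge vectors $v_j$. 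For each $j=1,\dots,n-1$ the coefficient of $v_j$ is the difference of the two one-sided contributions at the crossing point $x_j$, namely $\langle \tfrac{u_{j-1}}{F_{j-1}(u_{j-1})},v_j\rangle^{F_{j-1}}_{u_{j-1}}-\langle \tfrac{u_j}{F_j(u_j)},v_j\rangle^{F_j}_{u_j}$, which vanishes by the edge-crossing equation (\ref{0000}) (Lemma~\ref{lemma-geodesic-crossing-edge}) applied at $x_j$. Thus every grouped term cancels and $f'(0)=0$, completing the argument by the strict-convexity-plus-single-critical-point principle.

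The computations themselves are routine, so the only real care is bookkeeping: matching the off-diagonal contributions $\langle\cdot,v_{i+1}\rangle$ and $\langle\cdot,v_i\rangle$ coming from adjacent summands into a single factor of $v_j$, and treating the boundary indices $j=1$ and $j=n-1$ separately so that the telescoping closes up against the standalone first and last terms of $f'(0)$. This index alignment, together with confirming that the transversality $u_0\not\parallel v_1$ (hence strictness of at least one summand) genuinely holds at an honest crossing, is the main — though modest — obstacle; everything else is a direct repetition of the estimates in Lemmas~\ref{lemma-geodesic-crossing-edge} and \ref{geodesic-extension-crossing-edge}.
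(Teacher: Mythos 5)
Your proposal is correct and takes essentially the same route as the paper, whose entire proof is the remark that ``similar argument as in the proof of Lemma \ref{geodesic-extension-crossing-edge}'' gives strict convexity of $f$ (via the strict triangle inequality for Minkowski norms) and $f'(0)=0$ (via the edge-crossing equations (\ref{0000}), which in your version telescope across the grouped coefficients of each $v_j$). Your additional care---observing that the middle summands $F_i(u_i+s(v_{i+1}-v_i))$ may be only convex while the first summand is strictly convex because $u_0\not\parallel v_1$ at a transversal crossing---just fills in details the paper leaves implicit.
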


\subsection{The Landsberg  and Berwald conditions}

The edge-crossing equation (\ref{geodesic-equation-crossing-edge})
in Lemma \ref{lemma-geodesic-crossing-edge}
defines a continuous relation between $\dot{c}^-(0)$ and $\dot{c}^+(0)$.
By Lemma \ref{mono-lemma-0} it defines a homeomorphism between $\mathcal{S}_{x,1}^-M$ and
$\mathcal{S}_{x,2}^+M$. By the implicit function theorem, it is in fact
a diffeomorphism when $\dot{c}^\pm(0)$ are not parallel to $E$.

The edge-crossing equation can also be used to define some special types
of piecewise flat Finsler surfaces. In particular, we can define the notions of \textit{Landsberg} and \textit{Berwald} surfaces in this setting.
\begin{definition}\label{define-landsberg-berwald}
 A piecewise flat Finsler surface $M$ is   called  a piecewise flat Landsberg surface  if for any $x$ on an edge $E$ between two Minkowski triangles $(T_1,F_1)$ and
$(T_2,F_2)$ in $M$, which is not a vertex, the homeomorphism between
$u_1^-\in\mathcal{S}_{x,1}^-M$ and $u_2^+\in\mathcal{S}_{x,2}^+M$ defined by
$$\langle u_1^-,v\rangle^{F_1}_{u_1^-}=\langle u_2^+,v\rangle^{F_2}_{u_2^+},$$
where $v$ is any nonzero tangent vector in the direction of $E$,
is an isometry with respect to the metrics on the indicatrices defined by the Hessian matrices. It is called  a piecewise flat Berwald surface if the homeomorphism between $u_1^-$ and $u_2^+$
is induced by a linear isomorphism between the tangent spaces $T_x(T_1)$ and
$T_x(T_2)$.
\end{definition}

Notice that the positions of $(T_1,F_1)$ and $(T_2,F_2)$ can be switched, so at any edge point $x$ described in Definition \ref{define-landsberg-berwald}, the Landsberg condition gives an isometry between
$\mathcal{S}_x(T_1)=\mathcal{S}_{x,1}^-M\cup\mathcal{S}_{x,1}^+M$
and $\mathcal{S}_x(T_2)=\mathcal{S}_{x,2}^+M\cup\mathcal{S}_{x,2}^-M$ ( hence also between
$T_x(T_1)\backslash \{0\}$ and
$T_x(T_2)\backslash \{0\}$ ) with respect to the metrics defined by the Hessian matrices. So the Hessian matrices define the same metrics at all smooth points (i.e. the complement of all edges and vertices) of $M$ when $M$ is connected. This is a key property of smooth Landsberg Finsler spaces. When $M$ is Berwald, the non-vertex edge points are smooth points as well. Since the isometry between $T_x(T_1)\backslash \{0\}$ and
$T_x(T_2)\backslash \{0\}$ is  linear,  $(T_1,F_1)$ and $(T_2,F_2)$ can be isometrically embedded in the same Minkowski plane, as
two triangles with a common edge. So the complement of all vertices in $M$ is
a smooth incomplete Berwald surface.

Examples of piecewise flat Berwald surfaces include piecewise flat Riemannian surfaces. However, there are also many Non-Riemannian examples of Berwald type. For example, given a  piecewise flat Riemannian surface, such that all the angles in
the triangulation is an integer multiple of $\pi/n$ for some $n\in\mathbb{N}$, we can replace the Euclidean norms on the triangles  with
 non-Riemannian $D_{2n}$-invariant Minkowski norms (here $D_{2n}$ is the dihedral group),  which makes $M$
 a non-Riemannian piecewise flat Finsler surface of Berwald type.
Till now, we do not know whether any Landsberg surface $M$ is Berwald, i.e., whether the answer to the combinational version of the $2$-dimensional Landsberg problem (the unicorn problem) is positive.

\section{Geodesics in the tangent cone $T_xM$ for a vertex $x$}

In this section, we study the local behavior of a geodesic near a vertex $x$,
so the discussion can be restricted to the tangent cone $T_xM$.
We will use the following lemma.

\begin{lemma} \label{lemma-reversible-convexity}
Let $(\mathbf{V},F)$ be a real vector space endowed with
a Minkowski norm, and  $u$, $v$ and $w$ be $F$-unit vectors such that $u$ is a non-negative linear combination of $v$ and $w$ and one of the following two conditions:
\begin{description}
\item{\rm (1)} $F$ is reversible, i.e. $F(y)=F(-y)$ for all $y\in\mathbf{V}$.
\item{\rm (2)}
$\langle v,w\rangle_w^F\geq 0$.
\end{description}
Then we have $\langle u,w\rangle^F_w\geq\langle v,w\rangle_w^F$, where the
equality holds if and only if $u=v$.
\end{lemma}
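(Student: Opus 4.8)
The plan is to exploit the fact that, once the reference vector in the inner product is \emph{fixed} to be $w$, the quantity $\langle\cdot,w\rangle_w^F$ is merely a fixed linear functional attached to the single Euclidean structure $g:=\langle\cdot,\cdot\rangle_w^F$. First I would reduce to $\dim\mathbf{V}=2$: since $u=\alpha v+\beta w$ with $\alpha,\beta\ge 0$, all three vectors lie in $\mathbf{P}=\mathrm{span}\{v,w\}$, and because the Hessian of $F|_{\mathbf{P}}$ at $w$ is the restriction of $g$ to $\mathbf{P}$, nothing is lost by passing to $\mathbf{P}$. The case where $v$ and $w$ are linearly dependent (so $v=\pm w$) is degenerate and can be checked by hand; hence I assume $v,w$ are independent and $\mathbf{P}$ is a Minkowski plane whose indicatrix $\mathcal{S}^F$ is a smooth strictly convex closed curve. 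Writing $\ell(y)=\langle y,w\rangle_w^F=g(y,w)$, the goal becomes $\ell(u)\ge\ell(v)$, with equality iff $u=v$.

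The geometric input I would use is the elementary fact that the tangent line of the convex body $\{F\le 1\}$ at the boundary point $w$ is exactly $\{\ell=1\}$ (this is the fundamental inequality $\langle y,w\rangle_w^F\le F(y)F(w)$, equality iff $y$ is a non-negative multiple of $w$). Consequently $\ell$ attains its unique maximum on $\mathcal{S}^F$ at $w$, with value $\ell(w)=1$. Since $\mathcal{S}^F$ is smooth and strictly convex, the tangent direction turns monotonically along it, so $\ell$ has exactly two critical points on $\mathcal{S}^F$: the maximum $w$ and a unique minimum $w'$; and since the zero line $\{\ell=0\}$ passes through the interior point $0$, the curve meets both open half-planes, so $\ell(w')<0$. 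On each of the two arcs of $\mathcal{S}^F$ joining $w'$ to $w$, the functional $\ell$ is therefore strictly monotonic.

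The vectors $v,w$ cut $\mathcal{S}^F$ into two arcs, and $u$, being a non-negative combination of $v$ and $w$, lies on the arc $A$ contained in the convex cone $\{av+bw:a,b\ge 0\}$, strictly between $v$ and $w$. The crux is to show that $A$ avoids the minimum point $w'$: granting this, $\ell$ is strictly monotonic on $A$, and because $\ell(w)=1$ is the global maximum it must be strictly increasing from $v$ to $w$; as $u$ lies between them this yields $\ell(v)\le\ell(u)\le 1$, with equality iff $u=v$, which is precisely the claim. The two hypotheses are exactly what guarantee $w'\notin A$. Under condition (2), if $w'$ were in the open cone, say $w'=av+bw$ with $a,b>0$, then $\ell(w')=a\langle v,w\rangle_w^F+b\ge 0$, contradicting $\ell(w')<0$. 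Under condition (1), reversibility makes $F^2$ even, so $\langle\cdot,\cdot\rangle_{-w}^F=\langle\cdot,\cdot\rangle_{w}^F$ and the supporting line of $\{F\le 1\}$ at $-w$ is $\{\ell=-1\}$, forcing $w'=-w$; but $-w$ can lie in the cone only if $v$ is a negative multiple of $w$, i.e.\ in the excluded degenerate case, so again $w'\notin A$.

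I expect the main obstacle to be this last step of pinning down that the relevant arc $A$ avoids the minimizer $w'$, for this is exactly where conditions (1) and (2) are indispensable: without one of them $v$ could sit on the far side of $w'$, the monotonicity of $\ell$ along the cone arc would break, and the inequality itself could fail. The remaining ingredients — strict convexity of the indicatrix, the tangent-line (fundamental inequality) characterization of the maximizer, and strict unimodality of a linear functional on a strictly convex curve — are standard and only need to be assembled carefully, together with the routine bookkeeping for the degenerate collinear configurations.
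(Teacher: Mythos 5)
Your proposal is correct and follows essentially the same route as the paper: both reduce to $\dim\mathbf{V}=2$ and study the fixed linear functional $\ell(\cdot)=\langle\cdot,w\rangle_w^F$ on the smooth strictly convex indicatrix, using the tangent-line characterization of the maximizer $w$ and (in the reversible case) the parallel tangent at $-w$ to get strict monotonicity of $\ell$ along the arc from $v$ to $w$. Your explicit critical-point count (unique minimizer $w'$ with $\ell(w')<0$) and the verification that the cone arc avoids $w'$ is just a slightly more detailed packaging of the paper's argument, and in fact makes the paper's terse remark for condition (2) fully transparent.
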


\begin{proof}
We first assume $F$ is reversible.
Similarly as in the proof of Lemma \ref{lemma-convexity}, we can assume that $\dim\mathbf{V}=2$. In the cases that $u=\pm w$, $v=\pm w$ or $u=v$, the lemma is obvious.
So we may assume that $u$, $v$, $w$ and $-w$ are distinct (in this case $u$
is a positive linear combination of $v$ and $w$). Then we only need to prove
$\langle u-v,w\rangle^F_w>0$. The
indicatrix $\mathcal{S}$ in $\mathbf{V}$ is a smooth circle which bounds a
strictly convex domain.
The vectors $u$ and $v$ are contained in the same connected component of $\mathcal{S}\backslash\{\pm w\}$, which can be parametrized by its arc length
as $w(t)$ with $w(0)=-w$, $w(t_1)=v$, $w(t_2)=u$, and $w(t_3)=w$. Since  $u$
is a positive linear combination of $v$ and $w$, we have $0<t_1<t_2<t_3$. Since $F$ is reversible, the tangle lines of $\mathcal{S}$ at $\pm w$ are parallel. So
from the strong convexity of $\mathcal{S}$, we have $\langle w'(t),w\rangle^F_w>0$ for all $t\in (0,t_3)$, that is,  $f(t)=\langle w(t),w\rangle^F_w$ is strictly increasing when $t\in [0,t_3]$,
completing  the proof of the lemma.

This proof is still valid when the reversibility condition is replaced by $\langle v,w\rangle_w^F\geq 0$,
because $\langle w'(t),w\rangle_w^F>0$ for
$t\in (t_1,t_3)$ though it may fail for $t\in (0,t_1)$.
\end{proof}

The main theorem of this section is the following.

\begin{theorem}\label{lemma-geodesic-not-passing-vertex-x}
Let $M$ be a piecewise flat Finsler surface and $T_xM$ the tangent cone of a vertex $x\in M$.
 Let  $c(t)$ be a geodesic in $T_xM$. Suppose that there exists $x'=c(t')\neq x$, such that the tangent vector $\dot{c}^+(t')$ or $\dot{c}^-(t')$
at $x'$ is not in the ray at $x$. Then $c(t)$ can be uniquely extended to a geodesic defined on $(-\infty,\infty)$,  and it  is a union of finite line segments and two rays as the following: \begin{equation}\label{decomposition-of-geodesic}
\{c(t),t\in(-\infty,\infty)\}=
R_{x_1,v_1^-}\cup(\bigcup_{i=1}^{m-1} L_{x_i,x_{i+1}})\cup R_{x_m,v_m^+},
\end{equation}
where $R_{x_1,v_1^-}$, $L_{x_1,x_2}$, $\ldots$, $L_{x_{m-1},x_m}$ and
$R_{x_m,v_m^+}$ are contained in the Minkowski cones $C_{x,0}$, $\ldots$, $C_{x,m}$ respectively, and each $x_i\neq x$ is contained in the edge
$E_{i}=C_{x,i-1}\cap C_{x,i}$. In particular,  the geodesic
$\{c(t),t\in(-\infty,\infty)\}$ does not pass $x$, and neither of the line segments and rays in (\ref{decomposition-of-geodesic}) is contained in an edge.
\end{theorem}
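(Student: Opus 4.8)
The plan is to construct the decomposition (\ref{decomposition-of-geodesic}) by extending $c(t)$ one segment at a time, and then to prove that only finitely many edges are crossed. First I would reduce the statement to two claims: (A) the maximally extended geodesic never reaches $x$ and no piece of it lies inside an edge; and (B) it crosses only finitely many edges. Granting (A) and (B), the extension is routine: inside each cone $C_{x,i}$ the geodesic is a straight segment, and whenever it meets an edge the meeting point is not a vertex (it is $\neq x$ by (A)) and the tangent is not along the edge (again (A)), so Lemma \ref{geodesic-extension-crossing-edge} extends it uniquely across that edge; by (B) the process terminates after finitely many crossings, and the two extreme pieces are rays running to infinity inside their cones, which is exactly (\ref{decomposition-of-geodesic}).

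For (A) I would use the non-radial hypothesis together with Lemma \ref{lemma-global-minimizing}. If $c(t)$ passed through $x$, then near that parameter it would consist of a radial segment into $x$ followed by a radial segment out of $x$; but a radial ray from $x$ stays inside its cone forever (it can meet an edge only by being that edge), so the maximally extended geodesic would coincide with a pair of opposite rays at $x$ and be radial everywhere, contradicting the non-radial tangent $\dot c^{\pm}(t')$. The same mechanism excludes a piece lying in an edge: travelling along an edge is radial and, continued toward $x$, reaches the vertex, reducing to the previous case. Hence every $x_i\neq x$ and no segment or ray is contained in an edge.

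The heart of the proof is (B): ruling out infinite winding around $x$. I would first organize the radial behaviour via $r(t)=F(v_{x,c(t)})$. On each straight segment $r$ is convex, being a convex Minkowski norm precomposed with an affine map. At a crossing point $x_i$ the radial vector $v_{x,x_i}$ points \emph{along} the edge $E_i$, so the one-sided radial derivatives are precisely $\langle\dot c^{\mp},\hat v_{E_i}\rangle_{\dot c^{\mp}}$ with $\hat v_{E_i}$ the unit edge vector; the edge-crossing equation (\ref{geodesic-equation-crossing-edge}) makes these two values equal, so $r$ is in fact $C^1$ across every crossing. Being convex on each segment and $C^1$ at the joints, $r$ is globally convex, hence unimodal with a single perigee, and $r(t)\to\infty$ at both ends. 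This already shows $c(t)$ cannot spiral into $x$ and that the two extreme pieces are genuine rays to infinity; what remains is to bound the total angle $\sphericalangle_x^{\pm}$ swept, since a convex increasing $r$ is by itself still compatible with an outward logarithmic-type spiral.

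Bounding the total turning is the main obstacle, because a global developing map of $T_xM$ into a single Minkowski plane — which would turn $c$ into a straight line of total angle below a half-turn — exists only in the Berwald case; in the reversible and Landsberg cases Lemma \ref{lemma-reversible-convexity} supplies a monotone angular quantity that can be propagated across the cyclically arranged cones, giving the shortcut mentioned in the introduction. In general I would argue by contradiction and rescaling: if $c$ wound around infinitely as $t\to+\infty$ then $r(t)\to\infty$ monotonically, and rescaling the $k$-th loop by $1/r_k$ (positive scalar multiplications preserve $T_xM$ and send geodesics to geodesics, as in Lemma \ref{lemma-global-minimizing}) produces geodesic arcs that wind once around $x$ at unit radial distance. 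Using the compactness of the finitely many indicatrices — and the strict convexity recorded in Lemma \ref{lemma-convexity} and Lemma \ref{lemma-2} to keep the arcs from degenerating — I would extract a limiting geodesic that closes up into a loop encircling $x$; but the radial function of such a loop would have to be simultaneously periodic and convex, hence constant, which is impossible since geodesics are straight inside each cone. This contradiction yields (B). Making the rescaling limit and the closing-up rigorous in the absence of any developing map is precisely the lengthy part, and is where the general case genuinely departs from the Riemannian and Berwald pictures.
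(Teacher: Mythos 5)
Your reduction to the two claims, and your argument for claim (A) (no passage through $x$, no pieces inside edges, unique continuation across each crossing via Lemma \ref{geodesic-extension-crossing-edge}), are sound and match the paper's transversality observation. But the central mechanism you propose for (B) contains a genuine error. By Euler's theorem, the derivative of $r(t)=F_i(c(t)-x)$ along a segment is $\langle \hat v,\dot c(t)\rangle^{F_i}_{\hat v}$, where $\hat v$ is the \emph{unit radial vector}: the base point of the fundamental form is the radial direction, not $\dot c$. At a crossing point $x_i$ the radial direction lies along $E_i$, so the one-sided derivatives of $r$ are $\langle \hat v_i,\dot c^{-}(t_i)\rangle^{F_{i-1}}_{\hat v_i}$ and $\langle \hat v_i,\dot c^{+}(t_i)\rangle^{F_{i}}_{\hat v_i}$, whereas the edge-crossing equation (\ref{geodesic-equation-crossing-edge}) equates the \emph{different} pairing $\langle \dot c^{-}(t_i),v\rangle^{F_{i-1}}_{\dot c^{-}(t_i)}=\langle \dot c^{+}(t_i),v\rangle^{F_{i}}_{\dot c^{+}(t_i)}$. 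For a non-quadratic Minkowski norm $\langle y,w\rangle_y\neq\langle w,y\rangle_w$ in general (they coincide in the Riemannian case, which is why your picture is correct there); moreover $F_{i-1}$ and $F_i$ agree only along the edge itself, so their Hessians at $\hat v_i$ differ, and nothing in the geodesic equations forces the one-sided derivatives of $r$ to match. Hence $r$ need not be $C^1$ at crossings, global convexity fails, and with it fall the unimodality of $r$, the monotone divergence $r(t)\to\infty$, and your final contradiction (``periodic and convex, hence constant''): a closed limit loop whose radial function is convex on each segment but has downward derivative jumps at the crossings is not excluded by anything you have proved. This is exactly the point at which the paper remarks that the statement is obvious for piecewise flat Riemannian surfaces but not easy in the Finsler context.

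Your rescaling-compactness extraction of a limiting closed loop is also only a sketch (non-degeneration of the rescaled loops and the actual closing-up are not established, and both lean on the broken convexity), but the deeper issue is that the paper avoids any limiting process altogether. It considers the first-return angle map $\theta'\mapsto\theta''$ and, assuming the strict decrease $\theta''<\theta'$ fails, uses the intermediate value theorem to produce a geodesic whose crossing data is \emph{exactly} periodic, $u_{i+n}=u_i$. It then refutes this configuration using the pairings $\langle u_i,v_i\rangle^{F_i}_{u_i}$ based at the geodesic directions $u_i$ --- precisely the quantities that the edge-crossing equation does transport, via $\langle u_i,v_{i+1}\rangle^{F_i}_{u_i}=\langle u_{i+1},v_{i+1}\rangle^{F_{i+1}}_{u_{i+1}}$ --- by choosing the index maximizing $\langle u_i,v_i\rangle^{F_i}_{u_i}$ (or, after the sign dichotomy, minimizing $\langle u_i,v'_i\rangle^{F_i}_{u_i}$ for the reversed edge vectors) and invoking the convexity Lemma \ref{lemma-reversible-convexity}; a separate bootstrap on the set of good initial angles then upgrades the strict decrease to Assertion (A). To repair your argument you would have to abandon the radial function as the monotone quantity and track these Legendre pairings instead, which in effect means adopting the paper's Assertion (B).
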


\begin{proof}
We first extend the geodesic $c(t)$ in the positive direction.
Within the same Minkowski cone as $x'=c(t')$, the geodesic $c(t)$ intersects
the line segment $L_{x,c(t)}$ transversally at each $c(t)$. The intersection of $c(t)$
with the edge is not $x$. After crossing the edge, by Lemma \ref{geodesic-extension-crossing-edge}, $c(t)$ still intersects the line segment
$L_{x,c(t)}$ transversally. So whenever the geodesic $c(t)$ can be extended, it does not pass $x$.

The proof of the theorem is reduced to the following key observation:

\medskip
{\bf Assertion (A).}\quad  The geodesic $\{c(t),t\in[t',+\infty)\}$ only intersects the edges in $T_xM$ for finitely many times when $t$ increases.

\medskip
Now we switch to prove the Assertion (A).

Suppose  there are exactly $n$ different Minkowski cones and $n$ different edges in $T_xM$. We denote them as $(C_{x,i},F_i)$ and $E_i=C_{x,i-1}\cap C_{x,i}$ for $i\in\mathbb{Z}$, and $(C_{x,i},F_i)=(C_{x,i+n},F_{i+n})$ and $E_i=E_{i+n}$ for all $i$. Let $v_{i}$ be the unit vector on the edge $E_{i}$ in the direction from $x$ to $x_i$ (we also have $v_i=v_{i+n}$ for all $i$).

Without loss of generality,
we may assume that
$x'=c(t')\in C_{x,0}\backslash (E_0\cup E_1)$ and $c(t)$ intersects $E_1$ when $t>t'$. Then with $t$ increasing,
the edges $c(t)$ intersects are sequentially $E_1$, $E_2$, $E_3$ and so on. Denote $\theta'=\angle_{x'}^+(v_1,\dot{c}(t'))$. The range of $\theta'$ is
$$0\leq\theta'<\alpha=
\angle_{x'}^+(v_1,v_{x',x}),$$ where the supremum corresponds
to the line segment $L_{x',x}$.

When $\theta$ is small, the extension of the geodesic $c(t)$ can not intersect $E_1$ again. Assersion (A) is obviously true for this case.

When $\theta$ is big enough such that $c(t)$ intersect $E_1$ again, we can find $x''=c(t'')\in C_{x,0}\backslash(E_0\cup E_1)$  from the first return of $c(t)$ back to $C_{x,0}$. Then we define
$\theta''=\angle_{x''}^+(v_1,\dot{c}(t''))$, and claim

\medskip
{\bf Assertion (B).}\quad  $\theta''<\theta'$.

\medskip

Assuming Assertion (B), we can prove Assertion (A) as following. Let $\mathcal{A}$ be the subset of $[0,\alpha)$
defined by all the number $a$ such that whenever the geodesic $\{c(t),t\in[t',+\infty)\}$ with
$c(t')=x'\in C_{x,0}\backslash(E_0\cup E_1)$ satisfies $\theta'<a$, it intersects only finite edges. It is easy to see $\mathcal{A}$ is an interval containing 0. Using the edge-crossing equation repeatedly, it is not hard to observe that, for any geodesic $\{c(t),t\in[t',+\infty)\}$ with only finite intersections with the edges, we can change it by increasing its $\theta'$ a little bit, such that the number counting the intersection with edges remain the same or increase by 1, i.e. still finite. So $\mathcal{A}=[0,\beta)$ for some $\beta\in(0,\alpha]$. If $\beta<\alpha$, by
Assertion (B), $\beta$ is also contained in $\mathcal{A}$, which is a contradiction. So $\mathcal{A}=[0,\alpha)$, and
Assertion (A) is proved.

Now we only need to prove Assertion (B).

By the edge-crossing equation (\ref{geodesic-equation-crossing-edge}), $\theta''$ only depends on $\theta'$ rather than $x'=c(t')$. For $\theta''$ to exist, $\theta'$ must be taken from an open interval $(\gamma,\alpha)$,
where $\gamma>0$ corresponds to a geodesic $c(t)$ with $t\geq t'$
which ends with a ray parallel to $E_1$ in $C_{x,0}$. When $\theta'\in(\beta,\alpha)$ is sufficiently close to $\gamma$,
$\theta''=\angle_{x''}^+(v_1,\dot{c}(t''))$ can be arbitrarily close to 0.

So if we assume conversely that Assertion (B) is not true, then
by the intermediate value theorem, there exist a geodesic $c(t)$ satisfying $\theta'=\theta''$, and then $\dot{c}(t')=\dot{c}(t'')\theta'$. We denote $\{x_i\in E_i, 1\leq i\leq n+1\}$ the first $n+1$
intersection points of $\{c(t),t\in[t',+\infty)\}$ with the edges, and $u_i=v_{x_i,x_{i+1}}/F_{i}(v_{x_i,x_{i+1}})$ for $0<i\leq n$ the $F_{i}$-unit vector in the direction from $x_i$ to $x_{i+1}$. The assumption $\theta''=\theta'$ implies
$u_0=v_{x',x_1}/F_0(v_{x',x_1})=u_n$ and $u_{i+n}=u_i$ can also be defined.

By the periodicity, there exists an $i$ with the biggest
$\langle u_i,v_i\rangle_{u_i}^{F_i}$. If $\langle u_i,v_i\rangle_{u_i}^{F_i}\geq 0$, then
$$\langle u_i,v_{i+1}\rangle_{u_i}^{F_i}=\langle u_{i+1},v_{i+1}\rangle_{u_{i+1}}^{F_{i+1}}\leq\langle u_i,v_i\rangle_{u_i}^{F_i},$$
which is a contradiction with Lemma \ref{lemma-reversible-convexity} because $v_{i+1}$ is
a positive linear combination of $v_i$ and $u_i$.
So we have $\langle u_i,v_i\rangle_{u_i}^{F_i}<0$ for each $i$.

Denote $v'_i$ the unit vector in the opposite direction of $v_i$, i.e. $v_i=c_iv'_i$ for some $c_i<0$ and
$F_i(v'_i)=F_{i-1}(v'_i)=1$. Then for each $i$, $v'_i$ is a positive linear combination of $v'_{i+1}$ and $u_i$, and
$\langle u_i,v'_i\rangle_{u_i}^{F_i}>0$ for each $i$. We can choose $i$ with the smallest $\langle u_i,v'_i\rangle_{u_i}^{F_i}$, then
$$\langle u_i,v'_{i+1}\rangle_{u_i}^{F_i}=\langle u_{i+1},v'_{i+1}\rangle_{u_{i+1}}^{F_{i+1}}\geq\langle u_i,v'_i\rangle_{u_i}^{F_i}.$$
This is a contradiction with Lemma \ref{lemma-reversible-convexity}, which ends the proof of Assertion (B).

To summarize, the extension for the geodesic in the positive direction can only intersect the edges for finitely many times, it will provide a geodesic
for $t\in[t_0,\infty)$, which contains finite many line segments and a ray.

For the extension of the geodesic in the negative direction, the argument is similar. To summarize, in $T_xM$, $c(t)$ can
be extended to a geodesic for $t\in(-\infty,\infty)$,
not passing $x$, and can be presented as the union of two rays
and finitely many Line segments.
\end{proof}

Using Theorem \ref{lemma-geodesic-not-passing-vertex-x},
we can provide an explicit description of all maximally extended geodesics in
$T_xM$ of a vertex $x$.

\begin{corollary} \label{cor-1}
Let $M$ be a piecewise flat Finsler surface and $T_xM$ be the tangent cone of a vertex $x\in M$. Then
a maximally extended geodesic in $T_xM$ must be one of the following:
\begin{description}
\item{\rm (1)} A geodesic described in Theorem \ref{lemma-geodesic-not-passing-vertex-x}.
\item{\rm (2)} A ray $R_{x,v^\pm}$, where $v^\pm$ is a nonzero incoming or outgoing tangent vector at $x$.
\item{\rm (3)} The union of two rays $R_{x,v^-}\cup R_{x,v^+}$, where $v^-$
and $v^+$ are nonzero incoming and outgoing tangent vectors at $x$ respectively.
\end{description}
\end{corollary}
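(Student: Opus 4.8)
The plan is to dichotomize the given maximally extended geodesic $c(t)$ in $T_xM$ according to whether the hypothesis of Theorem \ref{lemma-geodesic-not-passing-vertex-x} is satisfied. First suppose there is a point $x'=c(t')\neq x$ at which one of the one-sided tangent vectors $\dot{c}^+(t')$ or $\dot{c}^-(t')$ fails to lie in the ray at $x$. Then Theorem \ref{lemma-geodesic-not-passing-vertex-x} applies directly: $c$ extends uniquely to a geodesic on $(-\infty,\infty)$ of the form (\ref{decomposition-of-geodesic}), which is already maximal (two rays running to infinity) and never meets $x$. Since extension of geodesics away from $x$ is unique (by Lemma \ref{geodesic-extension-crossing-edge} across edges, and by straightness inside cones), and this extension never reaches $x$, the maximally extended $c$ must coincide with it. This gives case (1).

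It remains to treat the complementary situation, in which for every $t$ with $c(t)\neq x$ both $\dot{c}^+(t)$ and $\dot{c}^-(t)$ point in the radial direction along the ray joining $x$ to $c(t)$. I would first show that such a radial geodesic is confined to rays emanating from $x$: if $c(t)$ lies in the interior of a cone $C_{x,i}$, then the straight segment through $c(t)$ lies on the line through $x$ and $c(t)$, and that line meets the edges of $C_{x,i}$ only at $x$, since an edge is a ray from $x$ and cannot contain two points of the line without equalling it (which would force $c(t)$ onto the edge). Hence no transversal edge-crossing occurs; the segment reaches $x$ at its inner end and runs to infinity at its outer end. If instead $c(t)$ lies on an edge, the radial direction is along that edge, which is itself a ray from $x$. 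In either case the distance $F(v_{x,c(t)})$ to $x$ varies monotonically. Consequently this distance is either strictly monotone along the whole geodesic, yielding a single outgoing ray $R_{x,v^+}$ or incoming ray $R_{x,v^-}$ (case (2)), or it decreases to $0$ and then increases, the unique zero occurring at some $t_0$ with $c(t_0)=x$, so that $c$ enters along an incoming ray and leaves along an outgoing ray, giving $R_{x,v^-}\cup R_{x,v^+}$ (case (3)). The geodesic cannot meet $x$ twice, since after leaving $x$ it recedes monotonically to infinity and cannot return.

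The step I expect to require the most care is the complementary radial case: one must argue carefully that a geodesic whose velocity is everywhere radial cannot cross an edge transversally, so that its image is contained in rays from $x$, and that the induced distance to $x$ is genuinely monotone except for a single possible minimum at $x$. The first case is essentially immediate once Theorem \ref{lemma-geodesic-not-passing-vertex-x} and the uniqueness of geodesic extension away from $x$ are invoked.
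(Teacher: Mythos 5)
Your proof is correct and takes essentially the same route as the paper: the paper states Corollary \ref{cor-1} without a separate proof, as an immediate consequence of Theorem \ref{lemma-geodesic-not-passing-vertex-x}, and your dichotomy (the theorem's hypothesis holds, giving case (1); otherwise the geodesic is everywhere radial, hence confined to rays at $x$, giving case (2) or (3) according to whether it meets $x$) is exactly the implicit argument. Your additional care in the radial case --- that a radial geodesic cannot cross an edge transversally and cannot reverse direction at a point other than $x$ (the latter following from local minimization and strict convexity of the Minkowski norms) --- soundly fills in details the paper leaves unstated.
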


The geodesics in (1) can be easily distinguished from those in (2) and (3). The crucial question is how to distinguish the geodesics in the cases (2) and (3).

\begin{question}\label{question}
Let $M$ be a piecewise flat Finsler surface and $T_xM$  the tangent cone of a vertex $x\in M$.
Let $\{c(t),t\in(-\infty,0]\}$ or $\{c(t),t\in[0,+\infty)\}$ be a unit speed geodesic in $T_xM$ with $c(0)=x$. Under what condition can one extend it to a geodesic defined on the whole $(-\infty,\infty)$?
\end{question}

Before answering this question in the next section, some preparation is needed here. We now consider several different types perturbations of a geodesic passing the vertex $x$, which are important for answering Question \ref{question} as well as defining curvature.

We first define the perturbations of $\{c(t),t\in(-\infty,0]\}\subset T_xM$ with $c(0)=x$, which are geodesics in (1) of Corollary \ref{cor-1}.
For $x_0=c(-1)$, we denote $\mathcal{S}_{x_0}^{+,l}$  and $\mathcal{S}_{x_0}^{+,r}$ the two connected components of $\mathcal{S}_{x_0}^+(T_xM)
\backslash\mathbb{R}\dot{c}(-1)$.  The superscript $l$ or $r$ means \textit{left side} or \textit{right side}, respectively. Usually we can take an orientation on $T_xM$
(i.e. a local orientation at $x\in M$) to specify the \textit{left} and \textit{right} sides of the geodesic $c(t)$
at $x_0$. If not specified, the choice of the local orientations will have no impact on later discussions or definitions.

Given a  unit outgoing tangent vector $w^+\in\mathcal{S}_{x_0}^{+,l}$
(or $w^+\in\mathcal{S}_{x_0}^{+,r}$) at $x_0$, we can define a unit speed geodesic
$\{c_{x_0,w^+}(t),t\in[-1,\infty)\}$ with $c_{x_0,w^+}(-1)=x_0$ and
$\dot{c}_{x_0,w^+}^+(-1)=w^+$. Notice that such a  geodesic can be extended to one with $t\in(-\infty,\infty)$, which is a geodesic not passing $x$, as described in Lemma \ref{lemma-geodesic-not-passing-vertex-x}, but here we only consider the part for $t\in[-1,\infty)$.

Another way to perturb the geodesic $c(t)$ is given by a sufficiently small  parallel shifting of the geodesic
$\{c(t),t\in(-1-\varepsilon,-1+\varepsilon)\}$ with sufficiently small $\varepsilon>0$, in the direction of $w^+$, where $w^+\notin\mathbb{R}\dot{c}(0)$ is a unit outgoing tangent vector at $x_0$. Then the new geodesic can be extended to $(-\infty,\infty)$, which is a geodesic not passing $x$, as described in
Lemma \ref{lemma-geodesic-not-passing-vertex-x}. We denote it as
$\{c_{+,l}(t),t\in(-\infty,\infty)\}$ or $\{c_{+,r}(t),t\in(-\infty,\infty)\}$,
 according to $w^+\in\mathcal{S}_{x_0}^{+,l}$ or $w^+\in\mathcal{S}_{x_0}^{+,r}$ respectively. Here we add the subscript $+$ since they are  the extension of the geodesic in the positive direction. In this notation for the perturbation geodesic, we do not need to specify the exact choice of $w^+$, because only the sides for it are relevant to later discussion.

The following lemma is important for discussing curvature in the next section.
\begin{lemma}\label{lemma-monotonous}
Let $M$ be a piecewise flat Finsler surface and $T_xM$  the tangent cone of a vertex $x\in M$. Keep all the  notations as above. Let  $\{c(t),t\in(-\infty,0]\}$ be the geodesic
 with $c(0)=x$ and $c(-1)=x_0$. Then we have the following:
\begin{description}
\item{\rm (1)} If  $w^+, w'^+\in\mathcal{S}_{x_0}^{+,l}$ (or $w^+, w'^+\in\mathcal{S}_{x_0}^{+,r}$) satisfy the condition $$\angle_{x_0}^+(w^+,\dot{c}(-1))> \angle_{x_0}^+(w'^+,\dot{c}(-1)^+),$$ then we have
    $$\sphericalangle_{x}^\pm
    (\{c_{x_0,w^+}(t),t\in[-1,\infty)\})<
    \sphericalangle_{x}^\pm
    (\{c_{x_0,w'^+}(t),t\in[-1,\infty)\}).$$  In particular,
    any ray from $x$ intersects  $\{c_{x_0,w'^+}(t),t\in[-1,\infty)\}$ if
    it intersects  $\{c_{x_0,w^+}(t),t\in[-1,\infty)\}$.
\item{\rm (2)} The angles $\sphericalangle_{x}^\pm
    (\{c_{+,l}(t),t\in(-\infty,\infty)\})$ are the limits of $\sphericalangle_{x}^\pm
    (\{c_{x_0,w^+}(t),t\in[-1,\infty)\})$  when $w^+\in\mathcal{S}_{x_0}^{+,l}$ approaches  $\dot{c}(-1)$. In particular, $\sphericalangle_{x}^\pm
    (\{c_{+,l}(t),t\in(-\infty,\infty)\})$ only depends on the side of the parallel shifting. A ray from $x$ intersects  $\{c_{+,l}(t),t\in(-\infty,\infty)\}$ iff it intersects  $\{c_{x_0,w^+}(t),t\in(-1,\infty)\}$ for some $w^+\in\mathcal{S}_{x_0}^{+,l}$. The similar assertion is also true for perturbations to the right side.
\end{description}
\end{lemma}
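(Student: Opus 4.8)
The plan is to treat both parts as statements about how the sweep $\sphericalangle_x^\pm$, viewed through the projection $\mathrm{Pr}_x^\pm$, depends monotonically and then continuously on the initial direction at $x_0$. First I would record the structural facts that make $\sphericalangle_x^\pm$ well defined along each of these geodesics: by Theorem \ref{lemma-geodesic-not-passing-vertex-x} each $c_{x_0,w^+}$ extends to a geodesic missing $x$, a finite union of segments and two rays, and along such a geodesic $\mathrm{Pr}_x^\pm$ rotates monotonically in one fixed direction (on each straight segment this is elementary, and the edge-crossing equation forbids the rotation from reversing). Crucially, every $c_{x_0,w^+}$ starts at the common point $x_0$, so all of them share the same initial projection $\mathrm{Pr}_x^\pm(x_0)$ and sweep in the same direction (both $w^+,w'^+$ lie in $\mathcal{S}_{x_0}^{+,l}$). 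Consequently the total angular range swept by $c_{x_0,w^+}$ is an interval of $\mathcal{S}_x^\pm M$ with fixed initial endpoint $\mathrm{Pr}_x^\pm(x_0)$; its length is $\sphericalangle_x^\pm$, and the set of rays from $x$ it meets is exactly this interval. Hence the asserted inequality in (1) is equivalent to the containment of these intervals, which is precisely the ``in particular'' clause, and I would prove the two simultaneously.

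For the monotonicity itself I would set up a \emph{radial-order invariant}: if $\angle_{x_0}^+(w^+,\dot{c}(-1))>\angle_{x_0}^+(w'^+,\dot{c}(-1))$, then at every common angular position $\psi$ in the overlap of the two sweeps the point of $c_{x_0,w^+}$ seen in direction $\psi$ from $x$ lies strictly farther from $x$ than the corresponding point of $c_{x_0,w'^+}$. This holds at the start (both are at $x_0$, and an infinitesimal comparison of the two initial segments in the single cone $C_{x,0}$ shows the steeper direction $w^+$ moves radially outward faster), and I would propagate it by induction on the edges $E_1,E_2,\dots$ crossed: inside each cone the two curves are straight segments in a Minkowski plane, so the radial order cannot switch without the segments crossing, and Lemma \ref{mono-lemma-0} applied at the edge shows the ordering of the two crossing directions is preserved, so the order is carried into the next cone. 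Since the inner geodesic $c_{x_0,w'^+}$ is always radially closer to $x$, the outer one $c_{x_0,w^+}$ reaches its terminal ray (escapes to infinity) while $c_{x_0,w'^+}$ is still turning; thus $c_{x_0,w^+}$ stops sweeping at a strictly smaller total angle, giving the strict inequality and the interval containment.

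For part (2) I would argue by a limit/continuity argument anchored by the monotonicity just proved. By (1) the numbers $\sphericalangle_x^\pm(\{c_{x_0,w^+}(t),t\in[-1,\infty)\})$ are monotone as $w^+\in\mathcal{S}_{x_0}^{+,l}$ approaches $\dot{c}(-1)$, and by Theorem \ref{lemma-geodesic-not-passing-vertex-x} they stay finite (no infinite wrapping), so the limit exists. To identify it with $\sphericalangle_x^\pm(\{c_{+,l}(t)\})$, I would invoke continuous dependence of the extended geodesic on its initial data: the edge-crossing correspondences are homeomorphisms by Lemma \ref{mono-lemma-0} and the extension is unique and continuous by Lemma \ref{geodesic-extension-crossing-edge}, so as $w^+\to\dot{c}(-1)$ the finite sequence of crossing points of $c_{x_0,w^+}$ converges, segment by segment, to that of the parallel-shifted geodesic $c_{+,l}$ on every compact portion. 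Since $c_{+,l}$ is itself a finite union of segments and two rays missing $x$, the swept angle passes to the limit, and because $c_{+,l}$ depends only on the side of the shift, so does the limiting value. The intersection statement then follows by combining this with (1): the angular intervals of the $c_{x_0,w^+}$, $w^+\in\mathcal{S}_{x_0}^{+,l}$, are nested with union equal to the interval of $c_{+,l}$, so a ray from $x$ meets $c_{+,l}$ exactly when it meets some $c_{x_0,w^+}$.

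The main obstacle I expect is the propagation of the radial-order invariant across edge crossings in full generality: ``radially farther from $x$'' must be made precise through $\mathrm{Pr}_x^\pm$ when the cones are wide (so a single segment may sweep a large angle), and one must rule out the two segments crossing inside a cone and verify that Lemma \ref{mono-lemma-0} transmits the correct ordering at each edge. A secondary difficulty is the limit in (2): the geodesics $c_{x_0,w^+}$ degenerate toward the segment that \emph{passes through} $x$ as $w^+\to\dot{c}(-1)$, and one must check that the swept angle does not jump in the limit; this is exactly where the finiteness of edge crossings from Theorem \ref{lemma-geodesic-not-passing-vertex-x} is essential, since it bounds the number of segments whose limits must be controlled.
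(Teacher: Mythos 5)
Your skeleton is in fact the paper's: you view the two perturbed geodesics as monotone projection arcs in $\mathcal{S}_x^\pm M$ sharing the initial point $\mathrm{Pr}_x^\pm(x_0)$, reduce the angle inequality to containment of these arcs, propagate an ordering across the edge crossings by induction, and prove (2) by continuity of the edge-crossing correspondence (same crossing combinatorics, converging directions) combined with the monotonicity of (1) -- this is exactly how the paper proceeds. The edge step is also the same in substance: Lemma \ref{mono-lemma-0} applies here because the crossing relation depends only on directions (the norms are constant on each cone) and the radial vector $v_i$ lies along the edge $E_i$, which is a ray from $x$; the paper just uses Lemma \ref{lemma-convexity} together with the edge-crossing equation directly, which is the same content.

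The genuine weaknesses are both created by your \emph{radial-order} formulation, and both are avoidable. (a) The no-crossing claim inside a cone, which you flag but do not prove, is not needed at all: since the swept arc of each geodesic is determined by the shared sequence of edge directions $v_1,v_2,\ldots$ and the terminal direction (positions on the edges are irrelevant to $\mathrm{Pr}_x^\pm$), it suffices to propagate the purely directional invariant that $u_i$ is a positive linear combination of $u'_i$ and $v_i$; Lemma \ref{lemma-convexity} plus the edge-crossing equation do exactly this, and the same computation yields $m\le m'$, i.e.\ that the inner geodesic crosses at least as many edges. (If you do want non-crossing, it follows from strict convexity of the length function as in Lemma \ref{lemma-2}: two geodesics from $x_0$ meeting again after the same edge sequence would be two distinct critical points of a strictly convex function.) (b) More seriously, your strictness argument is invalid as stated: ``the inner geodesic is still turning when the outer one escapes'' fails when $m=m'$ and the two terminal rays are parallel, in which case two disjoint, radially ordered geodesics sweep \emph{equal} total angles. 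The paper closes precisely this case: equality of the swept angles forces $m=m'$ and $u_m$ parallel to $u'_m$, and then applying the edge-crossing equation backwards through Lemma \ref{lemma-geodesic-crossing-edge} gives $w^+=w'^+$, a contradiction; your direction-ordering remark at the edges supplies all the ingredients for this, but the radial order alone does not. Finally, in (2) your phrase ``the crossing points converge to those of $c_{+,l}$'' is scale-ambiguous (the crossing points of $c_{x_0,w^+}$ actually tend to $x$ as $w^+\to\dot{c}(-1)$, and $c_{+,l}$ is only defined up to the size of the shift); what converges, and what the paper uses, are the crossing combinatorics and the directions $u'_i\to u_i$, from which convergence of the swept arcs follows since these arcs are open and the difference has small length.
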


\begin{proof}
(1) According to Lemma \ref{lemma-geodesic-not-passing-vertex-x}, we can assume that $\{c_{x_0,w^+},t\in[-1,\infty)\}$ is the union
$$L_{x_0,x_1}\cup(\bigcup_{i=1}^{m-1}L_{x_i,x_{i+1}})\cup
R_{x_m,u_m},$$
where
$x_0\in (C_{x,0},F_0)$, and the line segments $L_{x_i,x_{i+1}}$ and the ray  $R_{x_m,u_m}$ satisfy  the following conditions: for $0\leq i\leq m-1$, $L_{x_i,x_{i+1}}$ is contained in $(C_{x,i},F_i)$ but not in an edge, for $1\leq i\leq m$, $x_i=c_{x_0,w^+}(t_i)\neq x$ is on the edge $E_i=C_{x,i-1}\cap C_{x,i}$,
and for $0\leq i\leq m$,
$u_i$ is the unit outgoing tangent vector at $x_i$ for $L_{x_i,x_{i+1}}$ and $R_{x_m,u_m}$, i.e. $u_i=v_{x_i,x_{i+1}}/F_{i}(v_{x_i,x_{i+1}})$ when $i<m$. Denote $v_i$ the unit tangent vector in the direction from $x$ to $x_i$,  $1\leq i\leq m$,
and in particular $v_0=-\dot{c}(-1)/F_0(-\dot{c}(-1))$.

The geodesic $\{c_{x_0,w'^+}(t),t\in[-1,\infty)\}$ can be similarly presented as the union of $m'$ straight line segments $L_{x'_i,x'_{i+1}}$ and the ray $R_{x'_{m'},u'_{m'} }$. Notice that it is a perturbation of $c(t)$ at the same side with $c_{x_0,w^+}(t)$, so we have
$x'_0=x_0$, and moreover, for $1\leq i\leq \min\{m,m'\}$, $x'_i=c_{x_0,w'^+}(t'_i)\neq x$ is on the same edge $E_i$ as $x_i$, and when $0\leq i\leq \min\{m,m'\}$, $L_{x'_{i},x'_{i+1}}$ (or $R_{x'_{m'},u'_{m'} }$ when $i=m'$) is contained in the same $C_{x,i}$ as $L_{x_{i},x_{i+1}}$ (or $R_{x_m,u_m^+}$ when $i=m$). Correspondingly, for $0\leq i\leq m'-1$, we have unit outgoing vectors
$u'_i=v_{x'_i,x'_{i+1}}/F_i(v_{x'_i,x'_{i+1}})$
at $x'_i$ from $x'_i$ to $x'_{i+1}$ for each $i<m'$.

First we prove $m\leq m'$.
Our assumption on the two geodesics implies that
$u_0=w^+$ is a positive linear combination of $u'_0 =w'^+$ and $v_0=-\dot{c}(-1)$. If $m=0$, it is done. Otherwise there exists an intersectional point $x_1$.
Then $v_1$ is a positive linear combination of $v_0$ and $u_0$. Combining these two observations, we conclude that
$v_1$ is a positive linear combination of $v_0$ and $u'_0$, which
indicates the existence of the intersectional point $x'_1$. On the other hand, since $u_0$ is also a positive linear combination of $u'_0 $ and $v_1$, by Lemma \ref{lemma-convexity}, we have
$\langle u_0,v_1\rangle_{u_0}^{F_0}
>\langle u'_0 ,v_1\rangle_{u'_0 }^{F_0}$.
Then by Lemma \ref{lemma-geodesic-crossing-edge},
we also have
$$\langle u_1,v_1\rangle_{u_1}^{F_1}
=\langle u_0,v_1\rangle_{u_0}^{F_0}
>\langle u'_0 ,v_1\rangle_{u'_0 }^{F_0}
=\langle u'_1,v_1\rangle_{u'_1 }^{F_1}.
$$
Notice that there exists only two possible cases, namely, either
 $u_1$ is a positive linear combination of $u'_1 $ and $v_1$,
or $u'_1$ is a positive linear combination of $u_1$ and $v_1$.
By Lemma \ref{lemma-convexity}, the second case is impossible. Using this argument inductively, we can prove that $m\leq m'$.

 Now we consider $C_{x,m}$. If $R_{x,v^+}$ is  a ray  intersecting $R_{x_m,u_m}$, then the previous argument can also used to  prove that $R_{x,v^+}$ intersects $L_{x_m,x_{m+1}}$ when $m<m'$,  or $R_{x'_m, u'_m}$ when $m=m'$.

The projection curve
$$\mathrm{Pr}_x^+(\{c_{x_0,w^+}(t),t\in[-1,\infty)\})
=\{\frac{v_{x,c_{x_0,w^+}(t)}}{F(v_{x,c_{x_0,w^+}(t)})}\in
\mathcal{S}_x^+M,t\in[-1,\infty)\}$$
is a monotonous curve in $\mathcal{S}_x^+M$, and so do
the projection curves with respect to $\mathrm{Pr}_x^-$ and for $c_{x_0,w'^+}(t)$.
Above argument shows, counting multiplicities, $\mathrm{Pr}_x^\pm(\{c_{x_0,w^+}(t),t\in[-1,\infty)\})$ are respectively contained in
$\mathrm{Pr}_x^\pm(\{c_{x_0,w'^+}(t),t\in[-1,\infty)\})$,  so we have
$\sphericalangle_{x}^\pm
(\{c_{x_0,w^+}(t),t\in[-1,\infty)\})\leq
    \sphericalangle_{x}^\pm
    (\{c_{x_0,w'{}^+}(t),t\in[-1,\infty)\})$.
The equality holds only  when $m=m'$ and $u_m^+$ is parallel to $u'_m $.
Using Lemma \ref{lemma-geodesic-crossing-edge} repeatedly, we get
$w^+=u_0=u'_0 =w'{}^+$. This is a contradiction. So the inequality is sharp.
The other statement in (1) has  been proved in the previous discussion.

(2) A similar argument as for (1) shows that, counting multiplicities, $\mathrm{Pr}_x^\pm(\{c_{x',w^+}(t)$, $t\in(-1,\infty)\})$ is contained in
$\mathrm{Pr}_x^\pm(\{c_{+,l}(t),t\in(-\infty,\infty)\})$ for any $w^+\in\mathcal{S}_{x'}^{+,l}$.

Now we prove that $\mathrm{Pr}_x^\pm(\{c_{+,l}(t),t\in(-\infty,\infty)\})$ are respectively the limits of $\mathrm{Pr}_x^\pm(\{c_{x',w^+}(t),t\in(-1,\infty)\})$  when $w^+\in\mathcal{S}_{x'}^{+,l}$ approaches $\dot{c}(-1)^+$. By Theorem \ref{lemma-geodesic-not-passing-vertex-x},
  we can write $\{c_{+,l}(t),t\in(-\infty,\infty)\}$ as the
union
$$R_{x_1,u_1^-}\cup(\bigcup_{i=1}^{m-1} L_{x_i,x_{i+1}})\cup R_{x_m, u_m^+},$$
where each $x_i\neq x$ is on the edge $E_i$, and each line segment or ray is not contained in an edge. For $1<i<m$, we denote $u_i=v_{x_i,x_{i+1}}/F_i(v_{x_i,x_{i+1}})$
the unit outgoing vectors at $x_i$ on $L_{x_i,x_{i+1}}$. Since both geodesics $\{c_{+,l}(t),t\in(-\infty,\infty)\}$ and $\{c_{x',w^+}(t),t\in[-1,\infty)\}$ are on the left side of the geodesic $c(t)$ around $x'$, $\{c_{x',w^+}(t),t\in(-1,+\infty)\}$ intersects the edges $E_i$
 sequentially. If $w^+\in\mathcal{S}_{x_0}^{+,l}$ is sufficiently close to $\dot{c}(-1)^+$, then by the continuity implied by the edge-crossing equation (\ref{geodesic-equation-crossing-edge}), the edges that
$\{c_{x',w^+}(t),t\in[-1,\infty)\}$ intersects are exactly all the $m$ edges $E_i$. Let $x'_i$ be the intersection points between $c_{x',w^+}(t)$ and $E_i$, and denote the unit outgoing tangent vectors $u'_i=v_{x'_i,x'_{i+1}}/F_i(v_{x'_i,x'_{i+1}})$
at $x'_i$, for $1\leq i<m$, and $u'_m$ for the Ray
$R_{x'_m,u'_m}$ part of the geodesic $\{c_{x',w^+}(t),t\in[-1,\infty)\}$. When $w^+\in\mathcal{S}_{x_0}^{+,l}$ is sufficiently close to $\dot{c}(-1)^+$, each ${u'_i}$ would be very close to the corresponding $u_i$. This implies that the difference part between  $\mathrm{Pr}^\pm_x(\{c_{x',w^+}(t),t\in(-1,\infty)\})$ and $\mathrm{Pr}^\pm_x(\{c_{+,l}(t),t\in(-\infty,\infty)\})$ respectively have sufficiently small arc length in $\mathcal{S}_x^\pm M$,
when $w^+$ is sufficiently close to $\dot{c}(-1)$.

Combining  the monotonous property we have shown in the proof of (1), with  the fact that $\mathrm{Pr}^\pm_x(\{c_{x',w^+}(t),t\in(-1,\infty)\})$ are open curves in
$\mathcal{S}_x^\pm M$, we complete the proof of the convergence for the sets and for the angles.
The other assertions in (2) also follows immediately.
\end{proof}

Next we consider the perturbation of a unit speed geodesic $\{c(t),t\in[0,\infty)\}$. For $x_0=c(1)$, denote the two connected
components of $\mathcal{S}_{x_0}^-(T_xM)\backslash\{\pm\dot{c}(1)\}$ as
$\mathcal{S}_{x_0}^{-,l}$ and $\mathcal{S}_{x_0}^{-,r}$. Fix
 a local orientation of $M$ at $x$. Then it  defines the left side and
right side. So one can similarly define the geodesics
$\{c_{x_0,w^-}(t),t\in(-\infty,1]\}$ for $w^-\in\mathcal{S}_{x_0}^{-,l}$ or $\mathcal{S}_{x_0}^{-,r}$, and
$\{c_{-,l}(t),t\in(-\infty,\infty)\}$ and $\{c_{-,r}(t),t\in(-\infty,\infty)\}$,  where we add the subscript $-$ to indicate that they are  the extension of a geodesic in the negative direction.

 The following result is an analogue of Lemma \ref{lemma-monotonous}. Since the proof is similar to the previous one,  we  omit it.

\begin{lemma}\label{lemma-monotonous-2}
Let $M$ be a piecewise flat Finsler surface and $T_xM$  the tangent cone of the vertex $x\in M$. Keep all the notations as above. Let $\{c(t),t\in[0,\infty)\}$ be the geodesic with $c(0)=x$ and $c(1)=x_0$. Then  we have the following:
\begin{description}
\item{\rm (1)} If  $w^-, w'^-\in\mathcal{S}_{x_0}^{-,l}$ (or $w^-, w'^-\in\mathcal{S}_{x_0}^{-,r}$) satisfies the condition $$\angle_{x_0}^-(w^-,\dot{c}(1))> \angle_{x_0}^-(w'^-,\dot{c}(1)),$$ then we have
    $$\sphericalangle_{x}^\pm
    (\{c_{x_0,w^-}(t),t\in(-\infty,1]\})<
    \sphericalangle_{x}^\pm
    (\{c_{x_0,w'^-}(t),t\in(-\infty,1]\}).$$
     In particular,
    a ray from $x$ intersects  $\{c_{x_0,w'^-}(t),t\in(-\infty,1]\}$ if
    it intersects  $\{c_{x_0,w^-}(t),t\in(-\infty,1]\}$.
\item{\rm (2)} The angles $\sphericalangle_{x}^\pm
    (\{c_{-,l}(t),t\in(-\infty,\infty)\})$ are respectively the limits of $\sphericalangle_{x}^\pm
    (\{c_{x_0,w^-}(t)$, $t\in(-\infty,1]\})$,  as $w^-\in\mathcal{S}_{x_0}^{-,l}$ approaches  $\dot{c}(1)^-$. In particular, the angle $\sphericalangle_{x}^\pm
    (\{c_{-,l}(t),t\in(-\infty,\infty)\})$ only depends on the side of the parallel shifting. Moreover, a ray from $x$ intersects  $\{c_{-,l}(t),t\in(-\infty,\infty)\}$ iff it intersects some $\{c_{x_0,w^-}(t),t\in(-\infty,1)\}$ with $w^-\in\mathcal{S}_{x_0}^{-,l}$. A similar assertion is also valid for perturbations to the right side.
\end{description}
\end{lemma}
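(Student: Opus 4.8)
The plan is to reduce Lemma \ref{lemma-monotonous-2} to Lemma \ref{lemma-monotonous} by a time reversal. On the tangent cone $T_xM$ introduce the \emph{reverse metric} $\overleftarrow{F}$, given on each cone $C_{x,i}$ by $\overleftarrow{F}_i(y)=F_i(-y)$. Each $\overleftarrow{F}_i$ is again a Minkowski norm, since its Hessian at $y$ is the Hessian of $F_i^2$ at $-y$, hence positive definite, and the $\overleftarrow{F}_i$ still agree on common edges because the $F_i$ do; thus $(T_xM,\overleftarrow{F})$ is once more the tangent cone of the vertex $x$, so all results of Sections 4 and 5 apply to it. For a curve $c$ set $\bar{c}(s)=c(-s)$. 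A direct check with (\ref{define-arc-length}) and (\ref{define-distance}) gives $l(\{\bar{c}(s),s\in[-t_2,-t_1]\})_{\overleftarrow{F}}=l(\{c(t),t\in[t_1,t_2]\})_{F}$ and $\bar{d}(a,b)=d(b,a)$, so $c$ is an $F$-geodesic if and only if $\bar{c}$ is an $\overleftarrow{F}$-geodesic. Under this correspondence the geodesic $\{c(t),t\in[0,\infty)\}$ with $c(0)=x$ and $c(1)=x_0$ becomes the $\overleftarrow{F}$-geodesic $\{\bar{c}(s),s\in(-\infty,0]\}$ with $\bar{c}(0)=x$ and $\bar{c}(-1)=x_0$, which is exactly the input of Lemma \ref{lemma-monotonous} for the metric $\overleftarrow{F}$.

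Next I would translate the perturbations and the indicatrix data. The incoming $F$-tangent vectors at $x_0$ correspond to outgoing $\overleftarrow{F}$-tangent vectors via $w^-\mapsto -w^-$, which carries $\mathcal{S}_{x_0}^-(T_xM)$ onto the $\overleftarrow{F}$-outgoing indicatrix and sends $\mathcal{S}_{x_0}^{-,l}$ and $\mathcal{S}_{x_0}^{-,r}$ to the two components used in Lemma \ref{lemma-monotonous} (same side to same side; which component is which is immaterial, as the statement treats $l$ and $r$ symmetrically). Because $\langle\cdot,\cdot\rangle_y^{\overleftarrow{F}}=\langle\cdot,\cdot\rangle_{-y}^{F}$, the Hessian metrics on the two indicatrices correspond under $y\mapsto -y$, so $\angle_{x_0}^{-}(w^-,\dot{c}(1))$ equals the $\overleftarrow{F}$-outgoing angle $\angle_{x_0}^{+}(-w^-,\dot{\bar{c}}(-1))$, the perturbation $c_{x_0,w^-}$ reverses to $\bar{c}_{x_0,-w^-}$, and the parallel shifts $c_{-,l},c_{-,r}$ reverse to the $\overleftarrow{F}$-parallel shifts $\bar{c}_{+,l},\bar{c}_{+,r}$. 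Hence the hypotheses of Lemma \ref{lemma-monotonous-2} are precisely the hypotheses of Lemma \ref{lemma-monotonous} read for $(T_xM,\overleftarrow{F})$ and $\bar{c}$.

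The one point requiring care, and the main obstacle, is that Lemma \ref{lemma-monotonous} applied to $\overleftarrow{F}$ yields inequalities for the $\overleftarrow{F}$-arc-length of projection curves, whereas $\sphericalangle_x^\pm$ in Lemma \ref{lemma-monotonous-2} is the $F$-arc-length $l_x^\pm$; these numerical values do not agree, since the two metrics induce different lengths on the indicatrix circles. The way around this is to invoke not the numerical angle output of Lemma \ref{lemma-monotonous} but the intrinsic intermediate conclusion established inside its proof, namely the strict containment, counted with multiplicity, $\mathrm{Pr}_x^\pm(\{c_{x_0,w^-}\})\subsetneq\mathrm{Pr}_x^\pm(\{c_{x_0,w'^-}\})$ of the oriented projection curves as subsets of the indicatrix circle. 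This containment records only which rays from $x$ are met by the two geodesics, obtained from their decompositions via Theorem \ref{lemma-geodesic-not-passing-vertex-x} and compared edge-by-edge through the edge-crossing equation (\ref{geodesic-equation-crossing-edge}) and Lemma \ref{lemma-convexity}; as a collection of directions it is intrinsic and independent of the metric used to measure arc length, and the time reversal identifies the directions $v_{x,\cdot}$ and $v_{\cdot,x}$ read off $\bar{c}$ in $\overleftarrow{F}$ with the corresponding $\mathrm{Pr}_x^{+}$ and $\mathrm{Pr}_x^{-}$ curves of $c$ in $F$. Applying $l_x^\pm$ to this established containment gives the strict inequalities of part (1), and feeding the analogous convergence of projection curves (the intrinsic content of Lemma \ref{lemma-monotonous}(2)) into $l_x^\pm$ gives the limit statement of part (2); the ray-intersection assertions follow at once from the containments. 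The bookkeeping of keeping the superscripts $\pm$, the sides $l$ and $r$, and the distinction between intrinsic (projection-curve) data and metric-dependent (arc-length) data straight under the reversal is the only delicate ingredient; once it is handled, the entire combinatorial core is inherited from Lemma \ref{lemma-monotonous}. Alternatively one may bypass the metric matching altogether and simply repeat the proof of Lemma \ref{lemma-monotonous} with incoming and outgoing vectors interchanged and the direction of extension reversed, since each tool used there, namely Theorem \ref{lemma-geodesic-not-passing-vertex-x}, Lemma \ref{lemma-convexity} and the edge-crossing equation, is symmetric under these exchanges.
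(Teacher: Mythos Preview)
Your proposal is correct. The paper itself omits the proof entirely, stating only that it is similar to that of Lemma \ref{lemma-monotonous}; your alternative at the end (simply rerunning the argument with incoming and outgoing vectors interchanged) is exactly what the paper has in mind, and your primary time-reversal reduction via the reverse metric $\overleftarrow{F}$ is a clean formalization of that same symmetry, with the added virtue that you explicitly isolate the one genuinely metric-independent ingredient (the containment of projection curves) needed to transfer the conclusion back to $F$.
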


\section{Curvature and extension of geodesics}
\subsection{Definition of curvature}
Now we define the curvature of a piecewise flat Finsler surface $M$ at a vertex $x$. As it is a local geometric quantity,  we only need to define it in the tangent cone $T_xM$.

Let $v^\pm$ be a unit incoming or outgoing tangent vector at $x$.
We have described  several perturbations for the geodesic ray $R_{x,v^\pm}$, i.e.,
the geodesics $c_{x_0,w^\pm}(t)$, $c_{\pm,l}(t)$ and $c_{\pm,r}(t)$.

\begin{definition}\label{def-curvature-piecewise-flat-surface}
Let $M$  be a  piecewise flat Finsler surface and $T_xM$ the tangent cone of a vertex $x\in M$.

(1) For the unit incoming tangent vector $v^-$, the curvature of the tangent cone $T_xM$ in the direction of $v^-$ is
\begin{eqnarray}\label{define-curvature-1}
K(x,v^-)&=&\sphericalangle_x^+
(\{c_{+,l}(t),t\in(-\infty,\infty)\})
+\sphericalangle_x^+(\{c_{+,r}(t),t\in(-\infty,\infty)\})
\nonumber\\
& &-
l_x^+(\mathcal{S}_x^+M).
\end{eqnarray}

(2) For the unit outgoing tangent vector $v^+$ at $x$, the curvature of the tangent cone $T_xM$ in the direction of $v^+$ is
\begin{eqnarray}\label{define-curvature-2}
K(x,v^+)&=&\sphericalangle_x^-
(\{c_{-,l}(t),t\in(-\infty,\infty)\})
+\sphericalangle_x^-(\{c_{-,r}(t),t\in(-\infty,\infty)\})
\nonumber\\& &
-l_x^-(\mathcal{S}_x^-M).
\end{eqnarray}

(3) For a unit incoming or outgoing vector $v^\pm$ at $x\in M$, the curvature $K(x,v^\pm)$ is just the curvature of the tangent cone $T_xM$ in the direction of $v^\pm$ at $x$.
\end{definition}

In the following we first make  several fundamental observations.

First, although we use an orientation on $T_xM$, i.e.,  a local orientation on $M$ to distinguish the left side and right side, the choice of the local orientation does not affect the curvature defined.
 If $x$ is not a vertex, we always have $K(x,v^\pm)=0$
  for all unit tangent vectors (so the curvature of a piecewise flat Finsler
surface only concentrates on the vertices). We may add more edges and vertices
to the triangulation on $M$, which changes a non-vertex point $x$ to a vertex.
It is easy to see that the new vertex has $0$ curvatures for all tangent vectors.

Second, in case that $M$ is a piecewise flat Riemannian surface, the above definition
coincides with
$K=2\pi-\sum\alpha_i$ where $\alpha_i$ is the angle for each cone in $T_xM$;
see \cite{Regge1961}.
Notice that in Riemannian context, the curvature is independent of the unit tangent vector $v^\pm$. However, in Finsler context, it is natural that the curvature depends on a nonzero base vector, e.g., the flag curvature of a general Finsler space.

If $M$ is a reversible, it is not hard to observe that $K(x,v)=K(x,-v)$, where $v$ is an outgoing tangent vector at $x$, and $-v$ is viewed as an incoming tangent vector at $x$.
In the next subsection, we will prove an even stronger statement when $M$ is of Landsberg type, i.e. the curvature at a vertex point $x\in M$ is independent of
the unit incoming or outgoing tangent vector $v^\pm\in\mathcal{S}_x^\pm M$. So it is a generalization for
the curvature form for a smooth Landsberg surface \cite{BCS2000}. This observation will be reverified
in the last section when we prove the combinatoric Gauss-Bonnet
formula.

\subsection{Curvature of a piecewise flat Landsberg surface}

In this subsection,
We  study  the curvature property of a piecewise flat Landsberg surface.

\begin{theorem} \label{main-thm-3}
Let $M$ be a piecewise flat Finsler surface  of Landsberg type. Then
the curvature at a vertex $x$ is independent of  $v^\pm\in\mathcal{S}_x^\pm M$.
\end{theorem}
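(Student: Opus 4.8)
The plan is to reduce the direction-dependence of $K(x,v^-)$ to the behaviour of a single ``development'' map governed by the edge-crossing equation, and then to exploit the Landsberg condition, which forces every edge-crossing to be an arc-length isometry of the indicatrices. First I would fix an arc-length coordinate on the outgoing indicatrix $\mathcal{S}_x^+M$, so that $l_x^+(\mathcal{S}_x^+M)=:L^+$ is a fixed constant. For a unit incoming vector $v^-$, let $w_0$ be the forward direction at $x$ of the ray $R_{x,v^-}$, and $\psi$ its coordinate; the two extreme perturbations $c_{+,l}$ and $c_{+,r}$ of Lemma \ref{lemma-monotonous} share this incoming ray, so their common incoming asymptote, the limit of $\mathrm{Pr}_x^+(c(t))$ as $t\to-\infty$, equals $-w_0$ and has one fixed coordinate $\phi$. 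By Theorem \ref{lemma-geodesic-not-passing-vertex-x} each of $c_{+,l},c_{+,r}$ is a union of two rays and finitely many segments; writing $A_l,A_r$ for the coordinates of their outgoing asymptotes and using that $\mathrm{Pr}_x^+$ sweeps monotonically, the definition of $\sphericalangle_x^+$ gives $\sphericalangle_x^+(c_{+,l})=A_l-\phi$ and $\sphericalangle_x^+(c_{+,r})=\phi-A_r$. Substituting into Definition \ref{def-curvature-piecewise-flat-surface}, the common incoming-asymptote term $\phi$ cancels and
$$K(x,v^-)=A_l-A_r-L^+.$$

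The heart of the matter is then to show that $A_l-A_r$ does not depend on $v^-$. I would track how $A_l$ depends on $w_0$, i.e.\ on $\psi$. When the geodesic crosses an edge $E_i$ at a non-vertex point $x_i$, the edge-crossing equation (\ref{geodesic-equation-crossing-edge}) sends the forward velocity $w_{i-1}$ in $C_{x,i-1}$ to the forward velocity $w_i$ in $C_{x,i}$; since $M$ is flat inside each cone, the Hessian arc length on each cone's indicatrix is well defined, and these local maps transport to maps between the indicatrices at $x$. Thus $A_l$ is the coordinate of $T_l(w_0)$, where $T_l$ is the composition of the edge-crossing maps met by $c_{+,l}$. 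The Landsberg condition (Definition \ref{define-landsberg-berwald}) says precisely that each such edge-crossing map is an isometry for the Hessian arc length, so $T_l$ is a translation in the coordinate and $dA_l/d\psi=1$; the same holds for $T_r$, giving $dA_r/d\psi=1$. Hence $A_l-A_r$ is locally constant in $\psi$. Its continuity across the finitely many values of $\psi$ at which the number of edges crossed by $c_{+,l}$ or $c_{+,r}$ jumps follows from the continuity of the edge-crossing maps (they fix the edge directions) together with Lemma \ref{lemma-monotonous}. Since $\mathcal{S}_x^-M$ is connected, $A_l-A_r$, and therefore $K(x,v^-)$, is constant.

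An identical argument using $\mathrm{Pr}_x^-$, the perturbations $c_{-,l},c_{-,r}$ of Lemma \ref{lemma-monotonous-2}, and the incoming indicatrix $\mathcal{S}_x^-M$ shows that $K(x,v^+)$ is independent of $v^+$. To see that the two constants coincide I would read a single grazing geodesic from both ends: its incoming and outgoing asymptotes are related by the same composition of edge-crossing isometries, so the ``extendability gap'' $A_l-A_r-L^+$ measured in $\mathcal{S}_x^+M$ matches the corresponding gap measured in $\mathcal{S}_x^-M$, yielding $K(x,v^-)=K(x,v^+)$ and hence a single $K(x)$.

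I expect the main obstacle to be the control of the grazing limit: justifying rigorously that the developments $T_l,T_r$ of the limiting perturbations $c_{\pm,l},c_{\pm,r}$ are exactly the compositions of the Landsberg edge-crossing isometries at the edges $E_i$ emanating from $x$ (here Lemma \ref{lemma-monotonous}(2) is the essential tool), and that $A_l-A_r$ stays continuous when the combinatorial type of the grazing geodesic changes as $v^-$ rotates. The orientation bookkeeping needed to ensure both developments have rate $+1$ rather than $-1$, and the precise identification of the incoming and outgoing gaps required for the equality $K(x,v^-)=K(x,v^+)$ despite $l_x^+(\mathcal{S}_x^+M)$ and $l_x^-(\mathcal{S}_x^-M)$ differing at a vertex, are the other delicate points.
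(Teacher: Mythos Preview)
Your approach is essentially the paper's: both rest on the Landsberg condition making each edge-crossing map an arc-length isometry of the indicatrices, so that the outgoing asymptotic directions $A_l,A_r$ of the two grazing perturbations rotate rigidly with $v^-$; the paper carries this out as a derivative chain (its equation~(\ref{2003})) showing $\frac{d}{ds}K(x,v_s^-)=0$ for generic $s$, while you package the same computation as ``$T_l,T_r$ are translations in the arc-length coordinate, hence $dA_l/d\psi=dA_r/d\psi$.'' The step you correctly flag as delicate---proving $K(x,v^-)=K(x,v^+)$ in the face of a possible discrepancy between $l_x^+(\mathcal{S}_x^+M)$ and $l_x^-(\mathcal{S}_x^-M)$---is resolved in the paper by first establishing that these two lengths coincide (Assertion~(C), obtained by letting $v^-$ traverse the full circle $\mathcal{S}_x^-M$ and noting that the outgoing asymptote then traverses $\mathcal{S}_x^+M$ once, with the same total arc length by the very isometry you invoke), after which your ``read a single grazing geodesic from both ends'' becomes precisely the paper's identification of the perturbation $c_{1,-,l}$ of $R_{x,v_1^+}$ with a perturbation of $R_{x,v_0^-}$, allowing both curvatures to be expressed as the same arc length $l_x^\pm(\{v_s^\pm\})$.
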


\begin{proof} For the convenience, we fix a local orientation at $x\in M$, i.e.,  we fix an orientation on the tangent cone $T_xM$.

We first prove that $K(x,v^-)$ is independent of $v^-\in\mathcal{S}_x^- M$. The discussion can be carried out in the tangent cone $T_xM$. For any $v_0^-, v_1^-\in\mathcal{S}_x^-M$, there exists a piecewise smooth monotonous family of unit vectors $v_s^-\in\mathcal{S}_x^-M$, $s\in[0,1]$,  connecting them. With respect to the chosen orientation, there are perturbation geodesics $c_{+,l}(t)$ and $c_{+,r}(t)$ for each ray $R_{x,v_s^-}$, which are denoted as $\{c_{s,+,l}(t),t\in(-\infty,\infty)\}$ and $\{c_{s,+,r}(t),t\in(-\infty,\infty)\}$, respectively.

 Now for any $s\in[0,1]$,  $\{c_{s,+,l}(t),t\in(-\infty,\infty)\}$ is a finite union $$R_{x_{s,1},u_{s,1}^-}\cup (\bigcup_{i=1}^{m_s-1} L_{x_{s,i},x_{s,i+1}})\cup R_{x_{s,m_s},u_{s,m_s}^+},$$
where each $x_{s,i}=c_{s,+,l}(t_{s,i})\neq x$ lies on the edge $E_i$. Note that the rays and line segments are not contained in edges, but are contained in the Minkowski cones $(C_{x,i},F_i)$,
$i=0,1,\ldots,m_s$, respectively. Moreover, we have the unit tangent vectors $u_{s,i}^\pm=\dot{c}_{s,+,l}^\pm
(t_{s,i})\in\mathcal{S}_{x_{i}}^+(T_xM)$, $u_{s,1}^-=v_{s}^-$, and
$u_{s,i}^+=u_{s,i+1}^-$ (because they are in the same direction of $L_{x_{s,i},x_{s,i+1}}$). Denote the edges of $C_{x,i}$ as $E_{i}$ and $E_{i+1}$, and for $1\leq i\leq m_s$, we have $E_i=C_{x,i-1}\cap C_{x,i}$.

The curvature $K(x,v_s^-)$ depends continuously on the parameter $s$.
For almost all $s\in[0,1]$, none of $u_{s,i}^\pm$ is parallel to any edge. Let $s_0\in[0,1]$ be such a generic parameter. Then
for $s$ sufficiently close to $s_0$, $m_s$ is a constant $m$, i.e.,
$$\{c_{s,+,l}(t),t\in(-\infty,\infty)\}=
R_{x_{s,1},u_{s,1}^-}\cup (\bigcup_{i=1}^{m-1} L_{x_{s,i},x_{s,i+1}})\cup R_{x_{s,m},u_{s,m}^+},
$$
for all $s\in(s_0-\epsilon,s_0+\epsilon)$ with sufficiently small $\epsilon>0$.

By the Landsberg condition, we have
\begin{eqnarray}\label{2003}
\langle\frac{d}{ds}v_{s}^-|_{s=s_0},
\frac{d}{ds}v_{s}^-|_{s=s_0}
\rangle^{F_0}_{v_{s_0}^-} &=&
\langle\frac{d}{ds}u_{1,s}^-|_{s=s_0},
\frac{d}{ds}u_{1,s}^-|_{s=s_0}
\rangle^{F_1}_{u_{1,s}^-}\nonumber\\
&=&\langle\frac{d}{ds}u_{1,s}^+|_{s=s_0},
\frac{d}{ds}u_{1,s}^+|_{s=s_0}
\rangle_{u_{1,s}^+}^{F_1}\nonumber\\
&=&\langle\frac{d}{ds}u_{2,s}^-|_{s=s_0},
\frac{d}{ds}u_{2,s}^-|_{s=s_0}
\rangle_{u_{2,s}^-}^{F_2}\nonumber\\
&=&\cdots\nonumber\\
&=&\langle\frac{d}{ds}u_{m,s}^+|_{s=s_0},
\frac{d}{ds}u_{m,s}^+|_{s=s_0}
\rangle_{u_{m,s}^+}^{F_m}.
\end{eqnarray}
By (\ref{2003}) and a similar equality for $c_{s,+,r}(t)$,
we get $\frac{d}{ds}K(x,v_s^-)|_{s=s_0}=0$. Combining this fact with the continuity of $K(x,v_s^-)$ for the parameter $s$,
we conclude that $K(x,v_s^-)\equiv \mathrm{const}$, for $s\in[0,1]$, i.e., $K(x,v^-)$
is independent of $v^-\in\mathcal{S}_x^-M$.

A similar argument shows that $K(x,v^+)$ is independent of $v^+\in
\mathcal{S}_x^+M$.

Above  argument can also be used to prove

\bigskip
{\bf Assertion (C).} For any point $x$ on a piecewise flat Landsberg surface, we have
$l_x^+(\mathcal{S}_x^+M)=l_x^-(\mathcal{S}_x^-M)$.
\bigskip

Let $v_s^-$ move monotonously around the whole circle $\mathcal{S}_x^-M$. Then with respect  to the chosen orientation in $T_xM$, the ray in $\{c_{s,+,l}(t),t\in(-\infty,\infty)\}$ for $t$ going to $\infty$ defines
a vector $v_s^+\in\mathcal{S}_x^+M$ which also moves monotonously around the whole circle $\mathcal{S}_x^+M$. By (\ref{2003}),  we  get $l_x^+(\mathcal{S}_x^+M)=l_x^-(\mathcal{S}_x^-M)$, which proves the assertion.

Finally, we prove that $K(x,v^-)=K(x,v^+)$ for any $v^\pm\in\mathcal{S}_x^\pm M$.

Consider a vector $v_1^+\in\mathcal{S}_x^+M$.  There exists a non-negative integer $m$, such that
\begin{equation}\label{0005}
(m-1)\cdot l_x^+(\mathcal{S}_x^+M)<K(x,v_1^+)\leq m\cdot l_x^+(\mathcal{S}_x^+M).
\end{equation}
Assume $m=0$, i.e.,  $K(x,v_1^+)\leq 0$.
We have two geodesics
$\{c_{1,-,l}(t),t\in(-\infty,\infty)\}$ and $\{c_{1,-,r}(t),t\in(-\infty,\infty)\}$ which define $v_0^-,v_1^-\in\mathcal{S}_x^-M$ respectively by their rays when $t$ goes to $-\infty$.
Consider a piecewise smooth family of vectors $v_s^-\in\mathcal{S}_x^-M$,  $s\in[0,1]$, connecting $v_0^-$ and $v_1^-$, which rotates around $\mathcal{S}_{x}^-M$ anti-clockwise (with respect to the given orientation), but does not cover the whole circle $\mathcal{S}_{x}^-M$. Then for any $v_s^-$, we have the geodesic $c_{s,+,r}(t)$ which defines $v_s^+\in\mathcal{S}_x^+M$ by the ray for $t$ going to $+\infty$. Notice that $v_1^+$ coincides  the one we are using. By a similar equality as (\ref{2003}), we have
\begin{equation}\label{0004}
l_x^-(\{v_s^-,s\in[0,1]\})
=l_x^+(\{v_s^+,s\in[0,1]\}).
\end{equation}
The outgoing tangent vectors $v_s^+$ at $x$ for $s\in[0,1]$ also rotate anti-clockwise, and they can not cover the whole circle $\mathcal{S}_x^+M$.

Notice that the geodesics $c_{1,-,l}(t)$ and $c_{0,+,r}(t)$ can also be used to define the curvature $K(x,v_0^-)$, i.e.,
\begin{eqnarray*}
K(x,v_0^-)&=&\sphericalangle_x^+
(\{c_{1,-,l}(t),t\in(-\infty,\infty)\})
+\sphericalangle_x^+(\{c_{0,+,r}(t),t\in(-\infty,\infty)\})-
l_x^+(\mathcal{S}_x^+M)\\
&=&-l_x^+(\{v_s^+,s\in[0,1]\})=-l_x^-(\{v_s^-,s\in[0,1]\})\\
&=&\sphericalangle_x^-(\{c_{1,-,l}(t),t\in(-\infty,\infty)\})
+\sphericalangle_x^-(\{c_{1,-,r}(t),t\in(-\infty,\infty)\})
-l_x^-(\mathcal{S}_x^-M)\\
&=&K(x,v_1^+).
\end{eqnarray*}
This finish  the proof when the curvature is non-positive.

  In the case that $K(x,v_1^+)>0$, i.e.,  the integer $m$ in (\ref{0005}) is positive, the argument is almost the same. We still have $v_0^-$ and $v_1^-$ defined
from the rays in the geodesics $c_{-,l}(t)$ and $c_{-,r}(t)$ when $t$ goes to
$-\infty$. We
just need to change the monotonous family $\{v_s^-,s\in[0,1]\}$ in $\mathcal{S}_x^-M$, such that when $s$ goes from 0 to 1, they
rotate clockwise around the whole circle $\mathcal{S}_x^-M$ for $m-1$ times (and reach some part of it for one more time). Then so do the family $\{v_s^+,s\in[0,1]\}$ in $\mathcal{S}_x^+M$, and we have
\begin{eqnarray*}
K(x,v_0^-)
=l_x^+(\{v_s^+,s\in[0,1]\})=l_x^-(\{v_s^-,s\in[0,1]\})
=K(x,v_1^+),
\end{eqnarray*}
which completes the proof of the theorem.
\end{proof}
\begin{remark}
 Based on this theorem, we can simply denote the curvature of a piecewise flat Landsberg surface as $K(x)$.
\end{remark}

Also we restate Assertion (C) as the following proposition.
\begin{proposition}\label{lemma-5}
For any point $x$ on a piecewise flat Landsberg surface, we have
$l_x^+(\mathcal{S}_x^+M)=l_x^-(\mathcal{S}_x^-M)$.
\end{proposition}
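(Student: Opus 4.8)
The plan is to realize the two indicatrix circles $\mathcal{S}_x^-M$ and $\mathcal{S}_x^+M$ as isometric via a geodesic ``scattering'' correspondence built from the perturbation geodesics of Section~5, and then to use the Landsberg condition to upgrade this correspondence into an arc-length isometry. A non-vertex, non-edge point $x$ has $\mathcal{S}_x^+M=\mathcal{S}_x^-M$ (the same indicatrix circle, only the labels incoming/outgoing differ), so the statement is trivial there; an edge point is handled by the argument below in the simplest instance, where the geodesics cross the single edge $E$ at most once, so the analogue of the scattering map is manifestly a degree-one homeomorphism. Thus the substantial case is that of a vertex, to which I restrict, after fixing a local orientation on $T_xM$.

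First I would construct the forward scattering map. For each $v^-\in\mathcal{S}_x^-M$ consider the left perturbation $\{c_{+,l}(t),t\in(-\infty,\infty)\}$ of the ray $R_{x,v^-}$, which by Theorem~\ref{lemma-geodesic-not-passing-vertex-x} is a finite union $R_{x_1,u_1^-}\cup(\bigcup_{i=1}^{m-1}L_{x_i,x_{i+1}})\cup R_{x_m,u_m^+}$ not passing $x$; its incoming ray has direction $v^-$ and its outgoing ray determines a vector $\Phi(v^-)=v^+\in\mathcal{S}_x^+M$. By Lemma~\ref{lemma-monotonous} (especially the limit description in part~(2)) this $\Phi$ is well defined and depends strictly monotonically on $v^-$ with respect to the orientation, the monotonicity of the individual edge crossings coming from Lemma~\ref{mono-lemma-0}. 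Hence $\Phi$ is a continuous, orientation-preserving, locally injective map between circles, i.e. a covering map of some degree $d\ge 1$.

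Next I would show that $\Phi$ is a local isometry for the Hessian arc-length. The direction $v^-$ is carried to $v^+$ by the composition of the edge-crossing maps $u_i^-\mapsto u_i^+$ at the points $x_i\in E_i$, interspersed with the identifications $u_i^+=u_{i+1}^-$ along the segments (these preserve directions inside a single Minkowski cone) and the trivial identifications of the indicatrix of $F_0$, resp. $F_m$, at $x$ and at $x_1$, resp. $x_m$. Each edge-crossing map is an isometry of the indicatrix arcs by the Landsberg condition of Definition~\ref{define-landsberg-berwald}, which is precisely the chain of equalities~(\ref{2003}). Therefore $\Phi$ preserves the infinitesimal Hessian arc-length, and letting $v^-$ run once around $\mathcal{S}_x^-M$ shows that its image traces a total length equal to $l_x^-(\mathcal{S}_x^-M)$ while wrapping $d$ times around $\mathcal{S}_x^+M$, whence
$$l_x^-(\mathcal{S}_x^-M)=d\cdot l_x^+(\mathcal{S}_x^+M).$$

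The main obstacle is that monotonicity and the isometry property only yield $d\ge 1$, and $d=l_x^-/l_x^+$ is exactly the ratio we want to control, so this relation is not self-contained. I would close the gap by running the identical construction with the backward perturbations $c_{-,r}$ of the outgoing rays, using Lemma~\ref{lemma-monotonous-2} in place of Lemma~\ref{lemma-monotonous}, to obtain a strictly monotone local isometry $\Psi:\mathcal{S}_x^+M\to\mathcal{S}_x^-M$ of some degree $d'\ge 1$, giving $l_x^+(\mathcal{S}_x^+M)=d'\cdot l_x^-(\mathcal{S}_x^-M)$. Combining the two relations forces $d\,d'=1$, hence $d=d'=1$ and $l_x^+(\mathcal{S}_x^+M)=l_x^-(\mathcal{S}_x^-M)$, as desired. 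The remaining technical points, namely the continuity of $\Phi$ across the finitely many directions $v^-$ where the combinatorial type (the number $m$ of crossings) jumps and the behaviour when some $u_i$ becomes tangent to an edge, are controlled by the continuity already built into the edge-crossing equation~(\ref{geodesic-equation-crossing-edge}) and do not affect the length computation, which is additive over the generic arcs.
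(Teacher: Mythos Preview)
Your approach is essentially the same as the paper's: both build the forward scattering map $\Phi:\mathcal{S}_x^-M\to\mathcal{S}_x^+M$ from the outgoing ray of the left perturbation $c_{+,l}$, and both invoke the Landsberg chain~(\ref{2003}) to conclude that $\Phi$ preserves Hessian arc length. The paper simply asserts that as $v_s^-$ traverses $\mathcal{S}_x^-M$ once the image $v_s^+$ traverses $\mathcal{S}_x^+M$ once, whereas you supply the missing degree argument by running the backward map $\Psi$ and forcing $dd'=1$; this is a genuine improvement in rigor over the paper's sketch, not a different method.
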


\subsection{The answer to Question \ref{question}}
Now we are ready to answer Question \ref{question}.
For the extension of a geodesic in the positive direction, we have
\begin{theorem} \label{main-thm-1}
 Let $M$ be a piecewise flat Finsler surface and $T_xM$ be the tangent cone at a vertex $x\in M$.
Let $\{c(t),t\in(-\infty,0]\}$ be a unit speed geodesic in the tangent cone $T_xM$ with $c(0)=x$. Then the following statements are equivalent:
\begin{description}
\item{\rm (1)} $c(t)$ can be extended to a geodesic defined on  $(-\infty,\infty)$.
\item{\rm (2)} $K(x,\dot{c}^-(0))\leq 0$.
\item{\rm (3)} There exists a ray $R_{x,w^+}$ such that $w^+\neq -\dot{c}^-(0)$ and  it does not intersect  any
    $\{c_{+,l}(t),t\in(-\infty,\infty)\}$ or
    $\{c_{+,r}(t),t\in(-\infty,\infty)\}$.
\item{\rm (4)} There exists a ray $R_{x,w^+}$ which does not intersect
 any of the geodesics $\{c_{x_0,w'^+}(t),$ $t\in[-1,\infty)\}$ with $c(-1)=x_0$,
and $w'^+\in\mathcal{S}_{x_0}^{+,l}$ or $\mathcal{S}_{x_0}^{+,r}$.
\end{description}
\end{theorem}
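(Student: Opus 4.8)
The plan is to work entirely inside the tangent cone $T_xM$ and to establish the three equivalences $(3)\Leftrightarrow(2)$, $(3)\Leftrightarrow(4)$ and $(1)\Leftrightarrow(4)$, which together yield the theorem. The geometric picture organizing everything is the following. Writing $v^-=\dot c^-(0)$, both perturbation geodesics $c_{+,l}$ and $c_{+,r}$ come in asymptotically parallel to the incoming ray $R_{x,v^-}$, so the two monotone projection curves $\mathrm{Pr}_x^+(\{c_{+,l}(t)\})$ and $\mathrm{Pr}_x^+(\{c_{+,r}(t)\})$ in $\mathcal{S}_x^+M$ both emanate, as $t\to-\infty$, from the single direction $-v^-/F(-v^-)$, and by the choice of the two sides they sweep away from it in opposite senses, through total angles $\theta_l=\sphericalangle_x^+(\{c_{+,l}(t),t\in(-\infty,\infty)\})$ and $\theta_r=\sphericalangle_x^+(\{c_{+,r}(t),t\in(-\infty,\infty)\})$ respectively. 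The first step I would record is the elementary observation that a ray $R_{x,w^+}$ meets $c_{+,l}$ (resp. $c_{+,r}$) if and only if $w^+$ lies on the corresponding projected arc: an intersection point $y$ satisfies $\mathrm{Pr}_x^+(y)=w^+$, and conversely.

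Granting this, $(3)\Leftrightarrow(2)$ becomes a covering count. By Definition~\ref{def-curvature-piecewise-flat-surface} we have $K(x,v^-)=\theta_l+\theta_r-l_x^+(\mathcal{S}_x^+M)$, so $K(x,v^-)\le 0$ means exactly $\theta_l+\theta_r\le l_x^+(\mathcal{S}_x^+M)$. Two arcs issuing from a common base point in opposite directions on a circle of length $l_x^+(\mathcal{S}_x^+M)$ fail to cover the whole circle precisely when the sum of their multiplicity-counted lengths does not exceed $l_x^+(\mathcal{S}_x^+M)$; in that regime each of $\theta_l,\theta_r$ is itself at most $l_x^+(\mathcal{S}_x^+M)$, so no winding ambiguity arises, and the uncovered set is a closed arc whose interior, if nonempty, furnishes directions $w^+\ne -v^-$ meeting neither perturbation. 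Conversely $\theta_l+\theta_r>l_x^+(\mathcal{S}_x^+M)$ forces the two arcs to overlap and cover every direction except possibly the common base point $-v^-$, which is excluded in $(3)$ anyway; hence $(3)$ fails. The boundary case $K(x,v^-)=0$ needs a separate look: there the two arcs meet exactly at their far endpoints $w_l^+=w_r^+$, and since this common endpoint is the asymptotic direction of both geodesics, the parallel ray $R_{x,w_l^+}$ meets neither of them and is distinct from $-v^-$, so $(3)$ still holds.

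For $(3)\Leftrightarrow(4)$ I would invoke Lemma~\ref{lemma-monotonous}(2) and its right-side analogue directly: a ray from $x$ meets $c_{+,l}$ iff it meets some $\{c_{x_0,w'^+}(t),t\in(-1,\infty)\}$ with $w'^+\in\mathcal{S}_{x_0}^{+,l}$, and symmetrically on the right, so the family $\{c_{x_0,w'^+}\}$ and the pair $\{c_{+,l},c_{+,r}\}$ are crossed by exactly the same rays. Here the excluded direction is handled automatically by the closed versus open interval: in $(4)$ one uses $t\in[-1,\infty)$, and since every $c_{x_0,w'^+}$ starts at $x_0\in R_{x,-v^-}$, the direction $w^+=-v^-$ always fails $(4)$, while for every other $w^+$ the point $x_0$ does not lie on $R_{x,w^+}$ and the two intervals give the same intersections. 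Finally, for $(1)\Leftrightarrow(4)$ I would pass through the minimizing characterization. Using the positive scalar multiplication argument of Lemma~\ref{lemma-global-minimizing}, the two-ray configuration $R_{x,v^-}\cup R_{x,w^+}$ is a geodesic iff it is a minimizing path from $x_0=c(-1)$ to a point $x_2$ on $R_{x,w^+}$. Any competing minimizing path from $x_0$ is a geodesic issuing from $x_0$; by Theorem~\ref{lemma-geodesic-not-passing-vertex-x} it either starts in the direction $v^-$, and is then the candidate itself, or starts in a perturbed direction, and is then one of the $c_{x_0,w'^+}$. If $(4)$ holds, no such $c_{x_0,w'^+}$ reaches a point of $R_{x,w^+}$, so the candidate is minimizing and $(1)$ holds; if $(4)$ fails, some $c_{x_0,w'^+}$ crosses $R_{x,w^+}$ at a point $y$, and the strict convexity of the edge-crossing length functional (Lemma~\ref{lemma-2}) shows the around-the-vertex geodesic from $x_0$ to $y$ is strictly shorter than the bent path through $x$, so $(1)$ fails.

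The step I expect to be the main obstacle is the covering count for $(3)\Leftrightarrow(2)$, specifically pinning down rigorously that the two projected arcs share the base direction $-v^-$ and open in opposite senses, and disposing cleanly of the equality case $K(x,v^-)=0$ together with the excluded direction $-\dot c^-(0)$. A secondary technical difficulty lies in the strict inequality for $(1)\Leftrightarrow(4)$: one must verify that an around-the-vertex geodesic genuinely beats the bent through-$x$ path, which requires applying the strict convexity of Lemma~\ref{lemma-2} along the full finite chain of edges crossed between $x_0$ and the intersection point $y$, rather than a single-edge comparison.
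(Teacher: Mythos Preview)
Your approach is essentially the same as the paper's: the equivalences $(3)\Leftrightarrow(4)$ via Lemma~\ref{lemma-monotonous}, $(2)\Leftrightarrow(3)$ via the covering count of the two projected arcs based at $-v^-$, and $(1)\Leftrightarrow(4)$ via Lemma~\ref{lemma-global-minimizing} together with the strict convexity of Lemma~\ref{lemma-2} all match the paper's argument, and your treatment of the boundary case $K(x,v^-)=0$ is in fact more explicit than the paper's. The one point to make explicit in $(4)\Rightarrow(1)$ is the \emph{existence} of a minimizing geodesic from $x_0$ to a chosen point on $R_{x,w^+}$, which the paper secures by forward-referencing Theorem~\ref{lemma-local-minimizing}; without that, your sentence ``any competing minimizing path from $x_0$ is a geodesic'' presupposes a minimizer rather than producing one.
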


\begin{proof}
The equivalence between (3) and (4) has been established  in  Lemma \ref{lemma-monotonous}.

\textbf{$(2)\Rightarrow (3)$}\quad
If $K(x,\dot{c}(0)^-)\leq 0$, then by Definition \ref{define-curvature-1},  $\mathrm{Pr}_x^+\{c_{+,l}(t),t\in(-\infty,\infty)\}$
and $\mathrm{Pr}_x^-\{c_{+,r}(t),t\in(-\infty,\infty)\}$ are disconnected
open curves in the circle $\mathcal{S}_x^+M$, i.e.,  there exists a vector
$w^+\neq-\dot{c}(0)^-$ in
$$\mathcal{S}_x^+M\backslash
(\mathrm{Pr}_x^+\{c_{+,l}(t),t\in(-\infty,\infty)\}
\cup\mathrm{Pr}_x^-\{c_{+,r}(t),t\in(-\infty,\infty)\}).$$
Notice that $\mathrm{Pr}_x^+\{c_{+,l}(t),t\in(-\infty,\infty)\}$ and $\mathrm{Pr}_x^-\{c_{+,r}(t),t\in(-\infty,\infty)\}$ are independent of the particular choice of the perturbations whenever the side has been chosen. So
the ray $R_{x,w^+}$ has no intersection with any geodesics $\{c_{+,l}(t),t\in(-\infty,\infty)\}$ and
$\{c_{+,r}(t),t\in(-\infty,\infty)$.

$(3)\Rightarrow (2)$\quad  It is easily seen that the outgoing tangent vector $w^+\neq-\dot{c}(0)^-$ at $x$ is  contained in
$$\mathcal{S}_x^+M\backslash(\mathrm{Pr}_x^+\{c_{+,l}(t),t\in(-\infty,\infty)\}
\cup\mathrm{Pr}_x^-\{c_{+,r}(t),t\in(-\infty,\infty)\}),$$
and $-\dot{c}(0)^-$ is a common end point of
$\mathrm{Pr}_x^+\{c_{+,l}(t),t\in(-\infty,\infty)\}$ and
$\mathrm{Pr}_x^-\{c_{+,r}(t),t\in(-\infty,\infty)\}$. So
$$\mathrm{Pr}_x^+\{c_{+,l}(t),t\in(-\infty,\infty)\}
\cap\mathrm{Pr}_x^-\{c_{+,r}(t),t\in(-\infty,\infty)\}=\emptyset,$$
which implies that
$$K(x,v^-)=\sphericalangle_x^+(\{c_{+,l}(t),t\in(-\infty,\infty)\})
+\sphericalangle_x^+(\{c_{+,r}(t),t\in(-\infty,\infty)\})-
l_x^+(\mathcal{S}_x^+M)\leq 0.$$

 $(1)\Rightarrow (4)$\quad   Let $\{c(t),t\in(-\infty,\infty)$ be the unit speed geodesic consisting of two rays. Assume conversely that there exists a
 geodesic $\{c_{x_0,w'^+}(t),t\in[-1,\infty)\}$, where
$c(-1)=c_{x_0,w'^+}(-1)=x_0$,
$w'^+\in\mathcal{S}_{x_0}^+M\backslash \{\pm\dot{c}(-1)^+\}$,
and $c(t')=c_{x_0,w'^+}(t'')=x_1$ with $t'>0$. Then by Lemma \ref{lemma-2},
we have $l(\{c_{x_0,w'^+}(t),t\in[-1,t'']\})<l(\{c(t),t\in[-1,t']\})$.
So $c(t)$ is not a minimizing geodesic from $x_0$ to $x_1$. This is a contradiction
with Lemma \ref{lemma-global-minimizing}.

$(4)\Rightarrow (1)$\quad By Theorem \ref{lemma-local-minimizing} below,  for any two different points $x'$ and $x''$ in the tangent cone $T_xM$,
there exists a minimizing geodesic connecting  $x'$ and $x''$.
Let $x'=c(-1)$ and $x''\neq x$ be any point on the ray indicated in (4). Then by
Lemma \ref{lemma-local-minimizing}, and our assumptions in (4), the  minimizing geodesic from $x'$ to $x''$ must be the union of $L_{x',x}$ and $L_{x,x''}$, i.e.,  $c(t)$ can be extended to a geodesic consisting of two rays.

This completes the proof of Theorem \ref{main-thm-1}.
\end{proof}

For the extension of a geodesic in the negative direction, we have

\begin{theorem} \label{main-thm-2}
 Let $M$ be a piecewise flat Finsler surface and $T_xM$ be the tangent cone at a vertex $x\in M$.
Let $\{c(t),t\in[0,-\infty)\}$ be a unit speed geodesic in the tangent cone $T_xM$ with $c(0)=x$. Then the following statements are equivalent:
\begin{description}
\item{\rm (1)} $c(t)$ can be extended to a geodesic defined on $(-\infty,\infty)$.
\item{\rm (2)} $K(x,\dot{c}(0)^+)\leq 0$.
\item{\rm (3)} There exists a ray $R_{x,w^-}$ such that $w^-\neq -\dot{c}(0)^+$ and it does not   intersect  any
    $\{c_{-,l}(t),t\in(-\infty,\infty)\}$ or
    $\{c_{-,r}(t),t\in(-\infty,\infty)\}$.
\item{\rm (4)} There exists a ray $R_{x,w^-}$ which does not  intersect
 any of the geodesics $\{c_{x_0,w'^-}(t)$, $t\in(-\infty,1]\}$,  where $c(1)=x_0$,
and $w'^+\in\mathcal{S}_{x_0}^{-,l}$ or $\mathcal{S}_{x_0}^{-,r}$.
\end{description}
\end{theorem}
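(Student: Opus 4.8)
The plan is to mirror the proof of Theorem \ref{main-thm-1} line by line, interchanging the roles of outgoing and incoming data throughout. The geodesic $\{c(t),t\in[0,\infty)\}$ to be extended in the negative direction plays the role that $\{c(t),t\in(-\infty,0]\}$ played there; the initial tangent vector $\dot{c}(0)^+$ replaces $\dot{c}(0)^-$; the incoming perturbations $c_{-,l}(t)$, $c_{-,r}(t)$ and $c_{x_0,w'^-}(t)$ replace the outgoing ones; the incoming indicatrix $\mathcal{S}_x^-M$ together with the projection $\mathrm{Pr}_x^-$ and the angle $\sphericalangle_x^-$ replace their outgoing counterparts; and Lemma \ref{lemma-monotonous-2} and Definition \ref{define-curvature-2} replace Lemma \ref{lemma-monotonous} and Definition \ref{define-curvature-1}. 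The remaining ingredients, namely Lemma \ref{lemma-global-minimizing}, Lemma \ref{lemma-2} and Theorem \ref{lemma-local-minimizing}, are already symmetric in the two directions and may be invoked unchanged.

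First I would establish $(3)\Leftrightarrow(4)$, which is immediate from part (2) of Lemma \ref{lemma-monotonous-2}: a ray from $x$ meets the limiting perturbation $\{c_{-,l}(t),t\in(-\infty,\infty)\}$ (respectively $c_{-,r}$) if and only if it meets some $\{c_{x_0,w'^-}(t),t\in(-\infty,1)\}$ with $w'^-\in\mathcal{S}_{x_0}^{-,l}$ (respectively $\mathcal{S}_{x_0}^{-,r}$), so avoiding all the $c_{-,l},c_{-,r}$ is the same as avoiding all the $c_{x_0,w'^-}$. Next I would prove $(2)\Leftrightarrow(3)$ from Definition \ref{define-curvature-2}. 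By that definition,
\[
K(x,\dot{c}(0)^+)=\sphericalangle_x^-(\{c_{-,l}(t),t\in(-\infty,\infty)\})+\sphericalangle_x^-(\{c_{-,r}(t),t\in(-\infty,\infty)\})-l_x^-(\mathcal{S}_x^-M),
\]
while the two monotone projection curves $\mathrm{Pr}_x^-\{c_{-,l}\}$ and $\mathrm{Pr}_x^-\{c_{-,r}\}$ share the endpoint $-\dot{c}(0)^+$ and sweep $\mathcal{S}_x^-M$ in opposite senses. Thus $K\le 0$ holds exactly when these two arcs are disjoint away from $-\dot{c}(0)^+$, that is, exactly when their complement in $\mathcal{S}_x^-M$ contains some $w^-\neq-\dot{c}(0)^+$; the ray $R_{x,w^-}$ is then the ray required in (3), and conversely such a $w^-$ forces disjointness and hence $K\le0$. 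This is verbatim the gap argument used for Theorem \ref{main-thm-1}.

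Finally I would treat $(1)\Leftrightarrow(4)$. For $(1)\Rightarrow(4)$: if $c$ extends to a two-ray geodesic $R_{x,\dot{c}(0)^-}\cup R_{x,\dot{c}(0)^+}$ on $(-\infty,\infty)$, then it is globally minimizing by Lemma \ref{lemma-global-minimizing}; taking the candidate ray to be the backward continuation $R_{x,\dot{c}(0)^-}$, any intersection of this ray with a perturbation $c_{x_0,w'^-}$ at a point $c(t')$ with $t'<0$ would, by Lemma \ref{lemma-2}, furnish a strictly shorter competitor between two points of $c$, contradicting minimality; hence this ray meets no perturbation and (4) holds. For $(4)\Rightarrow(1)$: given the ray of (4), put $x'=c(1)$ and pick $x''\neq x$ on that ray; by Theorem \ref{lemma-local-minimizing} there is a minimizing geodesic from $x'$ to $x''$, and the hypothesis of (4) excludes every vertex-avoiding geodesic of type (1) in Corollary \ref{cor-1}, so the minimizer must pass through $x$ and therefore be $L_{x',x}\cup L_{x,x''}$, which is the desired extension of $c$ across $x$.

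The main obstacle is the implication $(4)\Rightarrow(1)$, the only step that needs genuine input rather than a formal symmetry: it rests on the existence of minimizing geodesics in $T_xM$ (Theorem \ref{lemma-local-minimizing}) combined with the classification in Corollary \ref{cor-1}, which together rule out that the minimizer from $x'$ to $x''$ avoids the vertex. Once these are available, the only care required is to verify that each occurrence of \emph{left}/\emph{right} and of the incoming indicatrix remains consistent after the outgoing-to-incoming substitution, which is routine.
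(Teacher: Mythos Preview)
Your proposal is correct and follows exactly the approach the paper takes: the paper's own proof of Theorem \ref{main-thm-2} is simply ``The proof is similar to the previous one, so we omit it.'' Your detailed mirroring of the argument for Theorem \ref{main-thm-1}, with Lemma \ref{lemma-monotonous-2} and Definition \ref{def-curvature-piecewise-flat-surface} (2) in place of their outgoing counterparts, is precisely what is intended. One small slip to correct in your $(4)\Rightarrow(1)$ step: since the metric need not be reversible, the minimizing geodesic should run from $x''$ (the point on the backward ray $R_{x,w^-}$) to $x'=c(1)=x_0$, not the other way around; only then does a vertex-avoiding minimizer land at $x_0$ with some incoming tangent $w'^-\in\mathcal{S}_{x_0}^{-,l}\cup\mathcal{S}_{x_0}^{-,r}$ and hence coincide with one of the perturbations $c_{x_0,w'^-}$ excluded by (4).
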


The proof is similar to the previous one, so we omit it.

The following is the restatement of
Theorem \ref{main-thm-1} and \ref{main-thm-2} in the context of $M$ instead of $T_xM$.
\begin{corollary}
 Let $M$  be a piecewise flat Finsler surface and $T_xM$ be the tangent cone at a vertex $x\in M$. Then a unit speed geodesic
$\{c(t), t\in(-\epsilon,0]$ (resp. $\{c(t),t\in[0,\epsilon)\}$) on $M$ with $c(0)=x$
can be extended at $x$ if and only if $K(x,\dot{c}(0)^-)\leq 0$ (resp. $K(x,\dot{c}(0)^-)\geq 0$).
\end{corollary}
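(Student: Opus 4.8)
The plan is to obtain this corollary as a direct translation of Theorems \ref{main-thm-1} and \ref{main-thm-2}, whose statements already live in the tangent cone $T_xM$. The only genuine work is to pass from a geodesic segment on $M$ that abuts the vertex to the ray-in-cone setting of those theorems, and to check that the notion ``extendable at $x$ on $M$'' coincides with ``extendable to $(-\infty,\infty)$ in $T_xM$''.

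First I would use the isometric embedding of a neighborhood of $x$ in $M$ into $T_xM$ recorded in Section 3; this lets me regard $\{c(t),t\in(-\epsilon,0]\}$ as a geodesic segment in $T_xM$ with $c(0)=x$. By Corollary \ref{cor-1}, a maximally extended geodesic that reaches the vertex is of type (2) or (3), so the incoming segment is the terminal part of a single ray $R_{x,\dot{c}(0)^-}$. Invoking the positive scalar invariance from Lemma \ref{lemma-global-minimizing}, I can rescale this ray to $\{c(t),t\in(-\infty,0]\}$ without changing whether it admits an extension through $x$, which places it exactly under the hypotheses of Theorem \ref{main-thm-1}. The outgoing case $\{c(t),t\in[0,\epsilon)\}$ is treated identically, producing the ray $\{c(t),t\in[0,\infty)\}$ with base vector $\dot{c}(0)^+$ required by Theorem \ref{main-thm-2}.

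Next I would verify the equivalence of the two notions of extendability. If $c$ is extendable at $x$ on $M$, then under the embedding the extension is a geodesic through $x$ in $T_xM$; since it passes the vertex at $t=0$, by Corollary \ref{cor-1} it must be of type (3), i.e. a union of two rays, which by scaling is defined on all of $(-\infty,\infty)$. Conversely, if the ray extends to a two-ray geodesic in $T_xM$, its portion near $x$ lies inside the embedded neighborhood and supplies the required extension of $c$ on $M$. Hence extendability on $M$ and extendability in $T_xM$ are the same condition, and the equivalence $(1)\Leftrightarrow(2)$ of Theorem \ref{main-thm-1} (respectively Theorem \ref{main-thm-2}) delivers the stated curvature criterion at once.

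I do not expect a genuine obstacle, since all the analytic content is already packaged in Theorems \ref{main-thm-1} and \ref{main-thm-2}. The one point demanding care is the local-to-global bookkeeping: the extension on $M$ is only asked for locally, whereas the cone theorems speak of geodesics defined on $(-\infty,\infty)$. This gap is bridged entirely by the scaling invariance of Lemma \ref{lemma-global-minimizing}, which shows that a local extension near the vertex in $T_xM$ automatically propagates to a globally defined two-ray geodesic, so that the local criterion on $M$ and the global criterion in the cone are genuinely identical.
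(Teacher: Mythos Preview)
Your proposal is correct and matches the paper's approach: the paper simply states that this corollary is a restatement of Theorems~\ref{main-thm-1} and~\ref{main-thm-2} in the context of $M$ rather than $T_xM$, and gives no further proof. Your write-up just makes explicit the local-to-cone passage via the isometric neighborhood embedding and scaling invariance that the paper leaves implicit.
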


From the proof of Theorem \ref{main-thm-1} and \ref{main-thm-2}, we also get the following

\begin{corollary}
 Let $M$ be a  piecewise flat Finsler surface  and $T_xM$  the tangent cone at the vertex $x\in M$.
 If the rays  in (3) and (4) of Theorem \ref{main-thm-1} and \ref{main-thm-2} exist, then they exhaust  all the extensions of the geodesic $c(t)$ for $t\in(-\infty,\infty)$. When $K(x,v^\pm)<0$, there
are infinitely many such extensions. When $K(x,v^\pm)=0$, there exists a unique
such extension. When $K(x,v^\pm)>0$, there exists no such extensions.
\end{corollary}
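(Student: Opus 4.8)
The plan is to reduce the entire statement to an analysis of the two projected arcs $A=\mathrm{Pr}_x^+\{c_{+,l}(t),t\in(-\infty,\infty)\}$ and $B=\mathrm{Pr}_x^+\{c_{+,r}(t),t\in(-\infty,\infty)\}$ inside the indicatrix circle $\mathcal{S}_x^+M$, exactly as in the proof of Theorem \ref{main-thm-1}; the outgoing case is symmetric, working in $\mathcal{S}_x^-M$ via Theorem \ref{main-thm-2}, so I treat only the incoming direction $v^-$. First I would record the exhaustion claim. By Corollary \ref{cor-1}, any maximal geodesic in $T_xM$ extending the incoming ray $\{c(t),t\in(-\infty,0]\}$ through $x$ must be of type (3) there, i.e. $R_{x,\dot c^-(0)}\cup R_{x,w^+}$ for some outgoing $w^+\neq-\dot c^-(0)$, and conversely every such two–ray curve with this incoming ray is an extension. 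By the equivalence $(1)\Leftrightarrow(3)\Leftrightarrow(4)$ of Theorem \ref{main-thm-1}, the admissible directions $w^+$ are precisely those for which $R_{x,w^+}$ misses every perturbation geodesic, that is $w^+\notin A\cup B$. Hence the extensions are in bijection with $\bigl(\mathcal{S}_x^+M\setminus(A\cup B)\bigr)\setminus\{-\dot c^-(0)\}$, and the rays of (3)/(4) exhaust them.

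The counting then follows from the geometry of $A$ and $B$. Both are monotone open arcs issuing from the common open endpoint $-\dot c^-(0)$ in opposite senses (this is precisely what the proof of $(3)\Rightarrow(2)$ established), with lengths $\sphericalangle_x^+(\{c_{+,l}\})$ and $\sphericalangle_x^+(\{c_{+,r}\})$; by Definition \ref{define-curvature-1} their total length equals $K(x,\dot c^-(0))+l_x^+(\mathcal{S}_x^+M)$. When $K<0$ this total is strictly below $l_x^+(\mathcal{S}_x^+M)$, so neither arc winds around the circle and $A\cup B$ is a proper sub–arc; its complement is a nondegenerate arc of length $-K>0$, each of whose directions yields an extension, so there are infinitely many. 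When $K=0$ the two arcs sweep out the whole circle exactly once and their far endpoints coincide at a single point $p$, which satisfies $p\neq-\dot c^-(0)$ because the perturbation arcs are nondegenerate; since $A$ and $B$ are open at $p$, it lies in the complement and, by $(3)\Rightarrow(1)$, furnishes the unique extension. When $K>0$ condition (2) of Theorem \ref{main-thm-1} fails, so (1) fails and no extension exists, matching the assertion.

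The step I expect to need the most care is pinning down the endpoint behaviour of $A$ and $B$, especially in the borderline case $K=0$. I must verify that the limiting directions as $t\to\pm\infty$ (the ray directions of $c_{+,l}$ and $c_{+,r}$) and the limiting direction $-\dot c^-(0)$ as the perturbation vanishes are approached but never attained, so that $A$ and $B$ are genuinely open arcs; only then is the single coincident far endpoint $p$ actually free, making the extension unique rather than absent. This rests on the fact that none of the rays or segments in the decomposition (\ref{decomposition-of-geodesic}) is radial, guaranteed by Theorem \ref{lemma-geodesic-not-passing-vertex-x}, together with the continuous dependence and containment recorded in Lemma \ref{lemma-monotonous}(2). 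The remaining parts — the exhaustion claim and the $K<0$ and $K>0$ cases — follow directly from Theorem \ref{main-thm-1} and Definition \ref{define-curvature-1}, with the outgoing statement proved identically through Theorem \ref{main-thm-2}.
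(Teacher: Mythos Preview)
Your proposal is correct and matches the paper's approach: the paper gives no separate proof of this corollary, merely stating that it follows ``from the proof of Theorem \ref{main-thm-1} and \ref{main-thm-2},'' and your argument does precisely that, extracting the bijection between extensions and the complement $\mathcal{S}_x^+M\setminus(A\cup B\cup\{-\dot c^-(0)\})$ from the equivalence $(1)\Leftrightarrow(3)\Leftrightarrow(4)$ and then reading off the count from Definition \ref{define-curvature-1}. Your care about the openness of the arcs in the $K=0$ case is exactly the detail the paper leaves implicit.
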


 This also gives an interpretation to Lemma \ref{geodesic-extension-crossing-edge}.
 In fact,   if $x$ is not a vertex, then the curvature is always $0$. Thus the extension of a geodesic at $x$  exists uniquely.
The notions of curvature we  define in this paper plays the same role as their counter part in the Riemannian case, i.e.,  for $v^\pm\in\mathcal{S}_x^\pm M$, $-K(x,v^\pm)$ is
the measure for all the geodesics extending $R_{x,v^\pm}$ at $x$. When $K(x,v^\pm)>0$, the virtual measure for such extensions is negative. It
explains why there exists no such extensions in this case.

At the final part of this section, we prove a general result on the existence of minimizing geodesic.

\begin{theorem} \label{lemma-local-minimizing}
For any two different points $x'$ and $x''$ in the tangent cone $T_xM$,
there exists a minimizing geodesic from $x'$ to $x''$.
\end{theorem}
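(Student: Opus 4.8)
The plan is to realize the distance $d(x',x'')$ as the length of an actual path by a soft compactness argument, and then to use the flatness inside each cone together with the structure theorem to recognize that this shortest path is a geodesic in the sense of the paper.

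First I would record the two features of $T_xM$ that make compactness available. Since $T_xM=\bigcup_{i=1}^{n}C_{x,i}$ is a \emph{finite} union of closed Minkowski cones glued along the rays at $x$, each $F_i$ is comparable to a fixed Euclidean norm with constants that are uniform over the finitely many cones; in particular there is a constant $\lambda\ge 1$ with $F_i(-v)\le\lambda F_i(v)$, so the asymmetry of $d$ is uniformly controlled and the symmetrized distance is a genuine metric comparable to $d$. Because a path of bounded $F$-length has bounded Euclidean displacement, the set $B=\{z\in T_xM:\ d(x',z)\le D+1\}$, where $D=d(x',x'')$ is finite (the broken path $L_{x',x}\cup L_{x,x''}$ already joins $x'$ to $x''$), is contained in a bounded region meeting only the finitely many cones and therefore has compact closure $\overline{B}$.

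Next I take a minimizing sequence $\gamma_n$ with $l(\gamma_n)\to D$, reparametrized on $[0,1]$ at constant $F$-speed $l(\gamma_n)\le D+1$. Using $\lambda$, these curves are uniformly Lipschitz with respect to the comparable symmetric metric, hence uniformly equicontinuous with values in the compact set $\overline{B}$, so Arzel\`a--Ascoli yields a subsequence converging uniformly to a path $\gamma$ from $x'$ to $x''$. The $F$-length is lower semicontinuous under uniform convergence, since it equals the supremum over partitions of $\sum_i d(\gamma(t_i),\gamma(t_{i+1}))$, a supremum of continuous functionals; therefore $l(\gamma)\le\liminf l(\gamma_n)=D$, and so $l(\gamma)=d(x',x'')$, i.e. $\gamma$ is a shortest path.

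Finally I would upgrade $\gamma$ to a geodesic. Being globally minimizing it is locally minimizing; inside the interior of each cone the metric is flat, so $\gamma$ is there a straight segment, and at a non-vertex edge point it satisfies the edge-crossing equation by Lemma \ref{lemma-geodesic-crossing-edge}. If $\gamma$ avoids $x$, then Theorem \ref{lemma-geodesic-not-passing-vertex-x} shows it crosses only finitely many edges and is thus a finite union of segments; if $\gamma$ meets $x$ (necessarily only once, since a subarc returning to $x$ would be a nonconstant shortest path from $x$ to $x$, impossible), then Corollary \ref{cor-1} forces each half to be a single ray, so $\gamma=L_{x',x}\cup L_{x,x''}$. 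In either case, after reparametrizing at constant speed, $\gamma$ is a minimizing geodesic from $x'$ to $x''$. The main obstacle is precisely what is new in the Finsler setting: the norms $F_i$ differ from cone to cone, so one cannot develop the cones into a single flat plane and read off the minimizer as a straight line, and $F$ is non-reversible, so $d$ is only a quasi-metric on the noncompact space $T_xM$. Both difficulties are absorbed by the uniform comparability of the $F_i$ to a Euclidean norm, which at once tames the asymmetry through $\lambda$ and delivers precompactness of metric balls, allowing the compactness argument to replace any explicit construction.
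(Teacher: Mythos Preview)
Your proof is correct and takes a genuinely different route from the paper's. The paper argues combinatorially: after reducing to piecewise linear paths whose break points lie on the edge rays, it uses the triangle inequality to show that these break points stay uniformly away from $x$ and within a bounded annulus, deduces a uniform bound on the number of segments, and then passes to the limit in the finitely many crossing points. Your approach instead treats $T_xM$ as a locally compact length space, exploits the uniform comparability of the finitely many $F_i$ to a Euclidean norm to get equicontinuity and precompactness, applies Arzel\`a--Ascoli and lower semicontinuity of length, and only afterwards invokes the structure results to recognize the limit as a geodesic. The paper's argument is more elementary and self-contained; yours is cleaner, and it would transfer essentially verbatim to higher-dimensional tangent cones or more general piecewise flat Finsler spaces.

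One remark on the upgrade step: Theorem~\ref{lemma-geodesic-not-passing-vertex-x} and Lemma~\ref{lemma-geodesic-crossing-edge} as stated take as input a geodesic in the paper's sense (piecewise smooth, constant speed), whereas your $\gamma$ is a priori only a Lipschitz minimizer. You should first argue directly that $\gamma$ is piecewise linear: on every maximal subinterval mapping into the interior of a cone it is a straight segment (strict convexity of the Minkowski indicatrix), and the edge-crossing times cannot accumulate at any $\gamma(t^*)\neq x$ because Theorem~\ref{theorem-000} gives a unique locally minimizing path near such a point with a single crossing. Since $\gamma$ either avoids $x$ or meets it along a single (possibly degenerate) interval, this yields finitely many crossings, after which your invocation of Theorem~\ref{lemma-geodesic-not-passing-vertex-x} and Corollary~\ref{cor-1} is legitimate.
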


\begin{proof}
When $x$ is in the complement of edges and vertices, the theorem is obvious.
When $x$ is an edge point but not a vertex,
it has been proven by Theorem \ref{theorem-000}.
So we may assume $x$ is a vertex, and then $T_xM$ contains at least 3 Minkowski cones.

First we consider the case $x'=x$. We claim that $L_{x,x''}$ is the minimizing geodesic from $x'$ to $x''$. Assume conversely that this is not true.
Then there  exists a shorter path $\{p(t),x,x''\}$ from $x$ to $x''$, consisting
of a finite sequence of straight line segments. Using the triangular inequality repeatedly, we will get $l(\{p(t),x,x''\})\geq l(L_{x,x''})$, which is a contradiction. The argument for the case $x''=x$ is similar.

Now we assume $x'\neq x$ and $x''\neq x$.
If $x'$ and $x''$ are contained in the same Minkowski cone such that
$d(x',x'')=l(L_{x',x''})$, then $L_{x',x''}$ is the minimizing geodesic
from $x'$ to $x''$.
If $d(x',x'')=l(L_{x',x})+l(L_{x,x''})$, then the two straight line segments from $x'$ to $x$, and  from $x$ to $x''$ gives the minimizing geodesic from $x'$ to $x''$.

 If the minimizing geodesic from $x'$ to $x''$ can not be realized as above, then there exists $c>0$
satisfying the following conditions:
\begin{description}
\item{\rm (1)} If $x'$ and $x''$ are contained in the same Minkowski cone, then $d(x',x'')+c<l(L_{x',x''})$.
\item{\rm (2)} $c<d(x',x)+d(x,x'')-d(x',x'')$.
\end{description}
Then we can find a sequence of piecewise linear paths $\{p_n(t),x',x''\}$ from $x'$ to $x''$, $n\in\mathbb{N}$, such that each path consists of a finite sequence of straight line segments, and its arc length satisfies
\begin{equation}\label{0002}
l(\{p_n(t),x',x''\})<d(x',x'')+\frac{c}{2n}.
\end{equation}
Then $l(\{p_n(t),x',x''\})<d(x',x)+d(x,x'')$,
and $l(\{p_n(t),x',x''\})<l(L_{x',x''})$  when $x'$ and $x''$ are in the same cone.

Using the triangular inequality repeatedly, we can make $\{p_n(t),x',x''\}$ even shorter by assuming the following:
\begin{description}
\item{\rm (1)} $\{p_n(t),x',x''\}=
    L_{x',x_{n,1}}\cup
    (\bigcup_{i=1}^{m_n-1}L_{x_{n,i},x_{n,i+1}})\cup
    L_{x_{n,m_n},x''}$.
\item{\rm (2)} The points $x_{n,1}$, $\ldots$, $x_{n,m_n}$ are on the edges.
\end{description}

If some $x_{n,i}$ is sufficiently close to $x$, say,  $d(x_{n,i},x)+d(x,x_{n,i})<c/6$, then we have
\begin{eqnarray*}
d(x',x'')+c &<& d(x',x)+d(x,x'')\leq l(\{p_n(t),x',x''\})+d(x_{n,i},x)+d(x,x_{n,i})\\
&<& d(x',x'')+\frac{5}{6}c,
\end{eqnarray*}
which is a contradiction.
 On the other hand, each $x_{n,i}$ can not be too far
away from $x$ either, because
\begin{eqnarray*}
d(x_{n,i},x)+d(x,x_{n,i})&\leq& l(\{p_n(t),x',x''\})+d(x,x')+d(x'',x)\\
&\leq& d(x',x)+d(x,x')+d(x'',x)+d(x,x'').
\end{eqnarray*}
If there exists a edge containing at least one of  the pairs $(x_{n,i},x_{n,i+1})$,   $(x', x_{n,1})$, or  $(x_{n,m_n}, x'')$, then
by  the triangular inequality there exists  a shorter path $p_n(t)$ (in this case the number $m_n$ will decrease at the same time). Notice that
our assumptions for the constant $c$ and the path $p_n(t)$ implies that
$x'$, $x''$, and the $x_{n,i}$'s can not be contained in the same edge.
To summarize, we can further require $p_n(t)$ to satisfy
\begin{description}
\item{\rm (3)} For each $i$, $c_1<d(x_{n,i},x)+d(x,x_{n,i})<c_2$, where $c_1$ and
$c_2$ are positive numbers independent of $n$ and $i$.
\item{\rm (4)} None of the straight line segments $L_{x',x_{n,1}}$, $L_{x_{n,1},x_{n,2}}$, $\ldots$, $L_{x_{n,m_n-1},x_{n,m_n}}$ and
    $L_{x_{n,m_n},x''}$ is contained in any edge.
\end{description}
By (3), there exists a constant $c_3>0$, independent of $n$ and $i$, such   that $d(x_{n,i},x_{n,i+1})>c_3$. So by (2), there exists  a universal bound $N>0$ such that $m_n<N$ for all $n$.

 Therefore, passing to suitable sub-sequence $\{p_n(t),x',x''\}$ with the same $m_n=m$ if necessary, we can get $\mathop{\lim}\limits_{n\rightarrow\infty} x_{n,i}=x_i$ simultaneously, $\forall 1\leq i\leq m$. Note that  all the paths $p_n(t)$ are parametrized by the arc length. Hence   $p(t)=\lim p_n(t)$ is a finite union of line segement, which
defines a minimizing geodesic from $x'$ to $x''$.
\end{proof}

\section{A combinational Gauss-Bonnet formula for connected piecewise flat Landsberg surface}

In this section, we assume $M$ is a connected piecewise flat Landsberg surface. The next
lemma follows immediately from the connectedness of $M$ and the Landsberg condition.

\begin{lemma} \label{lemma-theta-M} Let $M$ be a connected piecewise flat Landsberg surface. Then for any $x\in M$ which is not a vertex, $l_x^\pm(\mathcal{S}_x^\pm M)$ is a  constant.
\end{lemma}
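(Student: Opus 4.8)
The plan is to reduce everything to a single numerical invariant attached to each Minkowski plane and then propagate it across the surface. For a Minkowski plane $(\mathbb{R}^2,F)$, let $\Theta(F)$ denote the arc length of its full indicatrix circle measured with the metric induced by the Hessian $g_{ij}(y)$. At a point $x$ in the interior of a triangle $(T_i,F_i)$, the tangent cone $T_xM$ is the single Minkowski plane $(\mathbb{R}^2,F_i)$, the outgoing and incoming indicatrices both coincide with the full $F_i$-indicatrix, and hence $l_x^\pm(\mathcal{S}_x^\pm M)=\Theta(F_i)$. In particular the quantity is constant on the interior of each triangle, so it suffices to show (a) $\Theta(F_i)$ takes the same value $\Theta$ for all triangles, and (b) this value $\Theta$ is attained at every non-vertex edge point.

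For (a), I would use the Landsberg condition exactly as set up after Definition \ref{define-landsberg-berwald}. At any non-vertex point $x$ of an edge shared by $(T_1,F_1)$ and $(T_2,F_2)$, the edge-crossing map furnishes a Hessian isometry $\mathcal{S}_{x,1}^-M\to\mathcal{S}_{x,2}^+M$; switching the roles of the two triangles gives a second isometry $\mathcal{S}_{x,2}^-M\to\mathcal{S}_{x,1}^+M$, and because the edge-crossing map fixes the two directions tangent to the edge, these glue to an isometry of the full indicatrices $\mathcal{S}_x(T_1)\to\mathcal{S}_x(T_2)$. Since these full indicatrices have lengths $\Theta(F_1)$ and $\Theta(F_2)$, we obtain $\Theta(F_1)=\Theta(F_2)$. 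As $M$ is connected, any two triangles are joined by a chain of triangles meeting along edges, so $\Theta(F_i)$ is a global constant $\Theta$.

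For (b), fix a non-vertex edge point $x$ between $(T_1,F_1)$ and $(T_2,F_2)$, so that $\mathcal{S}_x^+M=\mathcal{S}_{x,1}^+M\cup\mathcal{S}_{x,2}^+M$ is a union of two semicircles meeting only at the edge directions. Represented as directions, $\mathcal{S}_{x,1}^+M$ (pointing into $C_{x,1}$) and $\mathcal{S}_{x,1}^-M$ (pointing out of $C_{x,1}$) are the two complementary semicircles of the full $F_1$-indicatrix, so their Hessian lengths sum to $\Theta(F_1)=\Theta$. Applying the edge-crossing isometry $\mathcal{S}_{x,1}^-M\to\mathcal{S}_{x,2}^+M$ to replace $\mathrm{len}(\mathcal{S}_{x,2}^+M)$ by $\mathrm{len}(\mathcal{S}_{x,1}^-M)$ gives
$$l_x^+(\mathcal{S}_x^+M)=\mathrm{len}(\mathcal{S}_{x,1}^+M)+\mathrm{len}(\mathcal{S}_{x,2}^+M)=\mathrm{len}(\mathcal{S}_{x,1}^+M)+\mathrm{len}(\mathcal{S}_{x,1}^-M)=\Theta.$$
Finally Proposition \ref{lemma-5} yields $l_x^-(\mathcal{S}_x^-M)=l_x^+(\mathcal{S}_x^+M)=\Theta$, so both quantities equal the same constant at every non-vertex point.

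The argument is essentially bookkeeping once the Landsberg isometry is in hand. The one step requiring care is (b): correctly identifying $\mathcal{S}_{x,i}^+M$ and $\mathcal{S}_{x,i}^-M$ as complementary semicircles of the respective full Minkowski indicatrices, and checking that the edge-crossing isometry exchanges exactly the "missing" semicircles. I expect this semicircle bookkeeping, rather than any analytic difficulty, to be the only genuine obstacle; everything else reduces to the connectedness of $M$ and the length-preserving property of the edge-crossing map already established.
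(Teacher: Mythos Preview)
Your proposal is correct and matches the paper's intended argument: the paper simply declares the lemma ``follows immediately from the connectedness of $M$ and the Landsberg condition,'' and what you have written is a careful expansion of that claim, using the Hessian isometry between full indicatrices already noted after Definition~\ref{define-landsberg-berwald} for part~(a), and the semicircle decomposition plus the edge-crossing isometry for part~(b). The only remark is that your appeal to Proposition~\ref{lemma-5} at the end is unnecessary: the same semicircle argument applied with the roles of incoming and outgoing swapped (using the isometry $\mathcal{S}_{x,2}^-M\to\mathcal{S}_{x,1}^+M$) gives $l_x^-(\mathcal{S}_x^-M)=\Theta$ directly, keeping the proof self-contained.
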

 In the following, we will denote the above constant as $\theta_M$.

Our goal in this section is to prove a combinatoric
Gauss-Bonnet formula.

\begin{theorem}\label{main-thm-4}
Let $M$ be a compact connected piecewise flat Landsberg surface (without boundary).
Then
\begin{equation}\label{gauss-bonnet-formula}
\sum_{x\in M} K(x)=\theta_M\chi(M),
\end{equation}
where $\theta_M$ is the constant in Lemma \ref{lemma-theta-M} and
$\chi(M)$ is the Euler characteristic number.
\end{theorem}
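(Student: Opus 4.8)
The plan is to reduce the global formula to a pointwise angle-defect identity at each vertex and then carry out a Finslerian version of the classical combinatorial Gauss--Bonnet count. The central step is to prove that at every combinatorial vertex $x$,
$$K(x)=\theta_M-l_x^+(\mathcal{S}_x^+M),$$
where $l_x^+(\mathcal{S}_x^+M)=\sum_i\alpha_{x,i}$ is the total cone angle at $x$, the sum of the Hessian-arc-length angles $\alpha_{x,i}=l(\mathcal{S}_{x,i}^+M)$ of the cones $C_{x,i}$. By Theorem \ref{main-thm-3} the curvature is independent of the chosen direction, so it suffices to verify, for one incoming direction $v^-$, that the two extreme perturbations of Definition \ref{def-curvature-piecewise-flat-surface} satisfy $\sphericalangle_x^+(c_{+,l})+\sphericalangle_x^+(c_{+,r})=\theta_M$; substituting into the definition of $K(x,v^-)$ then yields the displayed identity. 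For a non-vertex this reads $0=\theta_M-\theta_M$, matching Lemma \ref{lemma-theta-M}.

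To establish the sweep identity I would use the structure of the perturbations from Theorem \ref{lemma-geodesic-not-passing-vertex-x}: each of $c_{+,l},c_{+,r}$ is a ray, finitely many segments, and a ray, crossing a consecutive block of cones, straight inside each cone, while across each edge the Landsberg condition (Definition \ref{define-landsberg-berwald}) preserves indicatrix arc length. Following the two perturbations away from the common incoming direction in opposite rotational senses and adding the arcs their projections $\mathrm{Pr}_x^\pm$ sweep, a turning computation should give one full reference turn $\theta_M$. This is transparent in the Berwald case, where the traversed cones develop isometrically into a single Minkowski plane, each perturbation becomes a straight line sweeping a half-turn of the reference indicatrix, and the two complementary half-turns add to $\theta_M$. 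I expect this to be the main obstacle: in the genuinely non-Berwald Landsberg setting there is no global linear developing map, so the fact that one full turn of the indicatrix has length $\theta_M$ must be extracted from arc-length preservation of the edge-crossing maps together with the direction-independence of Theorem \ref{main-thm-3}, rather than from straight-line geometry.

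Granting the angle-defect identity, I would finish by a double count. Summing over the $V$ combinatorial vertices and regrouping cone angles by the triangle in which each cone is a corner,
$$\sum_x K(x)=\theta_M V-\sum_x l_x^+(\mathcal{S}_x^+M)=\theta_M V-\sum_T\big(\alpha_{T,A}+\alpha_{T,B}+\alpha_{T,C}\big).$$
For a single Minkowski triangle $T$, orienting its boundary with interior on the left, the outgoing tangent direction makes one counterclockwise loop of the indicatrix, so the three exterior (turning) angles sum to $\theta_M$; combined with the vertexwise relation $\alpha+\varepsilon=\delta_T(\vec e)$, where $\delta_T(\vec e)$ is the arc from an incoming edge direction $\vec e$ to its reverse $-\vec e$ taken through the interior side, this gives
$$\alpha_{T,A}+\alpha_{T,B}+\alpha_{T,C}=\sum_{\vec e\in\partial T}\delta_T(\vec e)-\theta_M.$$

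It then remains to glue along edges. Each edge $e$ belongs to exactly two triangles, which, with the interior-on-the-left convention, traverse it in opposite senses; at an interior point of $e$ the indicatrix is the union of the two half-plane indicatrices meeting at $\pm\vec e$, so $\delta_{T_1}(\vec e)$ and $\delta_{T_2}(-\vec e)$ are complementary arcs summing to $\theta_M$. Hence $\sum_T\sum_{\vec e\in\partial T}\delta_T(\vec e)=\theta_M E$ and $\sum_T(\alpha_{T,A}+\alpha_{T,B}+\alpha_{T,C})=\theta_M E-\theta_M F$, so
$$\sum_x K(x)=\theta_M V-\theta_M E+\theta_M F=\theta_M(V-E+F)=\theta_M\chi(M).$$
The count needs no global orientation, since the interior-on-the-left orientation is chosen triangle by triangle and the complementarity of the $\delta$'s is local; the Landsberg hypothesis enters only through Lemma \ref{lemma-theta-M}, which makes $\theta_M$ a single constant so that every triangle's indicatrix has the same total length, and through the arc-length preservation used in the angle-defect step.
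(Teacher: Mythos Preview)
Your proposal is correct, and the overall architecture (angle-defect identity at vertices, then a combinatorial count) matches the paper's. The execution differs in two places worth noting.

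First, you go for the \emph{pointwise} identity $K(x)=\theta_M-l_x^+(\mathcal{S}_x^+M)$, i.e.\ $\sphericalangle_x^+(c_{+,l})+\sphericalangle_x^+(c_{+,r})=\theta_M$. The paper never isolates this; it instead proves a \emph{pairwise} version (its Lemma~\ref{lemma-4}): for adjacent vertices $z_1,z_2$ one has $K(z_1)+K(z_2)=2\theta_M-l_{z_1}^\pm(\mathcal{S}_{z_1}^\pm M)-l_{z_2}^\pm(\mathcal{S}_{z_2}^\pm M)$. This is obtained by choosing the perturbation geodesics to be two parallels of the shared edge, one in each adjacent triangle, and applying the reduction formula (Lemma~\ref{lemma-3}) $\sphericalangle_x^+(c)=\angle_{x_1}^+(-u_1^-,v_1)+\angle_{x_1}^-(u_1^-,v_1)$, which packages exactly the Landsberg arc-length preservation you invoke. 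Your pointwise identity does follow from theirs (pair $x$ with a fake vertex on an adjacent edge, where $K=0$ and $l=\theta_M$), but the paper avoids stating it by instead subdividing so that the vertex set can be listed in adjacent pairs and then summing Lemma~\ref{lemma-4}. What your route buys is a cleaner statement and no need for the pairing/subdivision trick; what the paper's route buys is that Lemma~\ref{lemma-4} is a single explicit computation, with no separate ``sweep $=\theta_M$'' step to justify.

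Second, your double count is a turning-angle argument ($\sum\varepsilon=\theta_M$ in each Minkowski plane, then $\alpha+\varepsilon=\delta_T(\vec e)$ and edge-complementarity $\delta_{T_1}(\vec e)+\delta_{T_2}(-\vec e)=\theta_M$). The paper instead uses the six-angle identity (its (\ref{0006})): in each Minkowski triangle the three outgoing plus three incoming corner angles sum to $\theta_M$, together with Proposition~\ref{lemma-5} ($l_x^+=l_x^-$). Both routes give $\sum_x l_x^+(\mathcal{S}_x^+M)=\tfrac12\theta_M F$ and hence $\sum K(x)=\theta_M(V-\tfrac12F)=\theta_M\chi(M)$. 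Your version is pleasantly orientation-local; the paper's is slightly shorter since (\ref{0006}) already does the per-triangle bookkeeping in one line.
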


To prove Theorem \ref{main-thm-4}, we first make some
observations.

\medskip
\noindent (1)\quad One can  add more vertices and edges for the triangulation of $M$. In fact, the curvature at a new vertex $x$ is 0. The curvature at an old vertex $x$ will not be changed by adding more edges associated with it. So the left side of (\ref{gauss-bonnet-formula}) is not changed by this procedure.

\medskip
\noindent(2)\quad One can suitably add some vertices and edges such that the following conditions are satisfied:
\begin{description}
\item{\rm (a)} The total number of the vertices is even.
\item{\rm (b)} The vertices can be listed as $\{z_i,i=1,\ldots,2m\}$
such that for each $1\leq i\leq m$, there exists an edge between $z_{2i-1}$
and $z_{2i}$.
\end{description}
To see this, we consider a vertex $z_1$ in each step. Assume there exists
a Minkowski triangle in $M$ with vertices $z_1$, $z_2$ and $z_3$. We then add
a vertex $z_4$ at the geometric center of the this triangle, and add three edges connecting $z_4$ to $z_1$, $z_2$ and $z_3$, respectively.  Pair $z_1$ with $z_4$. Then we continue to consider other vertices. In each step, the number of vertices which has not been paired decrease by 1. So after finite steps, all the vertices are paired, and they can be listed as in (2) such that for each $i$, $z_{2i-1}$ and $z_{2i}$ have been paired together.

\medskip
\noindent(3)\quad Consider a geodesic $\{c(t),t\in(-\infty,\infty)\}$ in $T_xM$ which does not pass the vertex $x$. Assume this geodesic can be presented as
$$R_{x_1,u_1^-}\cup(\bigcup_{i=1}^{m-1} L_{x_i,x_{i+1}})\cup R_{x_m,u_m^+},$$
where each $x_i\neq x$ is on the edge $E_i=C_{x,i-1}\cap C_{x,i}$, and the rays
and line segments are contained in the Minkowski cones $C_{x,i}$, $i=0,\ldots,m$,
respectively. Denote $u_i^\pm$ with $u_i^+=u_{i+1}^-$ the unit outgoing or incoming tangent vector
of $c(t)$ at each $x_i$, and $v_i$ the unit tangent vector in the direction from $x$ to $x_i$.
Then we have
\begin{lemma}\label{lemma-3}
$\mathrm{1)}$ $\sphericalangle_x^+(\{c(t),t\in(-\infty,\infty)\})
=\angle_{x_1}^+(-u_1^-,v_1)
+\angle_{x_1}^-(u_1^-,v_1)$.

$\mathrm{2)}$ $\sphericalangle_x^-(\{c(t),t\in(-\infty,\infty)\})
=\angle_{x_m}^-(-u_m^+,-v_m)
+\angle_{x_m}^+(u_m^+,-v_m)$.
\end{lemma}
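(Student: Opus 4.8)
The plan is to split the outgoing‑projection sweep $\sphericalangle_x^+(\{c(t),t\in(-\infty,\infty)\})$ at the instant $t=t_1$ where $c(t_1)=x_1$, and to match the two resulting pieces with the two angles at $x_1$ on the right of $\mathrm{1)}$. Throughout I would use two standing facts: that $\mathrm{Pr}_x^+$ applied to the whole geodesic rotates monotonously (as in Lemma \ref{lemma-monotonous}), so the arcs below genuinely add up with a common orientation; and that inside a single Minkowski cone $C_{x,i}$ the norm $F_i$ is translation invariant, hence the $l_x^+$‑angle between two fixed directions does not depend on the base point at which it is measured.

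For the backward piece $R_{x_1,u_1^-}$ (the part $t\le t_1$): as $t$ runs from $-\infty$ to $t_1$ the projected direction $\mathrm{Pr}_x^+(c(t))$ moves from the asymptotic direction $-u_1^-$ to $v_1$, staying inside $C_{x,0}$. Thus this piece contributes $\angle_x^+(-u_1^-,v_1)$, and by translation invariance of $F_0$ this equals $\angle_{x_1}^+(-u_1^-,v_1)$, the first term of $\mathrm{1)}$. This step uses only the flatness of $C_{x,0}$ and no further hypothesis.

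For the forward piece (the part $t\ge t_1$) I would decompose it cone by cone. The segment $L_{x_i,x_{i+1}}$ lies in $C_{x,i}$ and joins $E_i$ to $E_{i+1}$, so its projection sweeps exactly the cone angle $\alpha_i=\angle_x^+(v_i,v_{i+1})$, while the final ray $R_{x_m,u_m^+}$ sweeps $\angle_x^+(v_m,u_m^+)$; hence the forward piece contributes $\sum_{i=1}^{m-1}\alpha_i+\angle_x^+(v_m,u_m^+)$. Using translation invariance inside each $C_{x,i}$ (the subtended angle of a segment is the difference of the subtended angles of the two rays extending it) I rewrite $\alpha_i=\angle_{x_i}^+(v_i,u_i^+)-\angle_{x_{i+1}}^-(u_{i+1}^-,v_{i+1})$, and express the final ray through $\angle_{x_m}^+(v_m,u_m^+)$. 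The sum then telescopes to $\angle_{x_1}^+(v_1,u_1^+)+\sum_{j=2}^{m}\bigl[\angle_{x_j}^+(v_j,u_j^+)-\angle_{x_j}^-(u_j^-,v_j)\bigr]$. Here the Landsberg hypothesis (the ambient setting of Section 7) enters decisively: the edge‑crossing map at $E_j$ is an isometry of indicatrices that fixes the edge direction $v_j$ (a geodesic along an edge does not bend, so $v_j\mapsto v_j$ under \eqref{geodesic-equation-crossing-edge}) and sends $u_j^-$ to $u_j^+$; being an orientation‑preserving isometry fixing $v_j$, it carries the arc from $v_j$ to $u_j^-$ onto the arc from $v_j$ to $u_j^+$ with equal $l^{\pm}$‑length, so every interior bracket vanishes. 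Applying the same isometry at $E_1$ converts the surviving term $\angle_{x_1}^+(v_1,u_1^+)$ into $\angle_{x_1}^-(u_1^-,v_1)$, the second term of $\mathrm{1)}$. Adding the two pieces gives $\mathrm{1)}$, and $\mathrm{2)}$ follows by the symmetric argument, splitting the incoming‑projection sweep $\sphericalangle_x^-$ at $t=t_m$ and reading off the angles at $x_m$.

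The step I expect to be the genuine obstacle is the telescoping of the forward piece: one must make the orientation conventions uniform so that the cone‑angle sweeps and the subtracted ray‑angles all carry consistent signs, and one must verify that the Landsberg edge‑crossing isometry is orientation preserving and fixes $v_j$, since this is precisely what forces the two arcs emanating from $v_j$ to be equally long and makes the interior terms cancel. Once this bookkeeping is in place, the remaining ingredients are only translation invariance within a flat cone and direct inspection of the projected curve.
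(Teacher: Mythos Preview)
Your proposal is correct and follows essentially the same route as the paper: split the sweep at $x_1$, handle the backward ray by translation invariance in $C_{x,0}$, and reduce the forward piece via the exterior-angle identity $\angle_{x_i}^+(u_i^+,v_i)=\angle_x^+(v_i,v_{i+1})+\angle_{x_{i+1}}^-(u_{i+1}^-,v_{i+1})$ in each flat cone together with the Landsberg identity $\angle_{x_j}^-(u_j^-,v_j)=\angle_{x_j}^+(v_j,u_j^+)$ at each edge. The paper runs this as a forward chain starting from $\angle_{x_1}^-(u_1^-,v_1)$ rather than as your telescoping sum, but the two computations are reorderings of one another.
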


\begin{proof}
1) By the Landsberg condition, we have
\begin{eqnarray*}
\angle_{x_1}^-(u_1^-,v_1)&=&
\angle_{x_1}^+(u_1^+,v_1)\\
&=&\angle_{x}^+(v_1,v_2)+\angle_{x_2}^-(u_2^-,v_2)\\
&=&\angle_{x}^+(v_1,v_2)+\angle_{x_2}^+(u_2^+,v_2)\\
&=&\angle_{x}^+(v_1,v_2)+\angle_x^+(v_2,v_3)+
\angle_{x_3}^-(u_3^-,v_3)\\
&=&\cdots\\
&=&\sum_{i=1}^{m-1}\angle_x^+
(v_i,v_{i+1})+\angle_{x_m}^+(v_m,u_m^+).
\end{eqnarray*}
Thus
\begin{eqnarray*}
\sphericalangle_x^+(\{c(t),t\in(-\infty,\infty)\})&=&
\angle_{x_1}^+(-u_1^-,v_1)+
\sum_{i=1}^{m-1}\angle_x^+(v_i,v_{i+1})+
\angle_{x_m}^+(v_m,u_m^+)\\
&=&
\angle_{x_1}^+(-u_1^-,v_1)+\angle_{x_1}^-(u_1^-,v_1).
\end{eqnarray*}

2) The proof is similar.
\end{proof}

\medskip
\noindent(4)\quad
In each Minkowski triangle with vertices $z_1$, $z_2$ and $z_3$,
we have
\begin{eqnarray}\label{0006}
\theta_M&=&\sphericalangle_{z_{1}}^+(L_{z_{2},z_{3}})
+
\sphericalangle_{z_{1}}^-(L_{z_{2},z_{3}})+
\sphericalangle_{z_{2}}^+(L_{z_{1},z_{3}})+
\sphericalangle_{z_{2}}^-(L_{z_{1},z_{3}})\nonumber\\
& &+
\sphericalangle_{z_{3}}^+(L_{z_{1},z_{2}})+
\sphericalangle_{z_{3}}^-(L_{z_{1},z_{2}})\nonumber\\
&=&
\angle_{z_1}^+(v_{z_1,z_2},v_{z_1,z_3})
+\angle_{z_1}^-(v_{z_2,z_1},v_{z_3,z_1})+
\angle_{z_2}^+(v_{z_2,z_1},v_{z_2,z_3})\nonumber\\
& &+\angle_{z_2}^-(v_{z_1,z_2},v_{z_3,z_2})
+
\angle_{z_3}^+(v_{z_3,z_1},v_{z_3,z_2})+
\angle_{z_3}^-(v_{z_1,z_3},v_{z_2,z_3}).
\end{eqnarray}
Moreover, for any $z_4\notin L_{z_2,z_1}$ on the ray extending $L_{z_2,z_1}$, we have
\begin{equation}\label{0007}
\angle_{z_1}^-(v_{z_4,z_1},v_{z_3,z_1})
=\angle_{z_2}^-(v_{z_1,z_2},v_{z_3,z_2})
+\angle_{z_3}^+(v_{z_3,z_1},v_{z_3,z_2}),
\end{equation}
and
\begin{equation}\label{0008}
\angle_{z_1}^+(v_{z_1,z_4},v_{z_1,z_3})=
\angle_{z_2}^+(v_{z_2,z_1},v_{z_2,z_3})
+\angle_{z_3}^-(v_{z_1,z_3},v_{z_2,z_3}).
\end{equation}
These obvious facts are natural generalizations of fundamental facts in classical geometry, and we will omit the proof.

\medskip
\noindent(5)\quad Consider two Minkowski triangles $(T_1,F_1)$ and $(T_2,F_2)$
in $M$ whose vertices are $\{z_1,z_2,z_3\}$ and $\{z_1,z_2,z_4\}$, respectively. Denote the curvature at $z_1$ and $z_2$ as $K(z_1)$ and $K(z_2)$, respectively. Then we have
\begin{lemma}\label{lemma-4}
$K(z_1)+K(z_2)=2\theta_M-l_{z_1}^\pm(\mathcal{S}_{z_1}^\pm M)
-l_{z_2}^\pm(\mathcal{S}_{z_2}^\pm M)$.
\end{lemma}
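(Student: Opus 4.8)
The plan is to reduce the identity to the Minkowski-triangle angle-sum formula (\ref{0006}) applied to both $T_1$ and $T_2$. Since $M$ is of Landsberg type, Theorem \ref{main-thm-3} allows me to compute $K(z_1)$ and $K(z_2)$ using whichever incoming direction is convenient, and the natural choice is the direction of the common edge $L_{z_1,z_2}$. Concretely, at $z_1$ I would take the unit speed geodesic ray $\{c(t),t\in(-\infty,0]\}$ with $c(0)=z_1$ running along the edge from $z_2$ toward $z_1$, so that $-\dot{c}^-(0)=v_{z_1,z_2}$, and at $z_2$ the analogous ray coming from $z_1$. By Proposition \ref{lemma-5} we have $l_{z_i}^+(\mathcal{S}_{z_i}^+M)=l_{z_i}^-(\mathcal{S}_{z_i}^-M)=l_{z_i}^\pm(\mathcal{S}_{z_i}^\pm M)$, so Definition \ref{def-curvature-piecewise-flat-surface} (via (\ref{define-curvature-1})) gives
$$K(z_i)+l_{z_i}^\pm(\mathcal{S}_{z_i}^\pm M)=\sphericalangle_{z_i}^+(\{c_{+,l}(t)\})+\sphericalangle_{z_i}^+(\{c_{+,r}(t)\}),\quad i=1,2.$$
Adding the two identities, the lemma becomes equivalent to the assertion that the four swept angles on the right-hand side sum to $2\theta_M$.

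The next step is to evaluate each of the four swept angles. For the edge-direction ray at $z_1$, the two perturbations $c_{+,l}$ and $c_{+,r}$ are geodesics grazing $z_1$ on the $T_1$-side and on the $T_2$-side respectively; by Lemma \ref{lemma-monotonous}(2) their swept angles are the limits of the swept angles of genuine non-grazing perturbations. In this limit the incoming ray of each perturbation hugs the edge $L_{z_1,z_2}$, so its first edge-crossing tends to $z_1$ along the edge $z_1z_3$ (for $c_{+,l}$) or along $z_1z_4$ (for $c_{+,r}$). Applying Lemma \ref{lemma-3}(1) to each and passing to the limit, $\sphericalangle_{z_1}^+(\{c_{+,l}\})$ and $\sphericalangle_{z_1}^+(\{c_{+,r}\})$ become the sum of an outgoing interior angle of $T_1$ (resp.\ $T_2$) at $z_1$ and an incoming angle measured against the edge-extended direction $-v_{z_1,z_2}$; the analogous statement holds at $z_2$ with the edges $z_2z_3$ and $z_2z_4$. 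The Landsberg telescoping built into the proof of Lemma \ref{lemma-3} is exactly what collapses the contributions of the intermediate cones into these boundary angles.

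Finally I would collect the four limiting expressions and reorganize them. The incoming pieces, which involve the edge-extended direction, can be rewritten by the exterior-angle relations (\ref{0007}) and (\ref{0008}) as interior angles at the opposite vertices ($z_2,z_3$ on the $T_1$-side and $z_2,z_4$ on the $T_2$-side), while $l^+=l^-$ from Proposition \ref{lemma-5} is used to interchange incoming and outgoing arc lengths where needed. The outcome should be precisely the six-term angle sum (\ref{0006}) for $T_1$ together with the six-term sum for $T_2$. Since each of those equals $\theta_M$, their total is $2\theta_M$, which yields $K(z_1)+K(z_2)=2\theta_M-l_{z_1}^\pm(\mathcal{S}_{z_1}^\pm M)-l_{z_2}^\pm(\mathcal{S}_{z_2}^\pm M)$.

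The main obstacle is the bookkeeping in the last two steps: with a fixed local orientation on $T_xM$ one must carefully track the $\pm$ (incoming/outgoing) labels and verify that the outgoing and incoming pieces of the four swept angles account for \emph{exactly} the twelve interior angles entering two copies of (\ref{0006}), with none omitted or double counted. A telling sign of this delicacy is that the edge-extended direction appears as $v_{z_4,z_1}$ in (\ref{0007}) with the opposite orientation to the naive incoming travel vector, so the correct identification of directions is essential. The other technical point is making the degenerate limit of Lemma \ref{lemma-3}(1) rigorous, i.e.\ establishing continuity of the first-crossing angles as the crossing point tends to the vertex; this is supplied by the convergence statements in Lemmas \ref{lemma-monotonous} and \ref{lemma-monotonous-2}. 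Once the angle-matching is pinned down, the conclusion is immediate from (\ref{0006}).
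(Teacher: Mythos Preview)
Your plan is essentially the paper's: compute both curvatures using the common edge direction, express the swept angles via Lemma~\ref{lemma-3}, and reorganize with (\ref{0007})--(\ref{0008}) into the two copies of (\ref{0006}). One simplification you are missing: the degenerate limit you worry about is unnecessary. The parallel-shift perturbations $c_{+,l},c_{+,r}$ are themselves geodesics in $T_{z_1}M$ not passing $z_1$, so Lemma~\ref{lemma-3}(1) applies to them directly; for any fixed nonzero shift into $T_1$ the first crossing is a definite point $x_2$ on the open ray along $L_{z_1,z_3}$ (not the vertex), and the resulting two-angle expression is already independent of the shift size. The paper also chooses to compute $K(z_2)$ via the \emph{outgoing} formula (\ref{define-curvature-2}) rather than a second incoming one; then the same two parallel geodesics $c_1\subset T_1$ and $c_2\subset T_2$ serve both vertices, their first crossings $x_2,x_4$ feeding Lemma~\ref{lemma-3}(1) for $z_1$ and their last crossings $x_1,x_3$ feeding Lemma~\ref{lemma-3}(2) for $z_2$. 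After that the bookkeeping you anticipate is exactly what remains, and (\ref{0006}) applied to $T_1$ and $T_2$ yields $2\theta_M$.
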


Notice that by Proposition \ref{lemma-5},
at each vertex $z_i$, we have $l_{z_i}^+(\mathcal{S}_{z_i}^+M)=l_{z_i}^-(\mathcal{S}_{z_i}^-M)$.

\begin{proof}
 Fix two geodesics $c_1(t)$ and $c_2(t)$  in $T_1$ and $T_2$  which are sufficiently close and parallel to the edge $L_{z_1,z_2}$, respectively. Assume the geodesic $c_1(t)$ intersects with $L_{z_2,z_3}$ at $x_1$, and with $L_{z_1,z_3}$ at $x_2$. Suppose  the geodesic $c_2(t)$ intersects with $L_{z_2,z_4}$ at $x_3$, and with $L_{z_1,z_4}$ at $x_4$. Moreover, we assume   that the direction of $c_1(t)$ is from $x_1$ to $x_2$, and that of $c_2(t)$ is from $x_3$ to $x_4$.
Then
by Theorem \ref{main-thm-3}, Lemma \ref{lemma-3}, and (\ref{0006})-(\ref{0008}), we can get
\begin{eqnarray*}
& &K(z_1)+K(z_2)\\
&=&
\angle_{x_1}^-(v_{z_3,x_1},v_{x_2,x_1}) +
\angle_{x_1}^+(v_{x_1,z_2},v_{x_1,x_2}) +
\angle_{x_3}^-(v_{z_4,x_3},v_{x_4,x_3}) +
\angle_{x_3}^+(v_{x_3,z_2},v_{x_3,x_4}) \\
& &-l_{z_2}^-(\mathcal{S}_{z_2}^- M)\\
& &\angle_{x_2}^+(v_{x_2,z_3},v_{x_2,x_1}) +
\angle_{x_2}^-(v_{z_1,x_2},v_{x_1,x_2}) +
\angle_{x_4}^+(v_{x_4,z_4},v_{x_4,x_3}) +
\angle_{x_4}^-(v_{z_1,x_4},v_{x_3,x_4}) \\
& &-l_{z_1}^+(\mathcal{S}_{z_1}^+ M) \\
&=&
\angle_{x_1}^-(v_{z_3,x_1},v_{x_2,x_1}) +
(\angle_{x_2}^-(v_{z_3,x_2},v_{x_1,x_2}) +
\angle_{z_3}^+(v_{z_3,x_1},v_{z_3,x_2})) +
\angle_{x_3}^-(v_{z_4,x_3},v_{x_4,x_3})\\
& &+
(\angle_{x_4}^-(v_{z_4,x_4},v_{x_3,x_4}) +
\angle_{z_4}^+(v_{z_4,x_3},v_{z_4,x_4})) +
\angle_{x_2}^+(v_{x_2,z_3},v_{x_2,x_1}) +
(\angle_{x_1}^+(v_{x_1,z_3},v_{x_1,x_2})\\
& &+\angle_{z_3}^-(v_{x_2,z_3},v_{x_1,z_3}))+
\angle_{x_4}^+(v_{x_4,z_4},v_{x_4,x_3}) +
(\angle_{x_3}^+(v_{x_3,z_4},v_{x_3,x_4})+
\angle_{z_4}^-(v_{z_4,x_4},v_{z_4,x_3}))\\
& &-l_{z_1}^\pm(\mathcal{S}_{z_1}^\pm M) -
l_{z_2}^{\pm}(\mathcal{S}_{z_2}^\pm M) \\
&=&
(\angle_{z_1}^+(v_{z_1,z_3},v_{z_1,z_2}) +
\angle_{z_1}^-(v_{z_3,z_1},v_{z_2,z_1}) +
\angle_{z_2}^+(v_{z_2,z_3},v_{z_2,z_1}) +
\angle_{z_2}^-(v_{z_3,z_2},v_{z_1,z_2})\\
& & +
\angle_{z_3}^+(v_{z_3,z_2},v_{z_3,z_1}) +
\angle_{z_3}^-(v_{z_1,z_3},v_{z_2,z_3})) +
(\angle_{z_1}^+(v_{z_1,z_4},v_{z_1,z_2}) +
\angle_{z_1}^-(v_{z_4,z_1},v_{z_2,z_1}) \\
& &+
\angle_{z_2}^+(v_{z_2,z_4},v_{z_2,z_1}) +
\angle_{z_2}^-(v_{z_4,z_2},v_{z_1,z_2}) +
\angle_{z_4}^+(v_{z_4,z_2},v_{z_4,z_1}) +
\angle_{z_4}^-(v_{z_4,z_1},v_{z_4,z_2})) \\
& &-l_{z_1}^\pm(\mathcal{S}_{z_1}^\pm M) -
l_{z_2}^{\pm}(\mathcal{S}_{z_2}^\pm M) \\
&=&
2\theta_M-l_{z_1}^\pm(\mathcal{S}_{z_1}^\pm M) -
l_{z_2}^{\pm}(\mathcal{S}_{z_2}^\pm M)
\end{eqnarray*}

This completes the proof of the lemma.
\end{proof}

\medskip
\noindent\textbf{Proof of Theorem \ref{main-thm-4}}\quad
If necessary, we add more vertices and edges such that the set of all different vertices in $M$
can be listed as $\{z_1,z_2,\ldots,z_{2m-1},z_{2m}\}$, such that for $1\leq i\leq m$, $L_{z_{2i-1},z_{2i}}$ is an edge. We also list all the Minkowski triangles in $M$ as $\{T_1,\ldots,T_n\}$ and denote the vertices of  $T_j$ as $z_{j,1}$, $z_{j,2}$ and $z_{j,3}$. It is easily known that the total number of the  edges is $3n/2$, which implies that $n$ is an even number, and $\chi(M)=2m-\frac12n$.

By Lemma \ref{lemma-4}, for each $1\leq i\leq m$, we have
\begin{equation}\label{0009}
K(z_{2i-1})+K(z_{2i})=
2\theta_M-l_{z_{2i-1}}^\pm(\mathcal{S}_{z_{2i-1}}^\pm M)
-l_{z_{2i}}^\pm(\mathcal{S}_{z_{2i}}^\pm M).
\end{equation}
Taking the summation  of (\ref{0009}) for all $i$, we get
\begin{eqnarray*}
\sum_{x\in M}K(x)&=&
2m\theta_M-\sum_{i=1}^{2m}l_{z_i}^\pm(\mathcal{S}_{z_i}^\pm M)\\
&=&2m\theta_M-\frac12\sum_{i=1}^{2m}
(l_{z_i}^+(\mathcal{S}_{z_i}^+M)+
l_{z_i}^-(\mathcal{S}_{z_i}^-M))
\end{eqnarray*}
The angles in $\sum_{i=1}^{2m}
(l_{z_i}^+(\mathcal{S}_{z_i}^+M)+
l_{z_i}^-(\mathcal{S}_{z_i}^-M))$ can be calculated triangle by triangle, i.e.
\begin{eqnarray*}
\sum_{x\in M}K(x)
&=&2m\theta_M-\frac12\sum_{j=1}^n
(\sphericalangle_{z_{j,1}}^+(L_{z_{j,2},z_{j,3}})+
\sphericalangle_{z_{j,1}}^-(L_{z_{j,2},z_{j,3}})+
\sphericalangle_{z_{j,2}}^+(L_{z_{j,1},z_{j,3}})\\ & &+
\sphericalangle_{z_{j,2}}^-(L_{z_{j,1},z_{j,3}})+
\sphericalangle_{z_{j,3}}^+(L_{z_{j,1},z_{j,2}})+
\sphericalangle_{z_{j,3}}^-(L_{z_{j,1},z_{j,2}})\\
&=&\theta_M(2m-\frac12 n)=\theta_M\chi(M).
\end{eqnarray*}
This completes the proof of Theorem \ref{main-thm-4}.

{\bf Acknowledgement.} We are grateful to  the referee for  valuable comments and suggestions. We are indebted to Professor  Fuquan Fang for providing us with  some references about piecewise flat geometry,   and Huibin Chang for some inspiring discussions.

\end{document}